\RequirePackage[l2tabu, orthodox]{nag}
\documentclass[a4paper,12pt]{amsart}
\usepackage[all, warning]{onlyamsmath}


\usepackage[margin=1.2in,marginparwidth=2cm]{geometry}

\usepackage{mathtools,amssymb}
\usepackage{amsthm}
\usepackage{mleftright}

\usepackage[T1]{fontenc}
\usepackage[sc,noBBpl]{mathpazo}
\usepackage[scaled]{helvet}
\usepackage{bm}
\usepackage{eucal}

\usepackage{graphicx}
\usepackage{tikz-cd}
\usepackage{xcolor}

\usepackage{prettyref}
\PassOptionsToPackage{hyphens}{url}
\usepackage[hidelinks,pdfusetitle,bookmarksnumbered,final]{hyperref}


\usepackage{mainstyle}

\newenvironment{DIFnomarkup}{}{}

\begin{document}

\title[On the effective generation of direct images in mixed characteristic]{On the effective generation of direct images of pluricanonical bundles in mixed characteristic}
\author{Hirotaka Onuki}
\date{\today}

\begin{abstract}
We present an effective global generation result for direct images of pluricanonical bundles in mixed characteristic. This is a mixed characteristic analog of Ejiri's theorem \cite{Ejir24} in positive characteristic and the theorem of Popa and Schnell \cite{PoSc14} regarding their Fujita-type conjecture in characteristic zero. As an application, we establish a weak positivity statement for the relative canonical sheaf of a smooth morphism in mixed characteristic.
\end{abstract}

\maketitle

\tableofcontents

\section{Introduction}
A special case of Fujita's freeness conjecture states that for a globally generated ample line bundle $\sL$ on an $n$-dimensional nonsingular projective variety $X$ over $\CC$, the adjoint bundle
\[
    \sOm_X \otimes \sL^l
\]
is globally generated for all $l \ge n+1$.
While Fujita's freeness conjecture itself is still unsolved, this special case follows from an application of the Castelnuovo-Mumford regularity and the Kodaira vanishing theorem.
An analogous result was shown in positive characteristic by using Serre vanishing and the Frobenius morphism instead of Kodaira vanishing (see \cite{Smit97, HarN:2003, Keer08}).
Very recently, a mixed characteristic analog was established in \cite{BMP+23, HLS21}, which relies on the recent development of commutative algebra in mixed characteristic.

Popa and Schnell \cite{PoSc14} generalized the above-stated case of Fujita's freeness conjecture to the relative setting: for a surjection $f \colon X \to Y$ between nonsingular projective varieties over $\CC$ with $\dim Y = n$ and a globally generated ample line bundle $\sL$ on $Y$, the direct image
\[
    f_*\sOm_X^m \otimes \sL^l
\]
of the pluricanonical bundle is globally generated for all $m \ge 1$ and $l \ge m(n + 1)$.
More generally, they proved the global generation of
\[
    f_*\sO_X(m(\dK_X + \Delta)) \otimes \sL^l
\]
for a log canonical pair $(X, \Delta)$.

On the other hand, an analogous statement in positive characteristic has a counterexample with $m = 1$: for the fibration $g \colon X \to \scPP^1$ of Moret-Bailly \cite{More81} over an algebraically closed field of characteristic $p > 0$, the sheaf
\[
    g_* \sOm_X \otimes \sO_{\scPP^1}(2) \isom \sO_{\scPP^1}(-1) \oplus \sO_{\scPP^1}(p)
\]
is not globally generated (see also \cite[Theorem 3.16]{ShZh20}).
Therefore, we need to impose some conditions to obtain a global generation result for direct images of pluricanonical bundles in positive characteristic. Ejiri \cite{Ejir24, Ejir23b} established such a result with the assumption that $m$ is sufficiently large and the canonical divisor of $X$ is $f$-ample:
\begin{theorem}[{\cite[Theorem 6.11 (1)]{Ejir24}, \cite[Theorem 3.3]{Ejir23b}}]
Let $k$ be an algebraically closed field of characteristic $p > 0$ and let $f \colon X \to Y$ be a surjection between projective varieties over $k$ with $X$ normal.
Let $\Delta$ be an effective $\QQ$-divisor on $X$ such that $\dK_X + \Delta$ is $\QQ$-Cartier with index $r \ge 1$ not divisible by $p$.
Suppose that $V \subset Y$ is an open set and $U \coloneqq f^\inv(V)$ is the preimage of $V$.
Assume in addition that the following two conditions hold.
\begin{itemize}
\item $(\dK_X + \Delta) \restrict{U}$ is $f \restrict{U}$-ample.
\item $(U, \Delta \restrict{U})$ is $\Fr$-pure. This assumption is satisfied, for example, if $U$ is regular, $\Supp \Delta \restrict{U}$ is SNC, and all the coefficients of $\Delta \restrict{U}$ are at most $1$.
\end{itemize}
Let $\sL$ be a globally generated ample line bundle on $Y$, and set $n \coloneqq \dim Y$.
Then
\[
    f_* \sO_X(m(\dK_X + \Delta)) \otimes \sL^l
\]
is globally generated over $V$ for all sufficiently large multiples $m$ of $r$ and for all $l \ge m(n+1)$ (see \myref{def: gg} for the definition of global generation over an open set).
\end{theorem}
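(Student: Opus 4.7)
The plan is a Castelnuovo--Mumford regularity argument, combined with Frobenius-trace techniques tailored to $F$-pure pairs --- a positive-characteristic substitute for the Kollár-type vanishings that underlie the characteristic zero proof of Popa and Schnell.

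First, by the Castelnuovo--Mumford regularity criterion applied to the globally generated ample $\sL$, to obtain the global generation over $V$ of $f_*\sO_X(m(\dK_X+\Delta))\otimes\sL^l$ for all $l\ge m(n+1)$ it is enough to establish the vanishings
\[
    H^i\bigl(Y,\,f_*\sO_X(m(\dK_X+\Delta))\otimes\sL^{m(n+1)-i}\bigr)=0\qquad(i\ge 1),
\]
at least for the part of the cohomology that detects stalks at points of $V$. The Leray spectral sequence then transports the question onto $X$: using the $f\restrict{U}$-ampleness of $(\dK_X+\Delta)\restrict{U}$, relative Serre vanishing kills $R^j f_*\sO_X(m(\dK_X+\Delta))$ over $V$ for all $j\ge 1$ and $m\gg 0$, so that it suffices to prove
\[
    H^i\bigl(X,\,\sO_X(m(\dK_X+\Delta))\otimes f^*\sL^{m(n+1)-i}\bigr)=0\qquad(i\ge 1)
\]
in a sense compatible with the restriction to $V$.

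The vanishing on $X$ is the heart of the matter and is where the Frobenius enters. Since $p\nmid r$, pick $e\gg 0$ with $r\mid p^e-1$; then $(p^e-1)(\dK_X+\Delta)$ is a genuine Cartier divisor, and Grothendieck duality for Frobenius produces a trace map
\[
    F^e_*\bigl(\sO_X(p^e m(\dK_X+\Delta))\otimes f^*\sL^{p^e(m(n+1)-i)}\bigr)\longrightarrow \sO_X(m(\dK_X+\Delta))\otimes f^*\sL^{m(n+1)-i}.
\]
The $F$-purity hypothesis on $(U,\Delta\restrict{U})$ is precisely the statement that this trace is surjective over $U$. For $e$ sufficiently large, the line bundle inside $F^e_*$ becomes highly positive on $U$ --- here one uses both the ampleness of $\sL$ on $Y$ and the $f\restrict{U}$-ampleness of $(\dK_X+\Delta)\restrict{U}$ --- so its higher cohomology vanishes by absolute and relative Serre vanishing. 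Pushing this vanishing through the surjection yields the desired cohomological vanishing of the target sheaf, localised over $V$, and closes the Castelnuovo--Mumford loop.

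The principal obstacle is keeping track of the localisation to $V$ throughout: the $F$-purity and $f$-ampleness assumptions hold only on $U$, while the vanishings and the generation one wishes to conclude are global statements on $Y$. The cleanest way to organise this is to replace $f_*\sO_X(m(\dK_X+\Delta))$ by a Frobenius-stable subsheaf --- the eventually constant image of the iterated traces, in the spirit of the $S^0$-construction --- prove global generation of this subsheaf by the argument above, and use $F$-purity over $U$ to identify it with the full sheaf there. Arranging a single Frobenius exponent $e$ that simultaneously clears the index $r$, makes the source line bundle positive enough for Serre vanishing, and is compatible with the $F$-pure splitting is the main bookkeeping burden, and it is what forces $m$ to be a sufficiently large multiple of $r$ rather than merely a multiple of $r$.
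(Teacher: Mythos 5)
This statement is quoted from Ejiri's papers and is not reproved here, but the paper's proof of its mixed characteristic analog (\myref{thm: RelFuj}) follows Ejiri's argument step for step and makes clear where your proposal breaks down. The fatal step is trying to extract the effective twist $l \ge m(n+1)$ from a Castelnuovo--Mumford regularity computation backed by Serre vanishing. After applying the trace of $F^e$ you would need
\[
    H^i\left(X,\ \sO_X(p^e m(\dK_X+\Delta)) \otimes f^*\sL^{p^e(m(n+1)-i)}\right)=0\text{,}
\]
but this is not a fixed sheaf twisted by growing powers of a single ample bundle: $f^*\sL$ is not ample on $X$ unless $f$ is finite, and the factor $\sO_X(p^e m(\dK_X+\Delta))$ itself varies with $e$. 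Neither absolute nor relative Serre vanishing applies, and even if some vanishing could be arranged for $l\gg 0$, nothing in that mechanism singles out the exponent $m(n+1)-i$ rather than an enormous $l$ depending on $m$ --- yet the effective constant is the entire content of the theorem. Relatedly, ``the part of the cohomology that detects stalks at points of $V$'' is not a meaningful localization of the CM criterion; the correct substitute (in Ejiri's proof, mirrored by \myref{prop: RelGg-Van}) is to run the Koszul complex of the $n+1$ global generators of $\sL$ against the stable sections and check injectivity of the induced maps on local cohomology, which does restrict correctly to $V$.

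The missing idea is the threshold bootstrap. Set $\epsilon \coloneqq \inf\{t \ge 0 : \text{$\dK_X+\Delta+tf^*L$ is semiample over $U$}\}$, finite by $f\restrict{U}$-ampleness. The global generation theorem for the Frobenius stable direct image (Ejiri's analog of \myref{thm: RelGgOp}) requires the hypothesis that $(m-1)(\dK_X+\Delta)+sf^*L$ be semiample over $U$ and big, and then yields generation of $S^0f_*(\cdots)\otimes\sL^{s+n}$ over $V$; combined with the identification of $S^0f_*$ with $f_*$ over $V$ for $m\gg 0$ with $r \divides m$ --- which is where $F$-purity and relative Serre vanishing genuinely enter, and the only place they are needed --- this gives generation whenever $s>(m-1)\epsilon$. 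Feeding the resulting $f\restrict{U}$-freeness of $m(\dK_X+\Delta)+(s+n)f^*L$ back into the definition of $\epsilon$ yields $\epsilon\le(\lfloor(m-1)\epsilon\rfloor+1+n)/m$, hence $\epsilon\le n+1$, and only then does $s=(m-1)(n+1)+1$, i.e.\ $l\ge m(n+1)$, suffice. Your proposal names the $S^0$-subsheaf as a bookkeeping device but never states the semiampleness-and-bigness hypothesis of its generation theorem and contains no mechanism for bounding $\epsilon$; without that, the bound $m(n+1)$ is not obtained.
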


Our main theorem is the following mixed characteristic analog of Ejiri's theorem.
\begin{main}[see \myref{thm: RelFuj} and \myref{rmk: RelFujSing}]\label{main: main}
Let $(R, \maxmR)$ be a complete DVR of mixed characteristic $(0, p)$ and let $f \colon X \to Y$ be a surjection between integral flat projective $R$-schemes with $X$ normal.
Let $\Delta$ be an effective $\QQ$-divisor on $X$ such that $\dK_X + \Delta$ is $\QQ$-Cartier with index $r \ge 1$.
Suppose that $V \subset Y$ is an open set and $U \coloneqq f^\inv(V)$ is the preimage of $V$.
Assume in addition that the following two conditions hold.
\begin{itemize}
\item $(\dK_X + \Delta) \restrict{U}$ is $f \restrict{U}$-ample.
\item For every $x \in U$, $(\completion{\sO_{X,x}}, \Delta \restrict{\completion{\sO_{X,x}}})$ is $\text{BCM}$-regular (see \myref{def: BCMReg} for the definition) if the residue characteristic of $\sO_{X,x}$ is $p$, and is KLT if it is $0$.
This assumption is satisfied, for example, if $U$ is regular, $\floor{\Delta \restrict{U}} = 0$ and $\Supp \Delta \restrict{U}$ is SNC.
\end{itemize}
Let $\sL$ be a globally generated ample line bundle on $Y$.
Set $n$ to be the dimension of the special fiber $Y_{\maxmR}$ of $Y$.
Then
\[
    f_* \sO_X(m(\dK_X + \Delta)) \otimes \sL^l
\]
is globally generated over $V$ for all sufficiently large multiples $m$ of $r$ and for all $l \ge m(n+1)$.
\end{main}

Here, $R$ is assumed to be a DVR only to simplify the conditions on the singularities of $X$. A similar theorem holds when $R$ is of higher dimension (see \myref{thm: RelFuj} for details).

We also treat the case $m = 1$. We show a global generation result for $f_* \sOm_X \otimes \sL^l$ under mild conditions on the singularities when $R$ is a DVR (see \myref{rmk: gg-alt-dvr} and \myref{cor: gg-alt-tau}).
When $R$ has higher dimension, we obtain a similar result with more restrictive assumptions on the singularities (see \myref{thm: gg-alt}).

\subsection{Applications}
Popa and Schnell \cite{PoSc14} utilized their theorem to reprove Viehweg's weak positivity theorem \cite{Vieh83}: if $f \colon X \to Y$ is a surjection between nonsingular projective varieties over $\CC$, then
\[
    f_*\sOm_{X/Y}^m
\]
is weakly positive for every $m \ge 1$.
Building upon \cite{PoSc14}, Dutta and Murayama \cite{DuMu19} obtained a weak positivity theorem for log canonical pairs.
Patakfalvi \cite{Pat14} and Ejiri (\cite{Ejir17}, \cite[Theorem 6.11 (2)]{Ejir24}) proved a similar result in positive characteristic.

We obtain the following weak positivity result in mixed characteristic by using \myref{main: main}.
\begin{main}[\myref{thm: Wp}]\label{main: wp}
Under the assumption of \myref{main: main}, suppose further that $X$ and $Y$ are regular, $f$ is smooth, $\floor{\Delta \restrict{U}} = 0$, and $\Supp \Delta\restrict{U}$ is relatively SNC over $V$ (see \myref{def: rel-snc} for the definition).
Then $\dK_{X/Y} + \Delta$ is weakly positive over $U$ (see \myref{def: Wp} for the definition).
\end{main}
The smoothness conditions in \myref{main: wp} are imposed to ensure that the fiber products of copies of $X$ over $Y$ also have only mild singularities.

\myref{main: wp} is enough for many applications, although it is weaker than the weak positivity of $f_* \sO_X(m(\dK_{X/Y} + \Delta))$.

As a consequence of \myref{main: wp}, we can show that under the assumption of \myref{main: wp}, the direct image
\[
    f_*\sO_X(m(\dK_{X/Y} + \Delta)) \otimes \sOm_Y \otimes \sL^{n+1}
\]
is globally generated over $V$ (see \myref{prop: Fujino}).
This is a mixed characteristic analog of Fujino's theorem \cite[Theorem 1.7]{Fujn23} in characteristic zero and Ejiri's theorem \cite[Theorem 1.9]{Ejir24} in positive characteristic.

Using \myref{main: wp}, we prove a result on the smooth descent of the positivity of the anticanonical divisor in mixed characteristic, although it can also be shown without \myref{main: wp} (see \myref{prop: Fano} for details).
The equal characteristic case was proven by \cite[Corollary 2.9]{KMM92} (see also \cite[Corollary 3.15]{Debarre}, \cite{FuGo14} and \cite{Ejir19}).

\subsection{Outline of the proof}
We describe the key lemma for the proof of \myref{main: main}.

Hacon, Lamarche, and Schwede \cite{HLS21} defined the $\bmplus$-test ideal
\[
    \sHLS(\sO_X, \Delta) \subset \sO_X\text,
\]
which is a mixed characteristic analog of the test ideal $\tau(\sO_X, \Delta)$ in positive characteristic.
They obtained a useful global generation theorem involving the $\bmplus$-test ideal \cite[Theorem B]{HLS21}.
We give a natural generalization of the $\bmplus$-test ideal to the relative setting: for a line bundle $\sM$ on $X$, we define a subsheaf
\[
    \relBOM{f}{X}{\Delta}{\sM} \subset f_*\mleft(\sHLS\mleft(\sO_X, \Delta\mright) \otimes \sM\mright)\text,
\]
which is a mixed characteristic analog of the Frobenius stable direct image $S^0f_*(\sigma(X, \Delta) \otimes \sM)$ in \cite{HaXu15}.

The key to the proof of \myref{main: main} is the following global generation result for $\relBOM{f}{X}{\Delta}{\sM}$, which generalizes \cite[Theorem B]{HLS21}.
\begin{main}[\myref{thm: RelGgOp}]\label{main: relB0}
Suppose that $f$, $X$, $Y$, $\sL$, and $\Delta$ are as in \myref{main: main}.
Let $\sM = \sO_X(M)$ be a line bundle on $X$.
Suppose that $V' \subset Y$ is an open set, $U' \coloneqq f^\inv(V')$ is the preimage of $V'$, and $M - \dK_X - \Delta$ is semiample over $U'$ and big (see \myref{def: Wp} for the definition of semiampleness over an open set).
Then
\[
    \relBOM{f}{X}{\Delta}{\sM} \otimes \sL^n
\]
is globally generated over $V'$.
\end{main}
\myref{main: relB0} is regarded as a mixed characteristic analog of \cite[Theorem 1.2]{Ejir23a}, which is a global generation theorem for the Frobenius stable direct image.

\begin{acknowledgement}
The author wishes to express his sincere gratitude to his advisor Shunsuke Takagi for suggesting the problems and giving much helpful advice. The author is indebted to Shou Yoshikawa for many insightful suggestions, and to Sho Ejiri for numerous valuable suggestions and for generously sharing his ideas.
The author would like to thank the anonymous referee for a thorough and careful reading of the manuscript.
\end{acknowledgement}

\section{Preliminaries}
Throughout this article, $(R, \maxmR)$ denotes a complete Noetherian local domain of residue characteristic $p > 0$.

\subsection{Duality}
Let $E \coloneqq E_R(R / \maxmR)$ be the injective hull of the residue field. The Matlis duality functor $(\blank) ^ \dual \coloneqq \Hom_R(\blank, E)$ gives the anti-equivalence between the Noetherian $R$-modules and the Artinian $R$-modules \citestacks{08Z9}. Fix a normalized dualizing complex $\cpOm_R$ of $R$. For a quasi-projective $R$-scheme $\pi \colon X \to \Spec R$, define $\cpOm_X \coloneqq \pi^! \cpOm_R$ and $\sOm_X \coloneqq H^{-{\dim X}}(\cpOm_X)$.

For an $R$-scheme $X$ and an $\sO_X$-module $\sF$, write $\fHRRGa{i}{\maxmR}(X, \sF)$ for $H^i \fRR\Gamma_{\maxmR} \fRR\Gamma(X, \sF)$.

\subsection{Absolute integral closures}
Consider a normal integral projective $R$-scheme $X$. Fix an algebraic closure $\algcl{\rK(X)}$ of the function field $\rK(X)$ of $X$. The \emph{absolute integral closure} of $X$, denoted by $\nu \colon X^\abscl \to X$, is the normalization of $X$ in $\algcl{\rK(X)}$. By \emph{limit over all finite covers of $X$}, we mean the limit over all finite surjections $g \colon X' \toepi X$ from a normal integral scheme $X'$ equipped with an inclusion $\rK\mleft(X'\mright) \subset \algcl{\rK(X)}$.

In this paper, we often consider a surjection $f \colon X \to Y$ to an integral projective $R$-scheme $Y$.
In typical situations, we can assume $R = \mH^0(Y, \sO_Y)$ as follows. Set $R' \coloneqq \mH^0(Y, \sO_Y)$. Then $Y \to \Spec R' \to \Spec R$ is the Stein factorization of $Y \to \Spec R$.
Since $R'$ is finite over a complete Noetherian local domain $R$, it is a finite product of complete Noetherian local domains by \citestacks{04GH} and \citestacks{04GM}.
Since $Y$ is integral, it follows that $R'$ itself is a complete Noetherian local domain.
Hence, replacing $R$ by $R'$, we can assume $R = \mH^0(Y, \sO_Y)$.

We quote the following important result.
\begin{theorem}[Bhatt's vanishing {\cite[Corollary 3.7]{BMP+23}}, \cite{Bhat20}]\label{thm: Bhatt}
Let $X$ be a normal integral projective $R$-scheme and let $L$ be a big and semiample $\QQ$-Cartier divisor on $X$. Then
\[
    \fHRRGa{i}{\maxmR}\mleft(X^\abscl, \sO_{X^\abscl}(-\nu^*L)\mright) = 0
\]
for all $i < \dim X$.
\end{theorem}

\begin{proof}
If $L$ is Cartier, then the assertion is \cite[Corollary 3.7]{BMP+23}.
The general case is reduced to this case by replacing $X$ with a sufficiently large finite cover $X'$, since $L$ is pulled back to a Cartier divisor on $X'$ (see \cite[Remark 4.4]{BMP+23}).
\end{proof}

\subsection{Base loci}
\begin{definition}\label{def: gg}
Let $X$ be a projective $R$-scheme and let $U$ be an open set in $X$. A coherent sheaf $\sF$ on $X$ is said to be \emph{globally generated over $U$} if the natural map $\mH^0(X, \sF) \otimes_R \sO_X \to \sF$ is surjective on $U$.
\end{definition}

\begin{definition}
Let $X$ be a normal integral projective $R$-scheme and let $D$ be a $\QQ$-Cartier divisor on $X$ with Cartier index $r$. The \emph{stable base locus} $\BB(D)$ of $D$ is defined to be $\bigcap_{m \ge 1} \scRed{\Bs(mrD)}$.
The \emph{augmented base locus} and the \emph{restricted base locus} of $D$ are defined by $\BBp(D) \coloneqq \bigcap_{A} \BB(D - A)$ and $\BBm(D) \coloneqq \bigcup_{A} \BB(D + A)$ respectively, where the intersection and the union are taken over all ample $\QQ$-divisors $A$ on $X$.
\end{definition}

\begin{definition}[{\cite[\S 2.5]{BMP+23}}]\label{def: nef-etc}
Let $X$ be a normal integral projective $R$-scheme and let $D$ be a $\QQ$-Cartier divisor on $X$. Write $f \colon X \to \Spec R$ for the structure morphism.
We say that $D$ is \emph{big} if $D \restrict{X_\eta}$ is big, where $\eta$ is the generic point of $f(X)$: equivalently, $\BBp(D) \neq X$.
We say that $D$ is \emph{semiample} if $mD$ is a globally generated Cartier divisor for some $m > 0$: equivalently, $\BB(D) = \emptyset$.
We say that $D$ is \emph{pseudoeffective} if $D + A$ is big for every ample $\QQ$-Cartier divisor $A$: equivalently, $\BBm(D) \neq X$.
We say that $D$ is \emph{nef} if $D + A$ is ample for every ample $\QQ$-Cartier divisor $A$: equivalently, $\BBm(D) = \emptyset$.
Note that $D$ is nef if and only if $D \cdot C \ge 0$ for every proper curve $C$ in the closed fiber $X_{\maxmR}$.
\end{definition}

Note that $D$ is ample if and only if $\BBp(D) = \emptyset$.

\begin{definition}\label{def: Wp}
Let $X$ be a normal integral projective $R$-scheme and let $D$ be a $\QQ$-Cartier divisor on $X$. Let $U$ be an open subset of $X$. We say that $D$ is \emph{weakly positive over $U$} (resp.~\emph{semiample over $U$}) if $\BBm(D) \cap U = \emptyset$ (resp.~$\BB(D) \cap U = \emptyset$).
\end{definition}

We say that $D$ is \emph{weakly positive} (without specifying an open subset of $X$) if $D$ is weakly positive over $U$ for some nonempty open subset $U$ of $X$ (cf.~\cite[Definition 4.12]{Ejir24}, \cite[Theorem 1.1]{BKKMSU15}). We do not use this terminology in this paper.

\begin{remark}\label{rmk: nef-cl-fib}
Let $X$ be a normal integral projective $R$-scheme and let $\sL$ be a line bundle on $X$.
If the line bundle $\sL_{\maxmR}$ on the closed fiber $X_{\maxmR}$ is nef, then $\sL$ is nef.

This assertion follows from \cite[Chapitre III, Th\'eor\`eme 4.7.1]{EGAIII} (cf.~\cite[Theorem 1.2.17]{LazI}).
Alternatively, we obtain the assertion from the characterization of nefness in terms of intersection numbers, noting that $C$ is contained in $X_{\maxmR}$, with notation as in \myref{def: nef-etc}.
\end{remark}

\section{\texorpdfstring{$\bmplus$}{+}-test ideals and \texorpdfstring{$\bmplus$}{+}-stable direct images}
The aim of this section is to extend the definition of the $\bmplus$-test ideal by Hacon, Lamarche, and Schwede \cite{HLS21} to the relative setting.
This extension, which we call the $\bmplus$-stable direct image, is used in the proof of \myref{main: main}.

\subsection{\texorpdfstring{$\bmplus$}{+}-stable sections and \texorpdfstring{$\bmplus$}{+}-test ideals}
We review the definitions of the space of the $\bmplus$-stable sections \cite{BMP+23} (cf.\ \cite{TaYo23}) and the $\bmplus$-test ideal of \cite{HLS21}.

\begin{remark}\label{rmk: fin-trace}
Let $X$ be a normal integral projective $R$-scheme and let $M$ be a (Weil) divisor on $X$, not necessarily $\QQ$-Cartier.
Let $g \colon X' \to X$ be a finite surjection from a normal integral scheme $X'$.
Then $g^* M$ is defined by the closure of $(g\restrict{U})^*(M\restrict{U})$ in $X'$ where $U = \scReg{X}$ is the regular locus of $X$ (cf.~\cite[Proof of Proposition 5.20]{KoMo98}).

We have the Grothendieck trace map $g_*\sO_{X'}(\dK_{X'}) \to \sO_X(\dK_X)$. Since $M$ is Cartier on $U = \scReg{X}$, we obtain the map $g_*\sO_{X'}(\dK_{X'} + g^*M)\restrict{U} \to \sO_X(\dK_X + M)\restrict{U}$.
Taking the reflexive hulls, we have the map $g_*\sO_{X'}(\dK_{X'}+g^*M) \to \sO_X(\dK_X+M)$. We obtain the composition
\[
g_*\sO_{X'}\mleft(\dK_{X'}+\ceil{g^*(M-B)}\mright)
\tomono g_*\sO_{X'}\mleft(\dK_{X'}+g^*M\mright)
\to \sO_X(\dK_X+M)
\]
for each $\QQ$-divisor $B \ge 0$ on $X$.
\end{remark}

\begin{definition}[{\cite[Section 4]{BMP+23}}]\label{def: mBMP}
Let $X$ be a normal integral projective $R$-scheme, let $M$ be a divisor on $X$ and let $B \ge 0$ be a $\QQ$-divisor on $X$.
\begin{itemize}
\item Define the submodule $\mBMP(X,B,\sO_X(\dK_X+M)) \subset \mH^0(X,\sO_X(\dK_X+M))$ to be
    \[
        \bigcap_{g \colon X' \to X} \im\mleft(\mH^0\mleft(X',\sO_{X'}\mleft(\dK_{X'}+\ceil{g^*(M-B)}\mright)\mright) \to \mH^0\mleft(X,\sO_X(\dK_X+M)\mright)\mright)\text,
    \]
    where $g \colon X' \to X$ runs over all finite surjections from normal integral schemes. The sections in $\mBMP\mleft(X,B,\sO_X(\dK_X+M)\mright)$ are called the \emph{$\bmplus$-stable sections of $\mH^0(X,\sO_X(\dK_X+M))$ with respect to $(X, B)$}.
    We write $\mBMP\mleft(X,\sO_X(\dK_X+M)\mright)$ for $\mBMP\mleft(X,0,\sO_X(\dK_X+M)\mright)$.
\item Let $D = \sum_i D_i$ be a reduced divisor on $X$ with $D_i$ the irreducible components. Fix an integral closed subscheme $D_i^\abscl \subset X^\abscl$ lying over $D_i$, or equivalently, fix an integral closed subscheme $D_{i, X'}$ for each finite cover $g \colon X' \to X$ such that for every finite cover $h \colon X'' \to X'$, $D_{i, X''}$ lies over $D_{i, X'}$. Define the submodule $\mBMPAdj{D}(X,D+B,\sO_X(\dK_X+M)) \subset \mH^0(X,\sO_X(\dK_X+M))$ to be
    \begin{gather*}
        \bigcap_{g \colon X' \to X} \im\mleft(\bigoplus_i \mH^0\mleft(X',\sO_{X'}\mleft(\dK_{X'}+D_{i,X'}+\ceil{g^*(M-B-D)}\mright)\mright) \mright.\\
        \mleft. \to \mH^0\mleft(X,\sO_X\mleft(\dK_X+M\mright)\mright)\mright)\text,
    \end{gather*}
    where $g \colon X' \to X$ runs over all finite surjections from normal integral schemes.
    The map above comes from the Grothendieck trace map as follows. Set $D_{X'} \coloneqq \sum_i D_{i,X'}$. Then we have $D_{X'} \le g^*D$, and obtain the composition
    \begin{align*}
        &\phantom{\to} g_*\bigoplus_i \sO_{X'}\mleft(\dK_{X'}+D_{i,X'}+\ceil{g^*(M-B-D)}\mright) \\
        &\to g_*\sO_{X'}\mleft(\dK_{X'}+D_{X'}+\ceil{g^*(M-B-D)}\mright) \\
        &\tomono g_*\sO_{X'}\mleft(\dK_{X'}+\ceil{g^*(M-B)}\mright) \\
        &\to \sO_X\mleft(\dK_X+M\mright)\text{,}
    \end{align*}
    where the first map is induced by the diagonal map.
    The definition of $\mBMPAdj{D}(X,D+B,\sO_X(\dK_X+M))$ is independent of the choice of $D_i^\abscl \subset X^\abscl$ by \cite[Lemma 4.23]{BMP+23}.
\end{itemize}
\end{definition}

\begin{remark}\label{rmk: B0alt}
If $M - B$ is $\QQ$-Cartier, we can define $\mBMP(X,B,\sO_X(\dK_X+M))$ in terms of alterations by \cite[Corollary 4.13, Definition 4.2]{BMP+23}:
we have
\begin{align*}
    &\mBMP(X,B,\sO_X(\dK_X+M)) =\\
    &\bigcap_{g \colon X' \to X} \im\mleft(\mH^0\mleft(X',\sO_{X'}\mleft(\dK_{X'}+\ceil{g^*(M-B)}\mright)\mright) \to \mH^0\mleft(X,\sO_X(\dK_X+M)\mright)\mright)\text,
\end{align*}
where $g \colon X' \to X$ runs over all projective generically finite morphisms from normal integral schemes. Here, the map in the intersection is induced by the Stein factorization of $g$ and \myref{rmk: fin-trace}.
\end{remark}

\begin{definition}[{\cite[Definition 4.3, 4.14, 4.4]{HLS21}}]\label{def: sHLS}
Let $X$ be a normal integral projective $R$-scheme.
\begin{itemize}
    \item Let $B \ge 0$ be a $\QQ$-divisor on $X$. Suppose that $\sL$ is a very ample line bundle on $X$. Let $\sJ_i \subset \sOm_X \otimes \sL^i$ be the subsheaf generated by $\mBMP\mleft(X, B, \sOm_X \otimes \sL^i\mright)$ for $i > 0$. The \emph{$\bmplus$-test submodule} of $(X,B)$, denoted by $\sHLS(\sOm_X, B)$, is defined by $\sHLS(\sOm_X, B) \coloneqq \sJ_i \otimes \sL^{-i} \subset \sOm_X$ for $i \gg 0$.
    This definition is independent of the choice of $\sL$.
    We write $\sHLS(\sOm_X)$ for $\sHLS(\sOm_X, 0)$.
    \item Let $B$ be a $\QQ$-divisor on $X$, not necessarily effective. Define
    \[
        \sHLS(\sOm_X, B) \coloneqq \sHLS(\sOm_X, B+H) \otimes \sO_X(H) \subset \sOm_X \otimes \rK(X) \text,
    \]
    where $H$ is a Cartier divisor on $X$ such that $B+H \ge 0$. This subsheaf is independent of the choice of $H$ by \cite[Lemma 4.8 (b)]{HLS21}.
    \item Let $\Delta \ge 0$ be a $\QQ$-divisor on $X$. Define
    \[
        \sHLS(\sO_X, \Delta) \coloneqq \sHLS(\sOm_X, \dK_X+\Delta)\text,
    \]
    which is called the \emph{$\bmplus$-test ideal} of $(X,\Delta)$. By~\cite[Lemma 4.18, 4.17]{HLS21}, this is an ideal sheaf of $X$ that is independent of the choice of a canonical divisor $\dK_X$.
    \item Let $B \ge 0$ be a $\QQ$-divisor on $X$ and let $D$ be a reduced divisor on $X$. Suppose that $\sL$ is a very ample line bundle on $X$ and $\sN_i \subset \sOm_X(D) \otimes \sL^i$ is the subsheaf generated by $\mBMPAdj{D}\mleft(X, D+B, \sOm_X(D) \otimes \sL^i\mright)$ for $i > 0$. Define $\sHLSAdjW{X}{D}{B} \coloneqq \sN_i \otimes \sL^{-i} \subset \sOm_X(D)$ for $i \gg 0$.
    This definition is independent of the choice of $\sL$.
\end{itemize}
\end{definition}

\begin{definition}
Let $X$ be a normal integral projective $R$-scheme, let $M$ be a divisor on $X$ and let $B$ be a $\QQ$-divisor on $X$, not necessarily effective. Define
\begin{align*}
    \mBMP(X,B,\sO_X(\dK_X+M)) &\coloneqq \mBMP(X,B+H,\sO_X(\dK_X+M+H)) \\
    &\subset \mH^0(X, \sO_X(\dK_X+M) \otimes \rK(X))\text,
\end{align*}
where $H$ is a Cartier divisor on $X$ such that $B+H \ge 0$.
This definition is independent of the choice of $H$ by \cite[Lemma 3.3 (c)]{HLS21}.
\end{definition}

\begin{remark}[{\cite[Remark 4.3]{BMP+23}}]
Suppose that $X$ is not integral, but normal and equidimensional. Then we can still define the $\bmplus$-stable sections and the $\bmplus$-test ideal by decomposing $X$ into the connected components.
\end{remark}

\begin{definition}[{\cite[Section 5]{HLS21}}]
Let $U$ be a normal integral quasi-projective $R$-scheme and let $B$ be a $\QQ$-Cartier divisor on $U$.
\begin{itemize}
    \item Suppose that $U$ is affine and $B$ is $\QQ$-trivial. Take $X$ and $\barB$ such that $X$ is a normal integral projective $R$-scheme containing $U$ as an open subscheme, and $\barB$ is a $\QQ$-Cartier divisor on $X$ with $\barB \restrict{U} = B$. Then define $\sHLS(\sOm_U, B) \coloneqq \sHLS\mleft(\sOm_X, \barB\mright) \restrict{U}$. This definition is independent of the choice of $X$ and $\barB$.
    \item Let $U = \bigcup_i U_i$ be an open cover by sufficiently small affine open sets. Define $\sHLS(\sOm_U, B)$ by the condition $\sHLS(\sOm_U, B) \restrict{U_i} = \sHLS\mleft(\sOm_{U_i}, B \restrict{U_i}\mright)$ for all i. This definition is independent of the choice of $U_i$.
\end{itemize}
\end{definition}

\begin{remark}
    Let $X$ be a normal integral projective $R$-scheme, let $U$ be an open set in $X$ and let $\Delta \ge 0$ be a $\QQ$-divisor on $X$.
    Suppose that $U$ is regular, $\floor{\Delta \restrict{U}} = 0$, and $\Supp \Delta \restrict{U}$ is SNC. Then $\sHLS(\sO_X,\Delta) \restrict{U} = \sO_X \restrict{U}$ and $\sHLS(\sOm_X,\Delta) \restrict{U} = \sOm_X \restrict{U}$ by \cite[Proposition 4.24]{HLS21}.
\end{remark}

\begin{definition}[{\cite[Definition 6.1]{BMP+23}}]
Suppose that $X$ is a normal integral projective $R$-scheme and $B \ge 0$ is a $\QQ$-divisor on $X$.
Then $(X,B)$ is said to be \emph{globally $\bmplus$-regular} if the natural map $\sO_X \to g_*\sO_{X'}(\floor{g^*B})$ splits for every finite cover $g \colon X' \to X$.
\end{definition}

\begin{remark}[{cf.\ \cite[Corollary 6.11]{BMP+23}}]\label{rmk: B0-gpr}
If $(X,B)$ is globally $\bmplus$-regular, then $\mBMP(X,B,\sOm_X\otimes\sM) = \mH^0(X,\sOm_X\otimes\sM)$ for any line bundle $\sM$ on $X$. In particular, $\sHLS(\sOm_X) = \sOm_X$. Indeed, for every finite cover $g \colon X' \to X$, applying the functor $\sHom_{\sO_X}(\blank, \sOm_X \otimes \sM)$ to the map $\sO_X \to g_*\sO_{X'}(\floor{g^*B})$, we see that the map
\[
    \mH^0\mleft(X',\sO_{X'}\mleft(\dK_{X'}+\ceil{g^*(M-B)}\mright)\mright) \to \mH^0\mleft(X,\sO_X(\dK_X+M)\mright)
\]
is surjective, which gives the assertion.
\end{remark}

\begin{definition}[{\cite[Definition 6.2, Definition 6.9]{MaSc21}}]\label{def: BCMReg}
    Let $(A, \frakm)$ be a complete normal local domain of mixed characteristic $(0,p)$ and of dimension $d$. Let $\Delta \ge 0$ be a $\QQ$-divisor on $\Spec A$ such that $\dK_A + \Delta$ is $\QQ$-Cartier. We may assume that $\dK_A$ is effective. We can write $n(\dK_A + \Delta) = \divisor(f)$ for some $n > 0$ and $f \in A$. We say that $(A, \Delta)$ is \emph{$\text{BCM}$-regular} if for every perfectoid big Cohen-Macaulay $R^\abscl$-algebra $B$, the natural map
    \[
        \mH_\frakm^d(A) \to \mH_\frakm^d(B) \xrightarrow{f^{1/n}} \mH_\frakm^d(B)
    \]
    is injective.
\end{definition}

\begin{remark}\label{rmk: pPlusRegular}
Assume that $(R, \maxmR)$ is a complete DVR of mixed characteristic.
Let $X$ be a normal integral flat quasi-projective $R$-scheme, and let $\Delta \ge 0$ be a $\QQ$-divisor on $X$ such that $\dK_X + \Delta$ is $\QQ$-Cartier.
Suppose that $x \in X$ is a point. Set $A \coloneqq \completion{\sO_{X,x}}$, the completion of $\sO_{X,x}$ at the maximal ideal. Assume either of the following.
\begin{itemize}
    \item The residue characteristic of $A$ is $p$ and $(A, \Delta \restrict{A})$ is $\text{BCM}$-regular.
    \item The residue characteristic of $A$ is $0$ and $(A, \Delta \restrict{A})$ is KLT.
\end{itemize}
Then we have $\sHLS(\sO_X, \Delta)_x = \sO_{X,x}$ by the recent preprint \cite[Theorem B (a), (j)]{BMP+24}.
Note that the $\bmplus$-test ideal $\sHLS(\sO_X, \Delta)$ by \cite{HLS21} is denoted by $\tau_{\mathbf{B}^0}(\sO_X, \Delta)$ in \cite{BMP+24}.
\end{remark}

We generalize \cite[Proposition 4.7]{HLS21}, using the Fujita-type vanishing by Keeler \cite[Theorem 1.5]{Keer03} instead of Serre vanishing:

\begin{lemma}[{cf.~\cite[Proposition 4.7]{HLS21}}]\label{lem: B0-Keeler}
Let $X$ be a normal integral projective $R$-scheme. Suppose that $B$ is a $\QQ$-divisor on $X$, not necessarily effective, and $\sL$ is an ample line bundle on $X$. Then there exists $i_0 \ge 0$ such that
\[
    \mBMP\mleft(X, B, \sOm_X \otimes \sL^i \otimes \sN\mright) = \mH^0\mleft(X, \sHLS(\sOm_X, B) \otimes \sL^i \otimes \sN\mright)
\]
for every $i \ge i_0$ and nef line bundle $\sN$ on $X$.
\end{lemma}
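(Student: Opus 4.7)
My plan is to follow the argument of \cite[Proposition 4.7]{HLS21} closely, substituting Keeler's Fujita-type vanishing \cite[Theorem 1.5]{Keer03} for the Serre vanishing used there. That substitution is precisely what should deliver a single bound $i_0$ valid uniformly over all nef line bundles $\sN$. Write $\mathcal{T} \coloneqq \sHLS(\sOm_X, B)$ for brevity; the statement splits into two inclusions which I would handle separately.

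The inclusion $\mBMP(X, B, \sOm_X \otimes \sL^i \otimes \sN) \subset \mH^0(X, \mathcal{T} \otimes \sL^i \otimes \sN)$ should hold with no largeness hypothesis on $i$ or $\sN$: a $\bmplus$-stable section lifts on every finite cover $g \colon X' \to X$ to a section of $\sO_{X'}(\dK_{X'} + \ceil{g^*(iL + N - B)})$, and tracking these lifts through the defining construction of $\mathcal{T}$ (equivalently, its characterization as an intersection of trace images over all finite covers) forces the section to factor through $\mathcal{T} \otimes \sL^i \otimes \sN$.

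For the nontrivial inclusion, I would first invoke \cite[Proposition 4.7]{HLS21} to pick $i_1$ with $\mH^0(X, \mathcal{T} \otimes \sL^{i_1}) = \mBMP(X, B, \sOm_X \otimes \sL^{i_1})$ and such that these $\bmplus$-stable sections globally generate $\mathcal{T} \otimes \sL^{i_1}$. Let $\mathcal{G}$ be the kernel of the resulting evaluation surjection
\[
    \mH^0(X, \mathcal{T} \otimes \sL^{i_1}) \otimes_R \sO_X \twoheadrightarrow \mathcal{T} \otimes \sL^{i_1}\text{.}
\]
Tensoring with $\sL^{i - i_1} \otimes \sN$ and taking the long exact cohomology sequence, the multiplication map
\[
    \mH^0(X, \mathcal{T} \otimes \sL^{i_1}) \otimes_R \mH^0(X, \sL^{i - i_1} \otimes \sN) \to \mH^0(X, \mathcal{T} \otimes \sL^i \otimes \sN)
\]
is surjective as soon as $\mH^1(X, \mathcal{G} \otimes \sL^{i - i_1} \otimes \sN)$ vanishes. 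Keeler's theorem applied to the coherent sheaf $\mathcal{G}$ then furnishes an $i_0 \ge i_1$, depending only on $\mathcal{G}$ and $\sL$, for which that $\mH^1$ is zero for every $i \ge i_0$ and every nef $\sN$. To close, each elementary tensor $\tau \otimes t$ in the source lies in $\mBMP(X, B, \sOm_X \otimes \sL^i \otimes \sN)$: a $\bmplus$-lift $\tau' \in \mH^0(X', \sO_{X'}(\dK_{X'} + \ceil{g^*(i_1 L - B)}))$ of $\tau$ on any finite cover $g$, multiplied by $g^* t$, is a lift of $\tau \cdot t$ in $\mH^0(X', \sO_{X'}(\dK_{X'} + \ceil{g^*(iL + N - B)}))$.

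The main obstacle, and essentially the only departure from \cite[Proposition 4.7]{HLS21}, is securing the uniformity of $i_0$ over all nef $\sN$; this is exactly what Keeler's Fujita-type vanishing is designed to provide, so once the short exact sequence involving $\mathcal{G}$ is in place the argument should go through mechanically. A minor bookkeeping point is the reduction from the very ample line bundle used implicitly in the definition of $\mathcal{T}$ to the merely ample $\sL$ of the statement, which is handled by replacing $\sL$ with a sufficiently large power of itself when choosing $i_1$.
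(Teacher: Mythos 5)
Your proposal is correct and takes essentially the same route as the paper: both replace the Serre vanishing in the argument of \cite[Proposition 4.7]{HLS21} with Keeler's Fujita-type vanishing \cite[Theorem 1.5]{Keer03} to obtain a bound $i_0$ uniform in the nef twist $\sN$, you merely unpack the paper's graded-module bookkeeping into an explicit kernel sheaf $\mathcal{G}$ and an $\mH^1$-vanishing statement, and both conclude via the multiplicativity of $\bmplus$-stable sections under multiplication by global sections (\cite[Lemma 4.2]{HLS21}). The only cosmetic adjustment is to take the kernel of a surjection from a free $R$-module onto $\sHLS(\sOm_X,B)\otimes\sL^{i_1}$ (choosing generators of its $\mH^0$), so that the global sections of the middle term twisted by $\sL^{i-i_1}\otimes\sN$ are literally a tensor product of spaces of global sections.
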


\begin{proof}
We follow the argument of \cite[Proposition 4.7]{HLS21}.

First, we can assume $B \ge 0$: we replace $B$ by $B + H$ for a sufficiently ample divisor $H$, and note that $\sN(H)$ is nef.
It is enough to prove the assertion in the case $i = i_0$ since $\sL^{i-i_0} \otimes \sN$ is nef.
We may thus replace $\sL$ with its high enough power.
For a coherent sheaf $\sF$ on $X$, we write
\[
    \mH^0_*(X, \sF) \coloneqq \bigoplus_{i \ge 0} \mH^0\mleft(X, \sF \otimes \sL^i\mright)\text.
\]
Define $S \coloneqq \mH^0_*(X, \sO_X)$ and $J \coloneqq \bigoplus_{i \ge 0} \mBMP\mleft(X, B, \sOm_X \otimes \sL^i\mright)$.
We note that the $\sO_X$-module associated to $J$ is $\sAssoc{J} \isom \sHLS(\sOm_X, B)$.
\begin{claim}[{cf.~\cite[Chapitre III, Th\'eor\`eme 2.3.1]{EGAIII}}]\label{claim: B0-Keeler}
    Let $M$ be a finitely generated graded $S$-module and write $\sAssoc{M}$ for the $\sO_X$-module associated to $M$.
    Then there exists $i_0 \ge 0$ such that
    \[
        \mleft[M \otimes_S \mH^0_*(X, \sN)\mright]_i \to \mleft[\mH^0_*\mleft(X, \sAssoc{M} \otimes_X \sN\mright)\mright]_i = \mH^0\mleft(X, \sAssoc{M} \otimes \sL^i \otimes \sN\mright)
    \]
    is an isomorphism for every $i \ge i_0$ and nef line bundle $\sN$ on $X$.
\end{claim}
We prove this claim.
Note that the claim is standard if $\sN$ is fixed (\cite[Chapitre III, Th\'eor\`eme 2.3.1]{EGAIII}, \citestacks{0AG7}).
Since $M$ is finitely generated, we can take a surjection $S(a_1) \oplus \dotsm \oplus S(a_s) \toepi M$ where $s \ge 0$ and $a_i \in \ZZ$. Form the exact sequence
\[
    0 \to K \to \bigoplus_{1 \le l \le s} S(a_l) \to M \to 0\text.
\]
Hence, we have the following diagram
\begin{DIFnomarkup}
\[
    \begin{tikzcd}
        &[-1em]
        K \otimes \mH^0_*(\sN) \ar[r] \ar[d, "\kappa"] &
        (\bigoplus_{l} S(a_l)) \otimes \mH^0_*(\sN) \ar[r] \ar[d, "\isom" sloped] &
        M \otimes \mH^0_*(\sN) \ar[r] \ar[d, "\mu"] &[-1em]
        0 \\
        0 \ar[r] &
        \mH^0_*(\sAssoc{K} \otimes \sN) \ar[r] &
        \bigoplus_{l} \mH^0_*(\sN)(a_l) \ar[r] &
        \mH^0_*(\sAssoc{M} \otimes \sN) \ar[r] &
        \mH^1_*(\sAssoc{K} \otimes \sN)
    \end{tikzcd}
\]
\end{DIFnomarkup}with exact rows.
Here we abbreviate $\mH^0_*(X, \sF)$ to $\mH^0_*(\sF)$.
By \cite[Theorem 1.5]{Keer03}, there exists $i_1 \ge 0$ such that $\mH^1_*(\sAssoc{K} \otimes \sN)$ is zero in degree $i \ge i_1$ for every nef line bundle $\sN$ on $X$.
Hence, $\mu$ is surjective in degree $i \ge i_1$ by the snake lemma. Applying the same argument to $K$, we see that there exists $i_0 \ge i_1$ such that $\kappa$ is surjective in degree $i \ge i_0$.
Therefore, $\mu$ is an isomorphism in degree $i \ge i_0$ by the snake lemma, which concludes the proof of \myref{claim: B0-Keeler}.

By \myref{claim: B0-Keeler}, there exists $i_0 \ge 0$ such that the canonical morphism
\[
    \alpha_i \colon \mleft[J \otimes_S \mH^0_*(X, \sN)\mright]_i \xrightarrow{\isom} \mH^0\mleft(X, \sHLS(\sOm_X, B) \otimes \sL^i \otimes \sN\mright)
\]
is isomorphic for every $i \ge i_0$ and nef line bundle $\sN$.
By \cite[Lemma 4.2]{HLS21}, it follows that
\[
    \mH^0\mleft(X, \sHLS(\sOm_X, B) \otimes \sL^i \otimes \sN\mright) = \im \alpha_i \subset \mBMP\mleft(X, B, \sOm_X \otimes \sL^i \otimes \sN\mright)\text,
\]
which completes the proof.
\end{proof}

We remark that the following special case of \myref{lem: B0-Keeler} can be proven without \cite{Keer03}:

\begin{remark}[{\cite[Proposition 4.7]{HLS21}}]\label{rmk: B0-without-Keeler}
Let $X$, $B$ and $\sL$ be as in \myref{lem: B0-Keeler}, and assume $B \ge 0$. Let $\sN$ be a line bundle on $X$. Then there exists $i_0 \ge 0$ such that
\[
    \mBMP\mleft(X, B, \sOm_X \otimes \sL^i \otimes \sN\mright) = \mH^0\mleft(X, \sHLS(\sOm_X, B) \otimes \sL^i \otimes \sN\mright)
\]
for every $i \ge i_0$.

To prove this, since $\sL^i \otimes \sN$ is nef for $i \gg 0$, we may assume that $\sN$ is nef. Hence, the statement becomes a special case of \myref{lem: B0-Keeler}.
In this situation, we can replace Keeler's result \cite[Theorem 1.5]{Keer03} by Serre vanishing in the proof of \myref{lem: B0-Keeler}, since $\sN$ is fixed. Hence, we do not need \cite{Keer03} to show this special case of \myref{lem: B0-Keeler}.
Note that the statement can also be shown as a special case of \myref{lem: RelB0-H0} below.
\end{remark}

The following is a slight generalization of \cite[Lemma 3.10]{HLS21}.

\begin{lemma}\label{lem: B0-et}
Let $X$ be a normal integral projective $R$-scheme. Let $M$ be a divisor on $X$ and let $B$ be a $\QQ$-divisor on $X$. Suppose that $S$ is a finite \'etale domain over $R$.
Write $X_S \coloneqq X \otimes_R S$, $M_S$ and $B_S$ for the base changes by $R \to S$ of $X$, $M$ and $B$. We have
\[
    \mBMP\mleft(X, B, \sO_X\mleft(\dK_X + M\mright)\mright) \otimes_R S = \mBMP\mleft(X_S, B_S, \sO_{X_S}\mleft(\dK_{X_S} + M_S\mright)\mright)\text.
\]
\end{lemma}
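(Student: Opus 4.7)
The plan is to reduce to flat base change. Since $R \to S$ is finite étale and $\Spec R$ is connected (as $R$ is a domain), the map $R \to S$ is faithfully flat and $\pi \colon X_S \to X$ is finite étale; in particular $\dK_{X_S} = \pi^* \dK_X$, $M_S = \pi^* M$, $B_S = \pi^* B$, and $X_S$ is normal though possibly disconnected, in which case I interpret $\mBMP(X_S, B_S, \ldots)$ component-wise as in \cite[Remark 4.3]{BMP+23}. For each finite cover $g \colon X' \to X$ with $X'$ normal integral, the base change $g_S \colon X'_S \coloneqq X' \times_X X_S \to X_S$ has $X'_S$ normal, and flat base change gives
\[
    \mH^0\mleft(X', \sO_{X'}(\dK_{X'} + \ceil{g^*(M-B)})\mright) \otimes_R S \;\cong\; \mH^0\mleft(X'_S, \sO_{X'_S}(\dK_{X'_S} + \ceil{g_S^*(M_S - B_S)})\mright),
\]
compatibly with pushforward to $\mH^0(X, \sO_X(\dK_X+M)) \otimes_R S \cong \mH^0(X_S, \sO_{X_S}(\dK_{X_S}+M_S))$. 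Because $S$ is finitely generated projective over $R$, $-\otimes_R S$ commutes with arbitrary products and with kernels, hence with arbitrary intersections of submodules of a fixed module, and I would conclude
\[
    \mBMP\mleft(X, B, \sO_X(\dK_X+M)\mright) \otimes_R S \;=\; \bigcap_{g \colon X' \to X} \im\mleft(\mH^0(X'_S, \ldots) \to \mH^0(X_S, \ldots)\mright).
\]

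Next I would compare this intersection with $\mBMP(X_S, B_S, \ldots)$. The inclusion $\mBMP(X_S, B_S, \ldots) \subseteq \mBMP(X, B, \ldots) \otimes_R S$ is immediate: every connected component of $X'_S$ is a normal integral finite cover of the component of $X_S$ it lies over, $\im_{g_S}$ decomposes component-wise as a sum of such pushforwards, and each summand contains the $\mBMP$ of the corresponding target component. For the reverse inclusion I would fix a component $X_{S,j}$ of $X_S$ and a finite cover $h \colon Y \to X_{S,j}$ with $Y$ normal integral, and take $X' \to X$ to be the normalization of $X$ in the normal closure $\widetilde M$ of $\rK(Y)/\rK(X)$. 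Setting $L_j \coloneqq \rK(X_{S,j})$, which is separable over $\rK(X)$ because $\pi$ is étale, the factors of $\widetilde M \otimes_{\rK(X)} L_j$ are copies of $\widetilde M$ indexed by $\rK(X)$-embeddings $\sigma \colon L_j \hookrightarrow \widetilde M$; normality of $\widetilde M / \rK(X)$ extends each such $\sigma$ to an embedding $\rK(Y) \hookrightarrow \widetilde M$, producing on each factor an $L_j$-algebra map $\rK(Y) \to \widetilde M$. Geometrically, every connected component of $X'_S$ lying over $X_{S,j}$ factors through $Y \to X_{S,j}$, so the $j$-th summand of $\im_{g_S}$ is contained in $\im_h$; intersecting over $g$ and then over all such $h$ yields the reverse inclusion.

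The step I expect to be the main obstacle is this cofinality argument. A na\"ive choice such as $X' = Y$ (viewing $Y \to X$ via $\pi_j \circ h$) fails, since the connected components of $Y \times_X X_S$ over $X_{S,j}$ other than the tautological copy of $Y$ given by $(\mathrm{id}_Y, h)$ need not factor through $Y$ over $X_{S,j}$, and consequently the $j$-th summand of $\im_{g_S}$ for this $g$ is in general strictly larger than $\im_h$. Enlarging to the normal-closure cover forces every component of $X'_S$ above $X_{S,j}$ to dominate $Y$ and unlocks the inclusion.
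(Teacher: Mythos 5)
Your argument is correct, but it follows a genuinely different route from the paper's. The paper disposes of this lemma in two lines: it sets $R' \coloneqq \mH^0(X,\sO_X)$, observes that when $R=R'$ the statement is exactly \cite[Lemma 3.10]{HLS21}, and reduces the general case to that one by noting that $R'\otimes_R S$ is a product of finitely many finite \'etale domains over $R'$. What you have written is in effect a self-contained proof of that cited input, and the two points you isolate are precisely the ones that carry the weight: (i) since $S$ is finite free over the local ring $R$, the functor $\blank\otimes_R S$ commutes with arbitrary products and with images, hence with the infinite intersection defining $\mBMP$; and (ii) the cofinality of the base-changed covers $X'_S\to X_S$ among all finite covers of the components of $X_S$, which you secure by normalizing $X$ in the normal closure of $\rK(Y)/\rK(X)$ so that every component of $X'_S$ over $X_{S,j}$ dominates $Y$ --- your remark that the na\"ive choice $X'=Y$ fails here is accurate, and the separability of $\rK(X_{S,j})/\rK(X)$ coming from \'etaleness is exactly what makes the factor-by-factor analysis of $\widetilde M\otimes_{\rK(X)}L_j$ work. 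If you write this up, make two small steps explicit: \'etale pullback commutes with the round-up $\lceil\,\cdot\,\rceil$ of $\QQ$-divisors, which is what legitimizes the flat base change identification of the twisted canonical modules; and the containment of images along $Z_\sigma\to Y\to X_{S,j}$ uses $\lceil p^*h^*(M-B)\rceil\le p^*\lceil h^*(M-B)\rceil$ together with compatibility of trace maps under composition, as in \cite{BMP+23}. The trade-off between the two approaches is the usual one: the paper's reduction is shorter but opaque, leaning entirely on the cited lemma, while your argument is longer but exhibits the mechanism and handles the possible disconnectedness of $X_S$ directly rather than through the decomposition of $R'\otimes_R S$.
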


\begin{proof}
We can assume that $B \ge 0$ by adding a sufficiently ample divisor to $B$ and $M$.
Set $R' \coloneqq \mH^0\mleft(X, \sO_X\mright)$.
If $R = R'$, the assertion is the same as \cite[Lemma 3.10]{HLS21}.

We reduce the problem to this case, as follows.
Since $R'$ is a domain finite over a complete local domain $R$, we see that $R'$ is a complete local domain from \citestacks{04GH}. Therefore, $R' \otimes_R S$ is a product of finitely many \'etale domains $S'_i$ over $R'$ by \citestacks{04GH}: $R' \otimes_R S \isom \prod_i S'_i$. Then we have
\begin{align*}
    \mBMP\mleft(X, B, \sO_X\mleft(\dK_X + M\mright)\mright) \otimes_R S
    &\isom \mBMP\mleft(X, B, \sO_X\mleft(\dK_X + M\mright)\mright) \otimes_{R'} (R' \otimes_R S) \\
    &\isom \prod_i \mBMP\mleft(X, B, \sO_X\mleft(\dK_X + M\mright)\mright) \otimes_{R'} S'_i
\end{align*}
and
\[
    X_S \isom X \otimes_{R'} (R' \otimes_R S) \isom \coprod_i X_{S'_i}\text.
\]
Thus the assertion follows from the first case.
\end{proof}

\subsection{\texorpdfstring{$\bmplus$}{+}-stable direct images}
We now generalize the $\bmplus$-test ideal \cite{HLS21} to the relative setting, obtaining a mixed characteristic analog of the Frobenius stable direct image \cite{HaXu15}, called the $\bmplus$-stable direct image.
See \myref{def: RelB0} and \myref{def: RelB0mk2}.

We work in the following setting.

\begin{setting}\label{set: RelB0}
Suppose that $(R, \maxmR)$ is a complete Noetherian local domain of residue characteristic $p > 0$.
Let $f \colon X \to Y$ be a surjection between integral projective $R$-schemes with $X$ normal. Define $n$ to be the dimension of the special fiber $Y_{\maxmR}$ of $Y$ over $R$. Write $\nu \colon X^\abscl \to X$ for the natural morphism.
\end{setting}

\begin{definition}[{cf.\ \cite[Definition 4.3]{HLS21}}]\label{def: RelB0}
Let $\sM$ be a line bundle on $X$ and let $B$ be an effective $\QQ$-divisor on $X$. Let $\sL$ be a very ample line bundle on $Y$. For $i > 0$, we define
\[
    \sJ_i \subset f_*(\sOm_X \otimes \sM) \otimes \sL^i
\]
to be the subsheaf globally generated by the submodule of $\mH^0\mleft(Y, f_* (\sOm_X \otimes \sM) \otimes \sL^i\mright)$ corresponding to
\[
    \mBMP\mleft(X, B, \sOm_X \otimes \sM \otimes f^* \sL^i\mright)
    \subset \mH^0\mleft(X, \sOm_X \otimes \sM \otimes f^* \sL^i\mright)
\]
via the projection formula
\[
    \mH^0\mleft(X, \sOm_X \otimes \sM \otimes f^* \sL^i\mright)
    \isom \mH^0\mleft(Y, f_* (\sOm_X \otimes \sM) \otimes \sL^i\mright)\text.
\]
We define the subsheaf
\[
    \relBWM{f}{X}{B}{\sM} \subset f_*(\sOm_X \otimes \sM)
\]
to be $\sJ_i \otimes \sL^{-i}$ for $i \gg 0$, which we call the \emph{$\bmplus$-stable direct image}. This definition does not depend on the choice of the line bundle $\sL$ by the argument similar to \cite[Lemma 4.5]{HLS21}.
We write $\relBWoM{f}{X}{\sM}$ for $\relBWM{f}{X}{0}{\sM}$.
\end{definition}

\begin{remark}\label{rmk: RelB0-easy}
If $Y = \Spec R$ and $f$ is the structure morphism of $X$, we have
\[
    \relBWM{f}{X}{B}{\sM} = \mBMP(X, B, \sOm_X \otimes \sM)\text{.}
\]

If $f$ is finite, we have
\[
    \relBWM{f}{X}{B}{\sM} = f_*(\sHLS(\sOm_X, B) \otimes \sM)\text.
\]
To show this, take an ample line bundle $\sL$ on $Y$ and let $i \gg 0$. Then $f_*(\sHLS(\sOm_X, B) \otimes \sM) \otimes \sL^i$ is globally generated by $\mH^0(Y, f_*(\sHLS(\sOm_X, B) \otimes \sM) \otimes \sL^i)$, and $\relBWM{f}{X}{B}{\sM} \otimes \sL^i$ is globally generated by $\mBMP(X, B, \sOm_X \otimes \sM \otimes f^* \sL^i)$.
We have $\mH^0(Y, f_*(\sHLS(\sOm_X, B) \otimes \sM) \otimes \sL^i) \isom \mBMP(X, B, \sOm_X \otimes \sM \otimes f^* \sL^i)$ by the projection formula and \myref{rmk: B0-without-Keeler} since $f^* \sL$ is ample. Therefore, we obtain the equation.
In particular, if $X = Y$ and $f = \id_X$, we get
\[
    \relBWM{(\id_X)}{X}{B}{\sM} = \sHLS(\sOm_X, B) \otimes \sM\text{.}
\]
\end{remark}

\begin{remark}[{cf.~\cite[Lemma 4.8 (a)]{HLS21}}]\label{rmk: monotone}
If $B \le B'$, then
\[
    \relBWM{f}{X}{B}{\sM} \supset \relBWM{f}{X}{B'}{\sM}\text.
\]
This follows from \myref{def: RelB0} and \cite[Lemma 3.3 (a)]{HLS21}.
\end{remark}

\begin{remark}\label{rmk: RelB0-gpr}
If $(X,B)$ is globally $\bmplus$-regular, we obtain
\[
    \relBWM{f}{X}{B}{\sM} = f_*(\sOm_X \otimes \sM)
\]
by \myref{rmk: B0-gpr}.
\end{remark}

\begin{lemma}[{cf.\ \cite[Proposition 4.7]{HLS21}}]\label{lem: RelB0-H0}
Let $\sM$ be a line bundle on $X$ and let $B \ge 0$ be a $\QQ$-divisor on $X$. Suppose that $\sL$ is an ample line bundle on $Y$. Then we have
\[
    \mBMP\mleft(X, B, \sOm_X \otimes \sM \otimes f^* \sL^i\mright) = \mH^0\mleft(Y, \relBWM{f}{X}{B}{\sM} \otimes \sL^i\mright)
\]
for $i \gg 0$.
\end{lemma}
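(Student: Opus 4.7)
The plan is to follow the template of \myref{lem: B0-Keeler} and \cite[Proposition 4.7]{HLS21}, running the graded-module argument on $Y$ with an ample line bundle $\sL$ providing the polarization and $\relBWM{f}{X}{B}{\sM}$ playing the role of $\sHLS$. After replacing $\sL$ by a sufficiently high power (which suffices since the conclusion concerns $i \gg 0$ and $\relBWM$ is compatible with twists by pullbacks via the projection formula), we may assume $\sL$ is very ample.

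The section ring $S \coloneqq \bigoplus_{i \ge 0} \mH^0(Y, \sL^i)$ is then a Noetherian graded $R$-algebra generated in degree $1$. Using that $\bmplus$-stable sections are closed under multiplication by arbitrary global sections (a direct consequence of the definition via intersections of images over finite covers, since $\ceil{g^*(M-B)} + g^*M' \le \ceil{g^*(M+M'-B)}$),
\[
    J \coloneqq \bigoplus_{i \ge 0} \mBMP\mleft(X, B, \sOm_X \otimes \sM \otimes f^* \sL^i\mright)
\]
forms a graded $S$-submodule of $N \coloneqq \mH^0_*(Y, f_*(\sOm_X \otimes \sM))$. Since $N$ is finitely generated over $S$ by the standard theory of coherent sheaves under an ample polarization, and $S$ is Noetherian, $J$ is itself finitely generated over $S$.

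The crux is now to identify the coherent sheaf $\widetilde{J}$ on $Y$ associated to $J$ with $\relBWM{f}{X}{B}{\sM}$. By the standard correspondence between finitely generated graded $S$-modules and coherent sheaves on $Y$ (see \cite[Chapitre III, Th\'eor\`eme 2.3.1]{EGAIII} or \citestacks{01MK}), for $i \gg 0$ the sheaf $\widetilde{J} \otimes \sL^i$ is globally generated by $J_i$ and hence coincides with the subsheaf $\sJ_i$ from \myref{def: RelB0}. Comparing with the definition of $\relBWM$ as the stable value of $\sJ_i \otimes \sL^{-i}$ in high degrees, this yields $\widetilde{J} = \relBWM{f}{X}{B}{\sM}$. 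Applying the same correspondence once more, the natural map $J_i \to \mH^0(Y, \widetilde{J} \otimes \sL^i) = \mH^0(Y, \relBWM{f}{X}{B}{\sM} \otimes \sL^i)$ is an isomorphism for $i \gg 0$, which is the desired equality. The main obstacle is the bookkeeping in this identification, which amounts to reconciling the two presentations of the same coherent sheaf --- as the sheafification of a graded module and as the stabilization of the $\sJ_i$ --- via the common generating system $\mBMP_i$ in high degree.
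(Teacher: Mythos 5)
Your argument is correct and is exactly the reasoning the paper intends: its proof of this lemma simply defers to the graded-module argument of \cite[Proposition 4.7]{HLS21} (cf.\ \myref{lem: B0-Keeler}), which you have spelled out, with the multiplicativity of $\bmplus$-stable sections and the Serre correspondence identifying the sheafification of $J$ with $\relBWM{f}{X}{B}{\sM}$. The only point to tidy is that after replacing $\sL$ by $\sL^k$ you directly obtain the equality only for $i$ divisible by $k$; to recover all $i \gg 0$, run the same argument with $\sM$ replaced by $\sM \otimes f^*\sL^j$ for $j = 0, \dots, k-1$ and invoke \myref{lem: RelB0-proj}.
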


\begin{proof}
The proof is similar to \cite[Proposition 4.7]{HLS21}.
We may replace $\sL$ with its high enough power (by \myref{lem: RelB0-proj} below, for example). For a coherent sheaf $\sF$ on $Y$, write $\mH^0_*(Y, \sF) \coloneqq \bigoplus_{i \ge 0} \mH^0\mleft(Y, \sF \otimes \sL^i\mright)$.
Define $S \coloneqq \mH^0_*(Y, \sO_Y)$ and $J \coloneqq \bigoplus_{i \ge 0} \mBMP\mleft(X, B, \sOm_X \otimes \sM \otimes f^*\sL^i\mright)$.
$J$ is identified with an $S$-submodule of $\mH^0_*\mleft(Y, f_*(\sOm_X \otimes \sM)\mright)$ by the projection formula.
Note that the $\sO_Y$-module associated to $J$ is $\sAssoc{J} \isom \relBWM{f}{X}{B}{\sM}$. Since the canonical morphism $J \to \mH^0_*(Y, \sAssoc{J})$ is an isomorphism in sufficiently high degrees by \cite[Chapitre III, Th\'eor\`eme 2.3.1]{EGAIII}, we conclude the assertion.
\end{proof}

\begin{lemma}[{cf.\ \cite[Proposition 4.5]{HLS21}}]\label{lem: suffample}
Suppose that $\sM$ is a line bundle on $X$ and $B \ge 0$ is a $\QQ$-divisor on $X$.
Then
\[
    \relBWM{f}{X}{B}{\sM} \otimes \sA
\]
is globally generated by $\mBMP(X, B, \sOm_X \otimes \sM \otimes f^* \sA)$ for every sufficiently ample line bundle $\sA$ on $Y$.
More specifically, for every ample line bundle $\sL$ on $Y$, there exists $i_0 \ge 0$ such that this statement holds for all $\sA$ of the form $\sL^i \otimes \sN$, where $i \ge i_0$ and $\sN$ is a globally generated line bundle on $Y$.
\end{lemma}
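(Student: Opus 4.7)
The plan is to combine the preceding lemma, which for $i \gg 0$ identifies $\mBMP(X, B, \sOm_X \otimes \sM \otimes f^*\sL^i)$ with $\mH^0(Y, \relBWM{f}{X}{B}{\sM} \otimes \sL^i)$, with a multiplicativity property of $\bmplus$-stable sections under pullback, following the strategy of \cite[Proposition 4.5]{HLS21}. First I fix the ample line bundle $\sL$ on $Y$, pass to a sufficiently high power to assume it is very ample, and choose $i_0 > 0$ such that
\[
    \mBMP\mleft(X, B, \sOm_X \otimes \sM \otimes f^*\sL^i\mright) = \mH^0\mleft(Y, \relBWM{f}{X}{B}{\sM} \otimes \sL^i\mright)
\]
for every $i \ge i_0$. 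Combined with the definition $\relBWM{f}{X}{B}{\sM} = \sJ_i \otimes \sL^{-i}$, this shows that $\relBWM{f}{X}{B}{\sM} \otimes \sL^i$ is globally generated by its $\bmplus$-stable sections for each $i \ge i_0$; this is the statement for $\sN = \sO_Y$.

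For a general globally generated line bundle $\sN$ on $Y$, I pick global generators $t_1, \ldots, t_s \in \mH^0(Y, \sN)$. At any closed point $y \in Y$, global generation of both $\relBWM{f}{X}{B}{\sM} \otimes \sL^i$ and $\sN$ at $y$ implies that the products $\sigma \otimes t_j$, as $\sigma$ ranges over global sections of $\relBWM{f}{X}{B}{\sM} \otimes \sL^i$ and $j$ over $\{1, \ldots, s\}$, generate the stalk of $\relBWM{f}{X}{B}{\sM} \otimes \sL^i \otimes \sN$ at $y$. By the preceding lemma, each such $\sigma$ lifts to a $\bmplus$-stable section $\tilde\sigma \in \mBMP(X, B, \sOm_X \otimes \sM \otimes f^*\sL^i)$, and the product $\sigma \otimes t_j$ corresponds via the projection formula to $\tilde\sigma \cdot f^*t_j$ in $\mH^0(X, \sOm_X \otimes \sM \otimes f^*(\sL^i \otimes \sN))$. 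It thus suffices to show that each $\tilde\sigma \cdot f^*t_j$ lies in $\mBMP(X, B, \sOm_X \otimes \sM \otimes f^*(\sL^i \otimes \sN))$.

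This last step is the main point, and it is a direct consequence of the fact that $\bmplus$-stable sections are preserved under multiplication by pullbacks of global sections, essentially \cite[Lemma 4.2]{HLS21}: for any finite cover $g \colon X' \to X$, multiplying a lift of $g^*\tilde\sigma$ on $X'$ by $g^*f^*t_j$ produces a lift of $g^*(\tilde\sigma \cdot f^*t_j)$, so the intersection of images defining $\mBMP$ is carried into the corresponding intersection for $\sL^i \otimes \sN$ in place of $\sL^i$. Crucially, $i_0$ was chosen using only $\sL$, so it serves uniformly for every globally generated $\sN$, which yields the uniform statement in the lemma. I do not foresee any serious obstacle beyond the careful bookkeeping of these multiplications.
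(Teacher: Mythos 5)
Your argument is correct and is essentially the proof the paper intends, since the paper simply defers to \cite[Proposition 4.5]{HLS21}: handle $\sN = \sO_Y$ via the definition of the $\bmplus$-stable direct image together with the preceding lemma, then pass to arbitrary globally generated $\sN$ by the multiplicativity of $\bmplus$-stable sections under products with pullbacks of global sections (\cite[Lemma 4.2]{HLS21}), noting that $i_0$ depends only on $\sL$. The one cosmetic point is that "replacing $\sL$ by a very ample power $\sL^k$" does not immediately return the statement for all $i \ge i_0$ (the residual factors $\sL^j$, $0 < j < k$, need not be globally generated); the standard fix is to compare $\sL^i$ with a fixed very ample twist $\sH^{i_1}$ and absorb the globally generated difference into $\sN$.
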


\begin{proof}
The proof is similar to \cite[Proposition 4.5]{HLS21}.
Replacing $\sL$ with its high enough power, we may assume that $\sL$ is very ample.
By definition, there exists $i_0 \ge 0$ such that $\relBWM{f}{X}{B}{\sM} \otimes \sL^i$ is globally generated by $\mBMP(X, B, \sOm_X \otimes \sM \otimes f^* \sL^i)$ for every $i \ge i_0$.
For every globally generated line bundle $\sN$ on $Y$, we see from \cite[Lemma 4.2]{HLS21} that $\relBWM{f}{X}{B}{\sM} \otimes \sL^i \otimes \sN$ is globally generated by $\mBMP(X, B, \sOm_X \otimes \sM \otimes f^* (\sL^i \otimes \sN))$, as desired.
\end{proof}

\begin{lemma}\label{lem: RelB0-proj}
Let $\sM$ be a line bundle on $X$ and let $B \ge 0$ be a $\QQ$-divisor on $X$. Let $\sN$ be a line bundle on $Y$. Then
\[
    \relBWM{f}{X}{B}{\sM} \otimes \sN = \relBWM{f}{X}{B}{\sM \otimes f^* \sN}
\]
as subsheaves of $f_*(\sOm_X \otimes \sM) \otimes \sN$.
\end{lemma}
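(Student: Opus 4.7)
The plan is to exhibit both sides as subsheaves of the common ambient sheaf $f_*(\sOm_X \otimes \sM) \otimes \sN \isom f_*(\sOm_X \otimes \sM \otimes f^*\sN)$, where the isomorphism is the projection formula, and then to verify that, after twisting by a sufficiently ample line bundle on $Y$, both subsheaves are globally generated by the same subspace of sections.

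First I would fix an ample line bundle $\sL$ on $Y$ and choose $j_0 \ge 0$ such that $\sN \otimes \sL^{j_0}$ is globally generated. Then for $i \ge j_0$, $\sN \otimes \sL^i = \sL^{i - j_0} \otimes (\sN \otimes \sL^{j_0})$ is an ample power tensored with a globally generated line bundle, which is precisely the form handled by \myref{lem: suffample}. Applying that lemma to $\sM$ with the twist $\sN \otimes \sL^i$ shows that, for $i \gg 0$, the sheaf $\relBWM{f}{X}{B}{\sM} \otimes \sN \otimes \sL^i$ is globally generated by $\mBMP(X, B, \sOm_X \otimes \sM \otimes f^*(\sN \otimes \sL^i))$. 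Applying it instead to $\sM \otimes f^*\sN$ with the twist $\sL^i$ shows that, for $i \gg 0$, the sheaf $\relBWM{f}{X}{B}{\sM \otimes f^*\sN} \otimes \sL^i$ is globally generated by $\mBMP(X, B, \sOm_X \otimes \sM \otimes f^*\sN \otimes f^*\sL^i)$.

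These two generating subspaces are literally equal as subspaces of $\mH^0(X, \sOm_X \otimes \sM \otimes f^*(\sN \otimes \sL^i))$, since $\mBMP$ depends only on the underlying line bundle on $X$ and not on any chosen decomposition. The projection formula then identifies $f_*(\sOm_X \otimes \sM \otimes f^*\sN) \otimes \sL^i$ with $f_*(\sOm_X \otimes \sM) \otimes \sN \otimes \sL^i$ compatibly with taking global sections, so $\relBWM{f}{X}{B}{\sM} \otimes \sN \otimes \sL^i$ and $\relBWM{f}{X}{B}{\sM \otimes f^*\sN} \otimes \sL^i$ are subsheaves of the same sheaf on $Y$ generated by the same subspace of global sections. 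They must therefore agree, and untwisting by $\sL^{-i}$ gives the desired equality. The only step requiring care is the bookkeeping of the projection formula identification so that the two generating subspaces really correspond; once this is tracked, the argument is formal and parallels the $\sL$-independence of $\sHLS$ already built into \myref{def: sHLS}. I do not anticipate a serious obstacle.
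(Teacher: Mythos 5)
Your proposal is correct and follows essentially the same route as the paper: both arguments twist by a sufficiently ample line bundle on $Y$, observe that $\sN$ tensored with that twist is again sufficiently ample in the sense of \myref{lem: suffample}, and conclude that both sides are generated by the same space $\mBMP(X, B, \sOm_X \otimes \sM \otimes f^*\sN \otimes f^*\sL^i)$. Your explicit handling of the globally generated factor $\sN \otimes \sL^{j_0}$ just spells out the paper's one-line remark that $\sN \otimes \sA$ is also sufficiently ample.
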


\begin{proof}
We show the assertion by tensoring both sides with a sufficiently ample line bundle $\sA$ on $Y$.
Observe that $\sN \otimes \sA$ is also sufficiently ample.
We thus conclude from \myref{lem: suffample} that both
\[
    \relBWM{f}{X}{B}{\sM} \otimes \sN \otimes \sA \; \text{and} \; \relBWM{f}{X}{B}{\sM \otimes f^* \sN} \otimes \sA
\]
are globally generated by $\mBMP(X, B, \sOm_X \otimes \sM \otimes f^*\sN \otimes f^*\sA)$, completing the proof.
\end{proof}

\begin{lemma}[{cf.\ \cite[Lemma 4.8 (b)]{HLS21}}]\label{lem: RelB0-N}
Let $\sM$ be a line bundle on $X$, let $B \ge 0$ be a $\QQ$-divisor on $X$, and let $F \ge 0$ be a Cartier divisor on $X$. Then
\[
    \relBWM{f}{X}{B}{\sO_X(-F) \otimes \sM} = \relBWM{f}{X}{B + F}{\sM}
\]
as subsheaves of $f_*(\sOm_X \otimes \sM)$.
\end{lemma}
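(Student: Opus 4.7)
The plan is to reduce the statement to the absolute case, which is essentially \cite[Lemma 4.8 (b)]{HLS21}: for any line bundle $\sN$ on $X$, one has
\[
    \mBMP(X, B, \sOm_X \otimes \sO_X(-F) \otimes \sN) = \mBMP(X, B+F, \sOm_X \otimes \sN)
\]
as submodules of $\mH^0(X, \sOm_X \otimes \sN)$, the left-hand side being embedded via $\sO_X(-F) \hookrightarrow \sO_X$. I would verify this absolute identity directly from the defining formula for $\bmplus$-stable sections: for any finite cover $g \colon X' \to X$ the divisor $F$ (and hence $g^*F$) is integral, so both sides involve intersections of images of trace maps from the \emph{same} sheaf $\sO_{X'}(\dK_{X'} + \ceil{g^*(N - B - F)})$ on $X'$, and these trace maps are compatible with the tautological inclusion $\sO_X(-F) \hookrightarrow \sO_X$.

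To pass from the absolute to the relative statement, I would twist by a sufficiently ample line bundle on $Y$ and invoke \myref{lem: suffample}. Concretely, pick an ample line bundle $\sL$ on $Y$ and an integer $i \gg 0$ large enough that \myref{lem: suffample} applies simultaneously to $\relBWM{f}{X}{B}{\sO_X(-F) \otimes \sM} \otimes \sL^i$ and to $\relBWM{f}{X}{B+F}{\sM} \otimes \sL^i$. Viewed as subsheaves of $f_*(\sOm_X \otimes \sM) \otimes \sL^i \isom f_*(\sOm_X \otimes \sM \otimes f^*\sL^i)$ (the first via $\sO_X(-F) \hookrightarrow \sO_X$ combined with the projection formula), both are globally generated by the corresponding $\bmplus$-stable sections. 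The absolute identity applied with $\sN = \sM \otimes f^*\sL^i$ shows that these two spaces of global sections coincide inside $\mH^0(X, \sOm_X \otimes \sM \otimes f^*\sL^i)$, so the two globally generated subsheaves agree. Untwisting by $\sL^i$ then yields the lemma.

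The only delicate point---and the closest thing to an obstacle---is the bookkeeping of the natural inclusions: one must verify that the identification of $\relBWM{f}{X}{B}{\sO_X(-F) \otimes \sM}$ with a subsheaf of $f_*(\sOm_X \otimes \sM)$ induced by $\sO_X(-F) \hookrightarrow \sO_X$ matches, after twisting by $\sL^i$, the corresponding identification of the $\bmplus$-stable sections as submodules of $\mH^0(X, \sOm_X \otimes \sM \otimes f^*\sL^i)$. Since both identifications arise from the same tautological inclusion attached to the effective Cartier divisor $F$, this bookkeeping is formal and poses no real obstacle.
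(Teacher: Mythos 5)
Your proposal is correct and follows essentially the same route as the paper: the paper's proof is exactly ``twist by a sufficiently ample line bundle and reduce to the absolute statement in \cite{HLS21}'' (cited there as Lemma 3.3(c)), which is what you do via \myref{lem: suffample}. Your additional sketch of the absolute identity from the defining intersection over finite covers is a correct expansion of the cited result rather than a different argument.
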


\begin{proof}
Let $\sL$ be an ample line bundle on $Y$ and let $i \gg 0$.
The left- and right-hand side twisted by $\sL^i$ are globally generated by $\mBMP(X, B, \sOm_{X} \otimes \sO_X(-F) \otimes \sM)$ and $\mBMP(X, B + F, \sOm_{X} \otimes \sM)$, respectively.
By \cite[Lemma 3.3 (c)]{HLS21}, we obtain the equality.
\end{proof}

\begin{proposition}\label{prop: B0-composition}
Let $g \colon Y \to Z$ be a surjection to an integral projective $R$-scheme $Z$. Suppose that $\sM$ is a line bundle on $X$ and $B \ge 0$ is a $\QQ$-divisor on $X$. Then
\[
    \relBWM{(g \circ f)}{X}{B}{\sM} \subset g_* \relBWM{f}{X}{B}{\sM}
\]
as subsheaves of $(g \circ f)_*(\sOm_X \otimes \sM)$.
\end{proposition}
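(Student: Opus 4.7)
The strategy is to reduce the inclusion to a global-generation comparison at a large ample twist, using \myref{lem: suffample} twice. We may assume $B \ge 0$, since the definition of $\relBWM{f}{X}{B}{\sM}$ for a general $\QQ$-divisor reduces to the effective case by the usual Cartier shift. Fix a very ample line bundle $\sL'$ on $Z$. By \myref{lem: suffample} applied to $g \circ f$, for all $i \gg 0$ the sheaf $\relBWM{(g \circ f)}{X}{B}{\sM} \otimes (\sL')^i$ is globally generated over $Z$ by $\mBMP(X, B, \sOm_X \otimes \sM \otimes (g \circ f)^*(\sL')^i)$. Hence it suffices to show that every such generating section lies in the submodule $\mH^0(Z, g_* \relBWM{f}{X}{B}{\sM} \otimes (\sL')^i)$ of $\mH^0(Z, (g \circ f)_*(\sOm_X \otimes \sM) \otimes (\sL')^i)$; untwisting by $(\sL')^{-i}$ will then yield the desired containment of subsheaves of $(g \circ f)_*(\sOm_X \otimes \sM)$.

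Using the projection formula together with \myref{lem: RelB0-proj}, the target module rewrites as $\mH^0(Y, \relBWM{f}{X}{B}{\sM \otimes (g \circ f)^*(\sL')^i})$. Setting $\sM'' \coloneqq \sM \otimes (g \circ f)^*(\sL')^i$, the proposition therefore reduces to the following sub-claim: for every line bundle $\sM''$ on $X$, one has $\mBMP(X, B, \sOm_X \otimes \sM'') \subset \mH^0(Y, \relBWM{f}{X}{B}{\sM''})$ as submodules of $\mH^0(Y, f_*(\sOm_X \otimes \sM''))$.

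To prove the sub-claim, fix an ample line bundle $\sL_0$ on $Y$. By \myref{lem: suffample} applied to $f$, for $j \gg 0$ large enough that $\sL_0^j$ is globally generated, the sheaf $\relBWM{f}{X}{B}{\sM''} \otimes \sL_0^j$ is globally generated by $\mBMP(X, B, \sOm_X \otimes \sM'' \otimes f^*\sL_0^j)$. Given $s \in \mBMP(X, B, \sOm_X \otimes \sM'')$ and $t \in \mH^0(Y, \sL_0^j)$, the product $s \cdot f^* t$ is a $\bmplus$-stable section of $\sOm_X \otimes \sM'' \otimes f^*\sL_0^j$: for every finite surjection $h \colon X' \to X$ from a normal integral scheme, a lift of $s$ may be multiplied by $h^* f^* t$, and integrality of the Cartier pullback $h^* f^*\sL_0^j$ makes the ceiling defining the $\mBMP$ lifting condition absorb the extra twist. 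Thus $s \otimes t \in \mH^0(Y, \relBWM{f}{X}{B}{\sM''} \otimes \sL_0^j)$ for every $t$, and covering $Y$ by open subsets on which some $t$ is nonvanishing and dividing locally by $t$ yields $s \in \mH^0(Y, \relBWM{f}{X}{B}{\sM''})$. The main obstacle lies in this last step, which crucially combines the preservation of $\bmplus$-stability under multiplication by pullbacks of global sections of Cartier bundles with the local descent enabled by the global generation of $\sL_0^j$.
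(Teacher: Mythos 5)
Your proof is correct and follows essentially the same route as the paper: both arguments twist by a sufficiently ample bundle on $Z$ to realize $\relBWM{(g \circ f)}{X}{B}{\sM}$ as generated by $\bmplus$-stable sections, and then use the multiplication map on $\bmplus$-stable sections (\cite[Lemma 4.2]{HLS21}) together with \myref{lem: suffample} to place those sections inside $g_*\relBWM{f}{X}{B}{\sM}$. The only (cosmetic) difference is in the final untwisting on $Y$: you divide locally by sections of a globally generated power $\sL_0^j$, whereas the paper tensors with a second sufficiently ample $\sA$ and compares generated subsheaves before cancelling $\sA$.
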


\begin{proof}[Proof ({cf.\ \cite[Proposition 5.1]{HLS21}})]
Let $\sL$ be a sufficiently ample line bundle on $Z$.
Define the subsheaf $\sJ \subset f_*(\sOm_X \otimes \sM)$ by the condition that $\sJ \otimes g^*\sL$ is globally generated by
\[
    \mBMP(X, B, \sOm_X \otimes \sM \otimes (g \circ f)^*\sL)\text.
\]
It follows from the definition of $\relBWM{(g \circ f)}{X}{B}{\sM}$ that
\[
    \relBWM{(g \circ f)}{X}{B}{\sM} \otimes \sL \subset g_*(\sJ \otimes g^*\sL) = g_*\sJ \otimes \sL\text.
\]

Suppose that $\sA$ is a sufficiently ample line bundle on $Y$. We have the map
\[
    \mBMP\mleft(X, B, \sOm_X \otimes \sM \otimes (g \circ f)^*\sL\mright) \otimes \mH^0(X, f^*\sA) \to \mBMP(X, B, \sOm_X \otimes \sM \otimes (g \circ f)^*\sL \otimes f^*\sA)
\]
by \cite[Lemma 4.2]{HLS21}. Hence we see from \myref{lem: suffample} that
\[
    (\sJ \otimes g^*\sL) \otimes \sA \subset \relBWM{f}{X}{B}{\sM} \otimes g^*\sL \otimes \sA\text.
\]
We thus conclude that
\[
    \relBWM{(g \circ f)}{X}{B}{\sM} \subset g_*\sJ \subset g_*\relBWM{f}{X}{B}{\sM}\text.
\]
\end{proof}

\begin{corollary}\label{cor: RelB0-incl}
For a line bundle $\sM$ on $X$ and a $\QQ$-divisor $B \ge 0$ on $X$, we have
\[
    \relBWM{f}{X}{B}{\sM} \subset f_*(\sHLS(\sOm_X, B) \otimes \sM)\text.
\]
\end{corollary}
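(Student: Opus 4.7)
The plan is to derive this as an immediate consequence of \myref{prop: B0-composition} applied to the trivial factorization of $f$.

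More precisely, I would apply \myref{prop: B0-composition} with the identity map $\id_X \colon X \to X$ playing the role of $f$ in the proposition, and with the original $f \colon X \to Y$ playing the role of $g$. The composition $f \circ \id_X$ is just $f$ itself, so the conclusion of the proposition becomes
\[
    \relBWM{f}{X}{B}{\sM} \subset f_* \relBWM{(\id_X)}{X}{B}{\sM}
\]
as subsheaves of $f_*(\sOm_X \otimes \sM)$.

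Next I would invoke \myref{rmk: RelB0-easy}, which identifies the $\bmplus$-stable direct image along the identity: $\relBWM{(\id_X)}{X}{B}{\sM} = \sHLS(\sOm_X, B) \otimes \sM$. Substituting this identification into the right-hand side of the inclusion above yields exactly the desired containment.

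No step here is really an obstacle: the content of the corollary is entirely packaged in \myref{prop: B0-composition}, and the role of the corollary is just to advertise the special case $Z = Y$, $g = \id$ (or equivalently the factorization $f = f \circ \id_X$) in a form that will be convenient to cite later. The only thing to verify is the compatibility of hypotheses, namely that $B \ge 0$ is admitted by the proposition (it is, since the proposition is stated for arbitrary $\QQ$-divisors $B$).
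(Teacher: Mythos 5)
Your proposal is correct and is exactly the paper's argument: the paper's proof simply cites \myref{prop: B0-composition} together with \myref{rmk: RelB0-easy}, which amounts to the factorization $f = f \circ \id_X$ you describe. Nothing further to add.
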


\begin{proof}
This follows from \myref{prop: B0-composition} and \myref{rmk: RelB0-easy}.
\end{proof}

\begin{definition}[{cf.\ \cite[Definition 4.14, 4.15]{HLS21}}]\label{def: RelB0mk2}
Let $\sM$ be a line bundle on $X$.
\begin{itemize}
\item Let $B$ be a $\QQ$-divisor on $X$ (not necessarily effective). We set
    \[
        \relBWM{f}{X}{B}{\sM} \coloneqq \relBWM{f}{X}{B + H}{\sM(H)} \subset f_*\mleft(\sOm_X \otimes \sM \otimes \rK(X)\mright)
    \]
    for a Cartier divisor $H$ satisfying $B + H \ge 0$. By \myref{lem: RelB0-N}, this definition is independent of the choice of $H$.
\item Let $\Delta \ge 0$ be a $\QQ$-divisor on $X$. We set
    \begin{align*}
        \relBOM{f}{X}{\Delta}{\sM} &\coloneqq \relBWM{f}{X}{\dK_X + \Delta}{\sM} \\
        &\subset f_*\mleft(\sHLS(\sOm_X, \dK_X + \Delta) \otimes \sM\mright)
        \isom f_*(\sHLS(\sO_X, \Delta) \otimes \sM) \\
        &\subset f_*\sM
    \end{align*}
    where the first inclusion follows from \myref{cor: RelB0-incl}.
    This subsheaf of $f_*\sM$ is independent of the choice of a canonical divisor $\dK_X$, as follows (cf.~\cite[Lemma 4.17]{HLS21}). For a Cartier divisor $H$ such that $\dK_X + H \ge 0$ and a sufficiently ample line bundle $\sL$, $\relBOM{f}{X}{\Delta}{\sM} \otimes \sL$ is globally generated by $\mBMP(X, \dK_X + H + \Delta, \sOm_X(H) \otimes \sM \otimes f^* \sL)$. We have the diagram
    \begin{DIFnomarkup}
    \[
        \begin{tikzcd}
            \mBMP(X, \Delta, \sM \otimes f^* \sL) \arrow[r, "\isom"] \arrow[d, phantom, "\subset" sloped]
            & \mBMP(X, \dK_X + H + \Delta, \sOm_X(H) \otimes \sM \otimes f^* \sL) \arrow[d, phantom, "\subset" sloped] \\
            \mH^0(X, \sM \otimes f^* \sL) \arrow[r, hook]
            & \mH^0(X, \sOm_X(H) \otimes \sM \otimes f^* \sL)
        \end{tikzcd}
    \]
    \end{DIFnomarkup}by \cite[Lemma 3.3 (c)]{HLS21}. Hence, $\relBOM{f}{X}{\Delta}{\sM} \otimes \sL$ is the subsheaf of $f_*\sM \otimes \sL$ globally generated by $\mBMP(X, \Delta, \sM \otimes f^* \sL)$, which is independent of the choice of the canonical divisor $\dK_X$. Therefore, $\relBOM{f}{X}{\Delta}{\sM}$ is independent of the choice of a canonical divisor $\dK_X$.
\end{itemize}
\end{definition}

\begin{remark}\label{rmk: non-eff}
    Note that all the lemmas above  concerning $\bmplus$-stable direct images (from \myref{rmk: RelB0-easy} to \myref{cor: RelB0-incl}) are naturally generalized to the case of a possibly non-effective $\QQ$-divisor $B$.

    For example, \myref{prop: B0-composition} extends as follows. Let $g \colon Y \to Z$ be a surjection to an integral projective $R$-scheme $Z$. Suppose that $\sM$ is a line bundle on $X$ and $B$ is a $\QQ$-divisor on $X$, not necessarily effective. Then
    \[
        \relBWM{(g \circ f)}{X}{B}{\sM} \subset g_* \relBWM{f}{X}{B}{\sM}
    \]
    as subsheaves of $(g \circ f)_*(\sOm_X \otimes \sM \otimes \rK(X))$. Note that the only difference (except for the non-effectivity of $B$) is the last ambient sheaf.

    This extension can be derived from \myref{prop: B0-composition} by replacing $B$ with $B+H$ and $\sM$ with $\sM(H)$ for a sufficiently ample divisor $H$ on $X$.
    We can similarly generalize the other lemmas for $\bmplus$-stable direct images.
\end{remark}

\begin{remark}\label{rmk: RelB0-M}
For a line bundle $\sM = \sO_X(M)$ and a $\QQ$-divisor $B$ on $X$, we have
\[
    \relBWM{f}{X}{B}{\sM} = \relBW{f}{X}{B - M}\text.
\]
Thus, the use of $\sM$ in \myref{def: RelB0mk2} is only for notational convenience.
\end{remark}

\begin{proposition}\label{prop: RelB0-Keeler}
Suppose that $B$ is a $\QQ$-divisor on $X$ and $\sM$ is an $f$-ample line bundle on $X$. Then for all $m \gg 0$,
\[
    \relBWM{f}{X}{B}{\sM^m} = f_*\mleft(\sHLS(\sOm_X, B) \otimes \sM^m\mright)\text.
\]
\end{proposition}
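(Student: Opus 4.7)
Since \myref{cor: RelB0-incl} already supplies the inclusion $\relBWM{f}{X}{B}{\sM^m} \subset f_*(\sHLS(\sOm_X, B) \otimes \sM^m)$, my task is the reverse inclusion for all $m \gg 0$. The plan is to produce, for each such $m$ and for a fixed ample line bundle $\sL$ on $Y$, a threshold $j_0 = j_0(m)$ so that, for $j \ge j_0$, the two sheaves have the same space of global sections after twisting by $\sL^j$ while the larger one is globally generated. This forces equality after twist, and then untwisting by $\sL^{-j}$ yields the claim.

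First I fix an ample $\sL$ on $Y$ and choose $k_0 \ge 0$ so that $\sH \coloneqq \sM \otimes f^*\sL^{k_0}$ is ample on $X$, which exists thanks to the $f$-ampleness of $\sM$. Applying \myref{lem: B0-Keeler} on $X$ with the ample $\sH$ yields $i_0 \ge 0$ such that
\[
\mBMP\mleft(X, B, \sOm_X \otimes \sH^{i_0} \otimes \sN\mright) = \mH^0\mleft(X, \sHLS(\sOm_X, B) \otimes \sH^{i_0} \otimes \sN\mright)
\]
for every nef line bundle $\sN$ on $X$. For any $m > i_0$ I factor $\sM^m \otimes f^*\sL^j = \sH^{i_0} \otimes \sN_{m,j}$ with $\sN_{m,j} \coloneqq \sM^{m-i_0} \otimes f^*\sL^{j - k_0 i_0}$; since $\sM^{m-i_0}$ is still $f$-ample, there is $j_1 = j_1(m)$ such that $\sN_{m,j}$ is ample on $X$, hence nef, for $j \ge j_1$.

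Combining this Keeler equality at $\sN = \sN_{m,j}$ with the unlabeled lemma appearing just before \myref{lem: suffample} (which identifies $\mH^0(Y, \relBWM{f}{X}{B}{\sM^m} \otimes \sL^j)$ with $\mBMP(X, B, \sOm_X \otimes \sM^m \otimes f^*\sL^j)$ for $j \gg 0$) gives, for each fixed $m > i_0$ and all $j$ large enough,
\begin{align*}
\mH^0\mleft(Y, \relBWM{f}{X}{B}{\sM^m} \otimes \sL^j\mright)
&= \mBMP\mleft(X, B, \sOm_X \otimes \sH^{i_0} \otimes \sN_{m,j}\mright) \\
&= \mH^0\mleft(X, \sHLS(\sOm_X, B) \otimes \sM^m \otimes f^*\sL^j\mright) \\
&= \mH^0\mleft(Y, f_*(\sHLS(\sOm_X, B) \otimes \sM^m) \otimes \sL^j\mright),
\end{align*}
the last equality by the projection formula. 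After enlarging $j$ further so that the coherent sheaf $f_*(\sHLS(\sOm_X, B) \otimes \sM^m) \otimes \sL^j$ on $Y$ is globally generated (Serre's theorem), the sheaf inclusion $\relBWM{f}{X}{B}{\sM^m} \otimes \sL^j \hookrightarrow f_*(\sHLS(\sOm_X, B) \otimes \sM^m) \otimes \sL^j$ must be surjective, since its shared global sections already generate the target; untwisting by $\sL^{-j}$ produces the desired identification of subsheaves of $f_*(\sOm_X \otimes \sM^m)$.

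The only non-routine step is the power bookkeeping in the setup: peeling off a fixed ample $\sH^{i_0}$ from $\sM^m \otimes f^*\sL^j$ while keeping the complementary factor $\sN_{m,j}$ nef is what forces the hypothesis $m > i_0$, with $j_1(m)$ produced by upgrading the $f$-ampleness of $\sM^{m-i_0}$ to ampleness on $X$ via a sufficiently large power of $f^*\sL$. Once this arithmetic is arranged, the remainder is bookkeeping that ties together \myref{lem: B0-Keeler}, its relative-setting counterpart, the projection formula, and Serre's theorem.
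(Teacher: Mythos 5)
Your proof is correct and takes essentially the same route as the paper: both apply \myref{lem: B0-Keeler} to an ample bundle of the form $\sM \otimes f^*\sL^{k}$, absorb the varying $f^*\sL^j$-twist into the nef slot of that lemma, identify the resulting section spaces via the projection formula, and conclude by global generation of $f_*(\sHLS(\sOm_X,B)\otimes\sM^m)\otimes\sL^j$ for $j \gg 0$. The only difference is bookkeeping (the paper keeps the full $m$-th power of the ample bundle as the base and puts only $f^*\sL^i$ in the nef slot, whereas you peel off a fixed $i_0$-th power), which does not change the argument.
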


\begin{proof}
Let $\sL$ be a very ample line bundle on $Y$. Since $\sM$ is $f$-ample, $\sM \otimes f^*\sL^l$ is ample for some $l > 0$. By \myref{lem: B0-Keeler}, there exists an integer $m_0$ such that
\[
    \mBMP\mleft(X, B, \sOm_X \otimes \mleft(\sM \otimes f^*\sL^l\mright)^m \otimes \sN'\mright) = \mH^0\mleft(X, \sHLS(\sOm_X, B) \otimes \mleft(\sM \otimes f^*\sL^l\mright)^m \otimes \sN'\mright)
\]
for every $m \ge m_0$ and nef line bundle $\sN'$ on $X$. Fix $m \ge m_0$. For every $i \ge 0$, by setting $\sN' = f^*\sL^i$, we have
\begin{align*}
    \mBMP\mleft(X, B, \sOm_X \otimes \sM^m \otimes f^*\sL^{ml+i}\mright) &= \mH^0\mleft(X, \sHLS(\sOm_X, B) \otimes \sM^m \otimes f^*\sL^{ml+i}\mright) \\
    &\isom \mH^0\mleft(Y, f_*\mleft(\sHLS(\sOm_X, B) \otimes \sM^m\mright) \otimes \sL^{ml+i}\mright)\text.
\end{align*}
Thus, the assertion follows, since the left-hand side generates $\relBWM{f}{X}{B}{\sM^m} \otimes \sL^{ml+i}$ by definition, and the right-hand side generates $f_*\mleft(\sHLS(\sOm_X, B) \otimes \sM^m\mright) \otimes \sL^{ml+i}$ by Serre vanishing.
\end{proof}

\begin{lemma}\label{lem: RelB0-et}
Let $B$ be a $\QQ$-divisor on $X$ and let $\sM$ be a line bundle on $X$. Suppose that $S$ is a finite \'etale domain over $R$. Write $X_S \coloneqq X \otimes_R S$, $f_S$, $B_S$, and $\sM_S$ for the base changes by $R \to S$. Then
\[
    \relBWM{f}{X}{B}{\sM} \otimes_R S = \relBWM{(f_S)}{{X_S}}{B_S}{\sM_S}\text.
\]
\end{lemma}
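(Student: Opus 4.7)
The plan is to reduce immediately to the effective case via the trick of \myref{def: RelB0mk2}: I choose a Cartier divisor $H$ on $X$ with $B + H \ge 0$; then $H$ pulls back to a Cartier divisor $H_S$ on $X_S$ with $B_S + H_S \ge 0$, and both sides of the claimed equality unfold in parallel to $\relBWM{f}{X}{B+H}{\sO_X(H)} \otimes_R S$ and $\relBWM{(f_S)}{X_S}{B_S + H_S}{\sO_{X_S}(H_S)}$ respectively. Hence it is enough to prove the analogous statement for $\relBWM{f}{X}{B'}{\sM}$ with $B' \ge 0$ an effective $\QQ$-divisor and $\sM$ an arbitrary line bundle on $X$.

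Next, I fix a very ample line bundle $\sL$ on $Y$ and set $\sL_S \coloneqq \sL \otimes_R S$, which is very ample on $Y_S$. By \myref{def: RelB0}, for $i \gg 0$ the sheaf $\relBWM{f}{X}{B'}{\sM} \otimes \sL^i$ is, by definition, the subsheaf of $f_*(\sOm_X \otimes \sM) \otimes \sL^i$ globally generated by the subspace $\mBMP(X, B', \sOm_X \otimes \sM \otimes f^* \sL^i)$ of $\mH^0(Y, f_*(\sOm_X \otimes \sM) \otimes \sL^i)$. The formation of this subsheaf commutes with the flat base change $R \to S$: since $R \to S$ is étale, one has $\sOm_{X_S} \isom \sOm_X \otimes_R S$, and flat base change yields $f_*(\sOm_X \otimes \sM) \otimes_R S \isom f_{S *}(\sOm_{X_S} \otimes \sM_S)$. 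Consequently $(\relBWM{f}{X}{B'}{\sM} \otimes_R S) \otimes \sL_S^i$ is the subsheaf of $f_{S *}(\sOm_{X_S} \otimes \sM_S) \otimes \sL_S^i$ globally generated by the image of $\mBMP(X, B', \sOm_X \otimes \sM \otimes f^* \sL^i) \otimes_R S$ in its global sections.

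The key input is then \myref{lem: B0-et}, which identifies this last subspace with $\mBMP(X_S, B'_S, \sOm_{X_S} \otimes \sM_S \otimes f_S^* \sL_S^i)$. But this is precisely the collection of sections generating $\relBWM{(f_S)}{X_S}{B'_S}{\sM_S} \otimes \sL_S^i$ as a subsheaf of the same ambient sheaf. Since two subsheaves of a common coherent sheaf that are globally generated by the same set of sections must coincide, untwisting by $\sL_S^{-i}$ yields the desired equality. The main technical point to verify carefully is that $X_S$—which need not be integral but is normal and equidimensional over $S$—must be treated via the remark following \myref{def: sHLS}, and that the various flat base change isomorphisms (for $\sOm$, for $f_*$, and for the space of $\bmplus$-stable sections supplied by \myref{lem: B0-et}) are mutually compatible; once this bookkeeping is in place the argument goes through directly.
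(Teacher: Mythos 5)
Your proof is correct and follows essentially the same route as the paper, whose entire argument is the one-line reduction ``by tensoring with a sufficiently ample line bundle, we reduce the problem to \myref{lem: B0-et}.'' Your write-up simply makes explicit the steps the paper leaves implicit (the reduction to the effective case via \myref{def: RelB0mk2}, flat base change for $f_*$ and for the generating subspaces of global sections, and the remark handling the possible non-integrality of $X_S$), all of which check out.
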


\begin{proof}
By \myref{rmk: RelB0-M}, we can assume that $\sM = \sO_X$.
Let $\sL$ be an ample line bundle on $Y$, so that $\sL_S$ is ample on $Y_S$. For $i \gg 1$, the left-hand side is given by $\im(\mBMP(X, B, \sOm_X \otimes f^* \sL^i) \otimes \sL^{-i} \to f_*\sOm_X)$, while the right-hand side is given by $\im(\mBMP(X_S, B_S, \sOm_{X_S} \otimes f_S^*\sL_S^i) \otimes \sL_S^{-i} \to f_{S,*}\sOm_{X_S})$.
As $R \to S$ is \'etale (hence flat), we conclude by \myref{lem: B0-et}.
\end{proof}

While \cite{HLS21} defined the $\bmplus$-test ideal more generally for a quasi-projective $R$-scheme, we do not give a similar generalization of the $\bmplus$-stable direct image.
We only give lemmas used in this paper.

\begin{proposition}\label{prop: RelB0-fmu}
Let $B$ be a $\QQ$-Cartier divisor on $X$ and let $\mu \colon X' \to X$ be a birational morphism from a normal integral projective $R$-scheme $X'$. Then
\[
    \relBW{f}{X}{B} = \relBW{(f \circ \mu)}{X'}{\mu^* B}\text{.}
\]
\end{proposition}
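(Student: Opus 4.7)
The plan is to reduce the statement to an equality of $\bmplus$-stable section spaces on $X$ and $X'$ after twisting by a sufficiently ample line bundle on $Y$, and then to exploit the fact that $X$ and $X'$ share the same function field.

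First, using \myref{def: RelB0mk2}, I choose a Cartier divisor $H$ on $X$ with $B + H \ge 0$; since $\mu^* H$ is Cartier on $X'$ and $\mu^* B + \mu^* H \ge 0$, this reduces the problem to the case $B \ge 0$. Next, I fix a very ample line bundle $\sL$ on $Y$, so that by \myref{lem: suffample} applied to $f$ and to $f \circ \mu$, for $i \gg 0$ the sheaves $\relBW{f}{X}{B} \otimes \sL^i$ and $\relBW{(f \circ \mu)}{X'}{\mu^* B} \otimes \sL^i$ are globally generated by $\mBMP\mleft(X, B, \sOm_X \otimes f^*\sL^i\mright)$ and $\mBMP\mleft(X', \mu^* B, \sOm_{X'} \otimes (f \circ \mu)^*\sL^i\mright)$ respectively. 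The natural trace inclusion $\mu_* \sOm_{X'} \hookrightarrow \sOm_X$ and the projection formula embed both $\mBMP$ subspaces into the common space $\mH^0\mleft(X, \sOm_X \otimes f^*\sL^i\mright)$, and the statement reduces to showing these two subspaces agree.

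For the inclusion from $X'$ into $X$, given any finite cover $g \colon X_g \to X$ of normal integral schemes, I form $X'_g$ as the normalization of an irreducible component of $X_g \times_X X'$ dominating $X'$. Then $X'_g \to X'$ is a finite cover and $\mu_g \colon X'_g \to X_g$ is proper birational. A codimension-one check shows $(\mu_g)_* \sO_{X'_g}\mleft(\dK_{X'_g} + \ceil{\mu_g^* D}\mright) \subset \sO_{X_g}\mleft(\dK_{X_g} + \ceil{D}\mright)$ for every $\QQ$-Cartier divisor $D$ on $X_g$, since both sides are reflexive of rank one and agree over the big open set where $\mu_g$ is an isomorphism. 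Intersecting these inclusions over all $g$ yields $\mBMP\mleft(X', \mu^* B, \sOm_{X'} \otimes \mu^* f^*\sL^i\mright) \subset \mBMP\mleft(X, B, \sOm_X \otimes f^*\sL^i\mright)$.

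The reverse inclusion is the main obstacle. Since $\rK(X) = \rK(X')$, finite covers of $X$ and of $X'$ are in bijection via normalization in a common finite extension of the function field. Given $s$ in the left-hand $\mBMP$ and a finite cover $X'_g \to X'$ of function field $L$, I let $X_g$ be the normalization of $X$ in $L$, producing a birational $\mu_g \colon X'_g \to X_g$; the hypothesis provides a lift $s_g$ on $X_g$, but $s_g$ need not pull back to $X'_g$ directly because $\mu_g$-exceptional divisors may have negative discrepancy. The plan is a cofinality argument: by passing to a sufficiently ramified further finite cover $X_h \to X_g$, the discrepancies of the corresponding $\mu_h \colon X'_h \to X_h$ along the relevant exceptional divisors become nonnegative, so that the new lift $s_h$ on $X_h$ does pull back to a section on $X'_h$, and pushing along $X'_h \to X'_g$ yields the desired lift on $X'_g$. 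This cofinality step, patterned on the analogous arguments in \cite{HLS21} and \cite{BMP+23}, is the delicate part, since in mixed characteristic one cannot appeal to resolution of singularities and must instead improve discrepancies via ramified covers.
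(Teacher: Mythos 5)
Your reduction is exactly the paper's: after choosing $H$ with $B+H\ge 0$ and twisting by a sufficiently ample power $\sL^i$ pulled back from $Y$, \myref{lem: suffample} identifies both sides with the subsheaves generated by $\mBMP(X,B,\sOm_X\otimes f^*\sL^i)$ and $\mBMP(X',\mu^*B,\sOm_{X'}\otimes(f\circ\mu)^*\sL^i)$, so everything comes down to the birational invariance of the spaces of $\bmplus$-stable sections. The paper disposes of that last step by citing \cite[Proposition 3.9]{HLS21}; you instead set out to reprove it, and that is where the gap lies. Your first inclusion $\mBMP(X',\dots)\subset\mBMP(X,\dots)$ is fine: the pushforward of the reflexive sheaf $\sO_{X'_g}(\dK_{X'_g}+\ceil{\mu_g^*D})$ along the proper birational map $\mu_g$ agrees with $\sO_{X_g}(\dK_{X_g}+\ceil{D})$ over a big open set, and the divisorial condition is checked in codimension one. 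But the reverse inclusion is only a declared plan: you assert that ``by passing to a sufficiently ramified further finite cover the discrepancies become nonnegative'' and then label this the delicate part without carrying it out. As written this is not an argument. A lift $s_g$ on $X_g$ fails to pull back to $X'_g$ precisely when some $\mu_g$-exceptional divisor has negative discrepancy relative to $\dK_{X_g}+\ceil{D}$; the singularities of $X_g$, a normalization of $X$ in an arbitrary and possibly wildly ramified extension, are completely uncontrolled, and you give no mechanism for why a further finite cover improves rather than worsens these discrepancies, nor why the improved covers are cofinal.

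It is also worth noting that this is not how the cited result is actually established. The birational invariance of $\mBMP$ in \cite{HLS21} and \cite{BMP+23} rests on the theorem that the intersection defining $\mBMP$ over all \emph{finite} covers coincides with the intersection over all \emph{alterations} (generically finite proper surjections from normal integral schemes); once alterations are allowed, an alteration of $X'$ is an alteration of $X$ and conversely, and the equality becomes formal. That equivalence of the two definitions is itself a nontrivial statement whose proof in mixed characteristic goes through Bhatt's vanishing for the absolute integral closure, not through a discrepancy-improvement argument via ramified covers. So you should either cite \cite[Proposition 3.9]{HLS21}, as the paper does, or supply an actual proof of your cofinality step; the sketch given does not constitute one.
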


\begin{proof}
Let $\sL$ be an ample line bundle on $Y$. For $i \gg 0$, the left- and right-hand side tensored with $\sL^i$ is generated by $\mBMP(X, B, \sOm_X \otimes f^*\sL^i)$ and $\mBMP(X', \mu^*B, \sOm_{X'} \otimes (f \circ \mu)^*\sL^i)$, respectively.
Hence the claim follows from \cite[Proposition 3.9]{HLS21}.
\end{proof}

\begin{lemma}\label{lem: RelB0-Op}
Let $B$ and $B'$ be $\QQ$-divisors on $X$. Suppose that $V$ is an open set in $Y$ and $U \coloneqq f^\inv(V) \subset X$. If $B \restrict{U} = B' \restrict{U}$, then we have
\[
    \relBW{f}{X}{B} \restrict{V} = \relBW{f}{X}{B'} \restrict{V}\text{.}
\]
\end{lemma}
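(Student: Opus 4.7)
The assertion is local on $V$, so I would proceed by two reductions. First, using the convention in \myref{def: RelB0mk2} for non-effective divisors, choose a Cartier divisor $H$ with $B + H \ge 0$ and $B' + H \ge 0$; these new divisors still agree on $U$, so we reduce to the case $B, B' \ge 0$. Second, replacing $B'$ by the coefficient-wise maximum $\max(B, B')$ (which also agrees with $B$ on $U$) and invoking symmetry, we reduce to the situation $B \le B'$ with $D \coloneqq B' - B \ge 0$ effective and supported in $X \setminus U$. The inclusion $\relBW{f}{X}{B'} \subset \relBW{f}{X}{B}$ then holds globally, since $B \le B'$ forces $\ceil{g^*(M-B')} \le \ceil{g^*(M-B)}$ on every finite cover $g$, giving $\mBMP(X, B', \sOm_X \otimes \sM) \subset \mBMP(X, B, \sOm_X \otimes \sM)$ by monotonicity. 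Thus only the reverse inclusion over $V$ needs to be established.

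Fix a very ample line bundle $\sL$ on $Y$, and take $i \gg 0$ so that $\relBW{f}{X}{B} \otimes \sL^i$ is globally generated by $\mBMP(X, B, \sOm_X \otimes f^*\sL^i)$. For each $y_0 \in V$, I would choose (for $j \gg 0$) a section $\sigma \in \mH^0(Y, \sL^j)$ that does not vanish at $y_0$ but has a high-order zero along the codimension-one components of $Y \setminus V$ meeting $f(\Supp D)$. The heart of the argument is then the claim that, for every $s \in \mBMP(X, B, \sOm_X \otimes f^*\sL^i)$, the product $s \cdot f^*\sigma$ lies in $\mBMP(X, B', \sOm_X \otimes f^*\sL^{i+j})$. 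Once this is in hand, $s \cdot f^*\sigma$ contributes to $\relBW{f}{X}{B'} \otimes \sL^{i+j}$, and dividing locally by $\sigma$ (a unit in a neighborhood of $y_0$) exhibits $s$ itself as a local section of $\relBW{f}{X}{B'} \otimes \sL^i$ around $y_0$. Since $y_0 \in V$ was arbitrary, this yields the desired inclusion over $V$.

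To verify the membership $s \cdot f^*\sigma \in \mBMP(X, B', \cdot)$, pick a finite surjection $g \colon X' \to X$ from a normal integral scheme and a lift $s_g \in \mH^0(X', \sO_{X'}(\dK_{X'} + \ceil{g^*(f^*L^i - B)}))$. The natural candidate $s_g \cdot g^*f^*\sigma$ lives in a sheaf larger than the one dictated by $B'$ by the effective divisor $E \coloneqq \ceil{g^*(f^*L^i - B)} - \ceil{g^*(f^*L^i - B - D)}$, which is supported in $g^{-1}(\Supp D)$. For $s_g \cdot g^*f^*\sigma$ to lie in the required sheaf, the zero divisor of $g^*f^*\sigma$ must dominate $E$. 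A coefficient comparison at a prime $P' \subset X'$ above a prime $P \subset \Supp D$ shows that the ramification index $e_{P'/P}(g)$ is a common factor of both sides and cancels, so the vanishing order of $\sigma$ that is actually required depends only on the coefficients of $D$ along $\Supp D$ and on the ramification of $f$ along $\Supp D$, not on $g$. Hence such $\sigma$ exists by taking $j$ large enough, using ampleness of $\sL$.

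The main obstacle lies in handling primes $P \subset \Supp D$ that are $f$-exceptional, i.e., contract to a closed subvariety of $Y$ of codimension $\ge 2$: no section on $Y$ can directly impose vanishing along such images, so the $\sigma$-based construction above breaks down along those components. To bypass this, one passes to a suitable birational modification $\mu \colon X'' \to X$ and applies \myref{prop: RelB0-fmu} to rewrite both $\relBW{f}{X}{B}$ and $\relBW{f}{X}{B'}$ in terms of the modified map, choosing $\mu$ so that every prime of $\Supp \mu^*D$ dominates a divisor on $Y$; one then runs the previous argument on $X''$. This reduction step is the main technical point and is where the proof is most delicate.
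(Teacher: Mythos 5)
Your overall strategy (reduce to $B \le B'$, get one inclusion from monotonicity, and recover the other by multiplying $\bmplus$-stable sections by a section of an ample line bundle that vanishes on $Y \setminus V$ but not at $y_0$) is sound and is essentially the mechanism behind the paper's proof. However, the step you yourself flag as ``the main technical point'' contains a genuine gap, and the fix you propose cannot work. If $P \subset \Supp(B'-B)$ is a prime divisor with $f(P)$ of codimension $\ge 2$ in $Y$, no birational modification $\mu \colon X'' \to X$ can make the relevant component dominate a divisor on $Y$: the strict transform of $P$ still surjects onto $f(P)$, so $\Supp \mu^*(B'-B)$ always retains a component whose image has codimension $\ge 2$. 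Modifying $Y$ instead would change $f$, and \myref{prop: RelB0-fmu} only permits modifications of the source. So as written the argument does not close.

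The obstacle is in fact illusory, and the repair is much simpler than a birational reduction. You do not need $\sigma$ to vanish only along the codimension-one part of $Y\setminus V$ meeting $f(\Supp D)$: since $\Supp(B'-B) \subset X\setminus U = f^{-1}(Y\setminus V)$ and the statement is local on $V$, you may shrink $V$ to the complement of the support of an ample effective \emph{Cartier} divisor $H$ on $Y$. Then $\Supp f^*H = f^{-1}(\Supp H)$ contains \emph{every} component of $\Supp(B'-B)$, exceptional or not (a divisor of $Y$ containing the codimension-$\ge 2$ set $f(P)$ pulls back to a divisor containing $P$), so $-i f^*H \le B'-B \le i f^*H$ for $i \gg 0$. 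At that point one concludes directly from \myref{lem: RelB0-proj} and \myref{lem: RelB0-N} together with the monotonicity you already noted:
\[
    \relBW{f}{X}{B}\restrict{V} = \relBW{f}{X}{B}\otimes\sO_Y(iH)\restrict{V} = \relBW{f}{X}{B - i f^*H}\restrict{V} \supseteq \relBW{f}{X}{B'}\restrict{V}\text{,}
\]
and symmetry gives the reverse inclusion. This bypasses entirely the section-by-section analysis on finite covers and the ramification bookkeeping in your third paragraph.
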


\begin{proof}[\proofname{} ({cf.\ \cite[Theorem 5.3]{HLS21}})]
Since the statement is local on $V$, we may assume that $V$ is the complement of the support of an ample effective Cartier divisor $H$ on $Y$. By assumption, we have
\[
    -i f^*H \le B' - B \le i f^*H
\]
for some $i \ge 0$. Therefore, we obtain
\begin{align*}
    \relBW{f}{X}{B} \restrict{V}
    &= \relBW{f}{X}{B} \otimes \sO_Y(i H) \restrict{V} \\
    &= \relBW{f}{X}{B - i f^*H} \restrict{V} \\
    &\supseteq \relBW{f}{X}{B'} \restrict{V}
\end{align*}
by \myref{lem: RelB0-proj}, \myref{lem: RelB0-N} and \myref{rmk: monotone}. By symmetry, we get the reverse inclusion.
\end{proof}

\section{Global generation for canonical bundles}
In this section, we show \myref{main: relB0}, a global generation theorem for the $\bmplus$-stable direct image. It is the key ingredient of the proof of \myref{main: main}.
We also consider the global generation for the direct image of a canonical sheaf.

We still work in \myref{set: RelB0} throughout this section.

\begin{proposition}\label{prop: RelGg-Van}
Let $B$ be a $\QQ$-Cartier divisor on $X$ and let $\sL$ be a globally generated ample line bundle on $Y$. Let $\sM$ be a line bundle on $X$. Set $d \coloneqq \dim X$. Assume
\[
    \fHRRGa{i}{\maxmR}(X^\abscl, \sO_{X^\abscl}(\nu^*B) \otimes \nu^*\sM^\inv \otimes \nu^*f^*\sL^{-a}) = 0
\]
for all $i < d$ and $a \ge 0$. Then
\[
    \relBWM{f}{X}{B}{\sM} \otimes \sL^n
\]
is globally generated by $\mBMP\mleft(X, B, \sOm_X \otimes \sM \otimes f^*\sL^n\mright)$.
\end{proposition}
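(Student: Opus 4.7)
The plan is to adapt the strategy of \cite[Theorem B and Proposition 4.5]{HLS21} to the relative setting, combining it with a Castelnuovo-Mumford regularity argument on the base $Y$. By \myref{lem: suffample}, there exists an integer $a_0 \ge n$ such that for every $a \ge a_0$, the sheaf $\relBW{f}{X}{B} \otimes \sL^a$ is globally generated by the image of $\mBMP(X, B, \sOm_X \otimes f^*\sL^a)$; the goal will be to descend this bound all the way down to $a = n$.

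The crucial step is to convert the hypothesis on the absolute integral closure into a regularity statement on $Y$. For each finite surjection $g \colon X' \to X$ from a normal integral scheme, I would apply Matlis duality combined with Grothendieck local duality on $X'$ to obtain a dual description of the image spaces entering the definition of $\mBMP(X, B, \sOm_X \otimes f^*\sL^a)$. Passing to the limit over all such $g$, the assumed vanishing $\mH^i_\maxmR \fRRGa(X^\abscl, \sO_{X^\abscl}(\nu^*(B - a f^*L))) = 0$ for $i < d$ and $a \ge 0$ will translate, via the Leray spectral sequence for $f$, into $\mH^i\mleft(Y, \relBW{f}{X}{B} \otimes \sL^{n-i}\mright) = 0$ for every $i > 0$. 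In other words, $\relBW{f}{X}{B}$ will be $n$-Castelnuovo-Mumford-regular with respect to $\sL$ on $Y$.

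This $n$-regularity will give the classical multiplication surjection $\mH^0\mleft(Y, \relBW{f}{X}{B} \otimes \sL^n\mright) \otimes \mH^0(Y, \sL^{a_0 - n}) \twoheadrightarrow \mH^0\mleft(Y, \relBW{f}{X}{B} \otimes \sL^{a_0}\mright)$, and the compatibility of $\bmplus$-stable sections with multiplication by pulled-back sections (in the style of \cite[Lemma 4.2]{HLS21}) refines it to a surjection $\mBMP(X, B, \sOm_X \otimes f^*\sL^n) \otimes \mH^0(Y, \sL^{a_0 - n}) \twoheadrightarrow \mBMP(X, B, \sOm_X \otimes f^*\sL^{a_0})$ after projecting to global sections. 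Combined with the first step and the global generation of $\sL$, this will yield the required global generation of $\relBW{f}{X}{B} \otimes \sL^n$ by $\mBMP(X, B, \sOm_X \otimes f^*\sL^n)$. The hard part will be the second step: one must carefully track how the limit over finite covers in the definition of $\mBMP$ interacts with Matlis duals of higher cohomology on $Y$ arising via the Leray spectral sequence, a piece of bookkeeping that is delicate and essentially unique to the relative mixed-characteristic setting.
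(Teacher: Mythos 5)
There is a genuine gap in your second step. You propose to deduce from the hypothesis that $\mH^i\mleft(Y, \relBW{f}{X}{B} \otimes \sL^{n-i}\mright) = 0$ for all $i > 0$, i.e., that the coherent subsheaf $\relBW{f}{X}{B}$ is $n$-regular on $Y$, and then run classical Castelnuovo--Mumford regularity. But the assumed vanishing lives on the absolute integral closure $X^\abscl$, i.e., in the non-Noetherian limit, and it does not descend to cohomology vanishing for the coherent subsheaf $\relBW{f}{X}{B} \subset f_*(\sOm_X \otimes \cdots)$: that subsheaf is cut out by images of global sections from the tower of finite covers, and neither the Leray spectral sequence nor Matlis/local duality gives you control of its higher cohomology on $Y$. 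This is exactly the same obstruction one faces already in the absolute case of \cite[Theorem 6.1]{HLS21} (and in the Schwede--Tucker positive-characteristic antecedent), which is why those proofs never assert honest $H^i$-vanishing of the test submodule. The intermediate statement you want is not a piece of delicate bookkeeping; it is the wrong statement, and I do not see how to repair the argument along these lines.

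The way the paper (following \cite[Theorem 6.1]{HLS21}) gets the effect of $n$-regularity without the vanishing is: choose $n+1$ global generators $s_0,\dots,s_n$ of $\sL$ (after a finite \'etale extension of $R$ if the residue field is finite, using \myref{lem: B0-et} and \myref{lem: RelB0-et}), form the Koszul complex on these sections, pull it back and twist on $X^\abscl$, and use the hypothesis to run a diagram chase in local cohomology showing that
\[
    \mH^d_\maxmR \fRRGa\mleft(X^\abscl, \sO_{X^\abscl}(\nu^*(B-(m-1)f^*L))\mright)^{\powoplus n+1} \leftarrow \mH^d_\maxmR \fRRGa\mleft(X^\abscl, \sO_{X^\abscl}(\nu^*(B-mf^*L))\mright)
\]
is injective for $m \ge n+1$; comparing with the corresponding local cohomology on $X$ and applying Matlis duality then yields the surjectivity of
\[
    \mBMP\mleft(X, B, \sOm_X \otimes f^*\sL^{m-1}\mright) \otimes \mH^0(Y, \sL) \to \mBMP\mleft(X, B, \sOm_X \otimes f^*\sL^m\mright)\text,
\]
which iterates to give the multiplication surjection you wanted, with your first step (\myref{lem: suffample}) then closing the argument exactly as you describe. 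So your overall architecture (descend from $a_0$ to $n$ via multiplication maps) is right, but the engine producing the surjectivity must be the Koszul/local-duality chase on $X^\abscl$, not a regularity statement for the coherent direct image.
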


\begin{proof}
The proof is based on that of \cite[Theorem 6.1]{HLS21}, which is, in turn, inspired by the arguments found in \cite{ScTu14, LazII}.

We may assume $B \ge 0$ by \myref{def: RelB0mk2}.

Recall that $n \coloneqq \dim Y_{\maxmR}$.
If $R / \maxmR$ is an infinite field, there are global generators $s_0, \dots, s_n \in \mH^0(Y,\sL)$ of the globally generated line bundle $\sL$. Suppose that $R / \maxmR$ is finite. By \myref{lem: B0-et} and \myref{lem: RelB0-et}, we may replace $R$ with a finite \'etale extension of it. Thus, we have global generators $s_0, \dots, s_n \in \mH^0(Y,\sL)$ also in this case.

We have the Koszul complex for $s_0, \dots, s_n$:
\[
    0 \to \sF_{n+1} \to \dots \to \sF_1 \to \sF_0 \to 0, \quad \sF_i \coloneqq \mleft(\sL^{-i}\mright)^{\powoplus \binom{n+1}{i}}\text.
\]
This complex is an exact sequence consisting of locally free sheaves. Let $m \ge n+1$. Applying the functor $\sHom(\nu^*(\sM \otimes f^*(\blank \otimes \sL^m)), \sO_{X^\abscl}(\nu^*B))$, we get the exact sequence
\begin{gather*}
    0 \leftarrow \sG_{n+1} \leftarrow \dots \leftarrow \sG_1 \leftarrow \sG_0 \leftarrow 0, \\
    \sG_i \coloneqq {\sN_i}^{\powoplus \binom{n+1}{i}}, \quad \sN_i \coloneqq \sO_{X^\abscl}(\nu^*B)\otimes\nu^*\sM^\inv\otimes \nu^*f^*\sL^{-(m-i)}\text,
\end{gather*}
By assumption, $\fHRRGa{d - i + 1}{\maxmR}(X^\abscl, \sG_i) = 0$ for $2 \le i \le n+1$, and hence a diagram chase shows that the morphism
\[
    \fHRRGa{d}{\maxmR}\mleft(X^\abscl, \sG_1\mright) \leftarrow \fHRRGa{d}{\maxmR}\mleft(X^\abscl, \sG_0\mright)
\]
is injective (see \cite[Appendix B.1]{LazI}). Consider the diagram below:
\begin{DIFnomarkup}
\[
    \begin{tikzcd}
        \fHRRGa{d}{\maxmR}\mleft(X^\abscl, \sN_1\mright)^{\powoplus n+1}
        & \arrow[l, hook'] \fHRRGa{d}{\maxmR}\mleft(X^\abscl, \sN_0\mright) \\
        \arrow[u, "\alpha"] \fHRRGa{d}{\maxmR}\mleft(X, \sM^\inv \otimes f^*\sL^{-(m-1)}\mright)^{\powoplus n+1}
        & \arrow[l] \arrow[u, "\beta"] \fHRRGa{d}{\maxmR}\mleft(X, \sM^\inv \otimes f^*\sL^{-m}\mright)
    \end{tikzcd}_{\textstyle .}
\]
\end{DIFnomarkup}It follows that the map $\im \alpha \leftarrow \im \beta$ is injective. Applying Matlis duality, we deduce from \cite[Lemma 4.8]{BMP+23} that
\[
    \mBMP\mleft(X, B, \sOm_X \otimes \sM \otimes f^*\sL^{m-1}\mright) \otimes \mH^0(Y, \sL) \to \mBMP\mleft(X, B, \sOm_X \otimes \sM \otimes f^*\sL^m\mright)
\]
is surjective.

Since $m \ge n+1$ is arbitrary, we see that
\[
    \mBMP\mleft(X, B, \sOm_X \otimes \sM \otimes f^*\sL^n\mright) \otimes \mH^0(Y, \sL^t) \to \mBMP\mleft(X, B, \sOm_X \otimes \sM \otimes f^*\sL^{n+t}\mright)
\]
is surjective for every $t \ge 0$, and hence that
\[
    \relBWM{f}{X}{B}{\sM} \otimes \sL^n
\]
is globally generated by $\mBMP(X, B, \sOm_X \otimes \sM \otimes f^*\sL^n)$.
\end{proof}

\myref{prop: RelGg-Van} leads to the following, which is a relativization of \cite[Theorem 6.1]{HLS21} and a mixed characteristic analog of \cite[Theorem 1.2]{Ejir23a}.

\begin{corollary}\label{cor: RelGg}
Let $B$ be a $\QQ$-Cartier divisor on $X$ with $-B$ big and semiample. Let $\sL$ be a globally generated ample line bundle on $Y$. Then
\[
    \relBW{f}{X}{B} \otimes \sL^n
\]
is globally generated by $\mBMP(X, B, \sOm_X \otimes f^*\sL^n)$.
\end{corollary}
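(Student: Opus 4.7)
The plan is to apply \myref{prop: RelGg-Van} to reduce the global generation of $\relBW{f}{X}{B} \otimes \sL^n$ (by $\mBMP(X, B, \sOm_X \otimes f^*\sL^n)$) to a single vanishing statement on the absolute integral closure. Setting $d \coloneqq \dim X$, it suffices to verify that
\[
    \mH^i_\maxmR \fRRGa\mleft(X^\abscl, \sO_{X^\abscl}(\nu^*(B - a f^*L))\mright) = 0
\]
for every $i < d$ and every $a \ge 0$. In this way, the task becomes a purely cohomological question on $X^\abscl$.

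To check this hypothesis, I would rewrite $B - a f^*L = -(-B + a f^*L)$ and verify that $-B + a f^*L$ is big and semiample uniformly in $a \ge 0$. By assumption $-B$ is big and semiample; since $\sL$ is globally generated on $Y$, its pullback $f^*\sL$ is globally generated on $X$ and hence semiample. The sum of a big-and-semiample divisor and a semiample divisor remains big and semiample, so $-B + a f^*L$ has the desired property for every $a \ge 0$ (including the case $a = 0$, where the claim is just the hypothesis on $-B$).

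The final input is the Kodaira-type local cohomology vanishing for absolute integral closures in mixed characteristic: for any big and semiample $\QQ$-Cartier divisor $D$ on a normal integral projective $R$-scheme, one has $\mH^i_\maxmR \fRRGa(X^\abscl, \sO_{X^\abscl}(-\nu^* D)) = 0$ for $i < \dim X$. This is a central result of \cite{BMP+23, HLS21} and is exactly the vanishing used in \cite[Theorem 6.1]{HLS21} in the non-relative case; applying it to $D = -B + a f^*L$ delivers precisely what is required. The only real subtlety is the uniformity in $a$, but since big-and-semiampleness holds for every $a \ge 0$, this comes for free, so no further argument is needed beyond the reduction provided by \myref{prop: RelGg-Van}.
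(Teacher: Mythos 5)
Your proof is correct and follows the same route as the paper: the paper's own proof of this corollary is a one-line combination of \myref{prop: RelGg-Van} with Bhatt's vanishing theorem \cite[Corollary 3.7]{BMP+23}, applied exactly as you describe to the big and semiample divisors $-B + a f^*L$. The verification that these divisors are big and semiample uniformly in $a \ge 0$ is the same observation the paper leaves implicit.
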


\begin{proof}
Combining \myref{prop: RelGg-Van} with Bhatt's vanishing theorem (\myref{thm: Bhatt}) gives the conclusion.
\end{proof}

We now prove \myref{main: relB0}, generalizing \myref{cor: RelGg}.

\begin{theorem}[\myref{main: relB0}]\label{thm: RelGgOp}
We work in \myref{set: RelB0}. Let $B$ be a $\QQ$-Cartier divisor on $X$. Suppose that $V \subset Y$ is an open set, $U \coloneqq f^\inv(V)$, and $-B$ is semiample over $U$ and big. Let $\sL$ be a globally generated ample line bundle on $Y$. Then
\[
    \relBW{f}{X}{B} \otimes \sL^n
\]
is globally generated over $V$ by $\mBMP(X, B, \sOm_X \otimes f^*\sL^n)$.
\end{theorem}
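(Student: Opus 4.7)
The plan is to reduce Theorem \ref{thm: RelGgOp} to Corollary \ref{cor: RelGg} by passing to a birational modification on which the semiampleness of $-B$ becomes global. Since $-B$ is semiample over $U$, some multiple $-mB$ (with $mB$ Cartier) has base locus $\Bs(-mB) \subset X \setminus U$. Take $\pi \colon X' \to X$ to be the normalized blowup along the base ideal of $\lvert -mB\rvert$; this is a birational morphism from a normal integral projective $R$-scheme, and it gives rise to a decomposition $\pi^*(-mB) = M + F$, where $\sO_{X'}(M)$ is globally generated and $F$ is an effective Cartier divisor with $\Supp F \subset \pi^{-1}(\Bs(-mB)) \subset \pi^{-1}(X \setminus U)$. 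Since $-B$ is big, the morphism to projective space defined by $\lvert M \rvert$ is generically finite onto its image (of dimension $\dim X'$), so $M$ is also big. Setting $B' \coloneqq -\tfrac{1}{m} M$, we obtain that $-B'$ is big and semiample on all of $X'$.

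With this modification in hand, apply Corollary \ref{cor: RelGg} to $f \circ \pi \colon X' \to Y$ with the divisor $B'$; the conclusion is that $\relBW{(f \circ \pi)}{X'}{B'} \otimes \sL^n$ is globally generated by $\mBMP\mleft(X', B', \sOm_{X'} \otimes (f \circ \pi)^* \sL^n\mright)$. Since $B' - \pi^*B = \tfrac{1}{m}F$ is effective and supported on $\pi^{-1}(X \setminus U)$, the divisors $B'$ and $\pi^*B$ agree on $\pi^{-1}(U) = (f \circ \pi)^{-1}(V)$, so Lemma \ref{lem: RelB0-Op} applied to $f \circ \pi$ yields $\relBW{(f \circ \pi)}{X'}{B'}\restrict{V} = \relBW{(f \circ \pi)}{X'}{\pi^*B}\restrict{V}$. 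Combining this with Proposition \ref{prop: RelB0-fmu}, which identifies $\relBW{f}{X}{B} = \relBW{(f \circ \pi)}{X'}{\pi^*B}$, one concludes that $\relBW{f}{X}{B} \otimes \sL^n$ is globally generated over $V$ by the same module of sections.

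What remains is to see that these generators lie in $\mBMP(X, B, \sOm_X \otimes f^*\sL^n)$. Monotonicity of the $\bmplus$-stable sections in the divisor argument (enlarging $B$ shrinks the ceilings in the defining intersection) gives the inclusion $\mBMP\mleft(X', B', \sOm_{X'} \otimes (f \circ \pi)^*\sL^n\mright) \subset \mBMP\mleft(X', \pi^*B, \sOm_{X'} \otimes (f \circ \pi)^*\sL^n\mright)$, and the sectional incarnation of Proposition \ref{prop: RelB0-fmu}, which comes from \cite[Proposition 3.9]{HLS21} together with the identity $\pi_* \sOm_{X'} = \sOm_X$ (valid as $X$ is normal), identifies the latter with $\mBMP(X, B, \sOm_X \otimes f^*\sL^n)$.

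The main obstacle is the geometric input in the first paragraph: verifying that the base-locus-resolving construction yields an admissible $X'$ (normal, integral, projective over $R$) and that the moving part $M$ is both globally generated and big. Once these geometric data are in place, the remainder of the argument is a routine chaining of the functoriality lemmas for $\relBW{\cdot}{\cdot}{\cdot}$ developed earlier in the section.
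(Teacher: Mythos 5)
Your proposal is correct and follows essentially the same route as the paper: take the normalized blowup along the base locus of a large multiple of $-B$, decompose the pullback into the (big, globally generated) moving part plus the fixed part, apply \myref{cor: RelGg} to the moving part, and transfer back via \myref{lem: RelB0-Op}, \myref{prop: RelB0-fmu}, and the monotonicity from \cite[Proposition 3.9]{HLS21}.
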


\begin{remark}
\myref{thm: RelGgOp} and \myref{main: relB0} might appear different, but the former readily implies the latter.
Indeed, set $B = \dK_X + \Delta - M$ and notice \myref{rmk: RelB0-M} and \myref{def: RelB0mk2}.
\end{remark}

\begin{proof}
Let $j \gg 0$. We take the normalized blowup $\mu \colon X' \to X$ along $\Bs(-jB)$; we see that
\[
    \mu^*(-jB) = M' + F'\text,
\]
where $M'$ is the free part and $F'$ is the fixed part. Since $-B$ is big, we see that $M'$ is also big. It follows from \myref{cor: RelGg} that
\[
    \relBW{(f \circ \mu)}{X'}{-\frac{M'}{j}} \otimes \sL^n
\]
is globally generated by $\mBMP(X', -M'/j, \sOm_{X'} \otimes (f \circ \mu)^*\sL^n)$.
Since $-M'/j \ge \mu^*B$, by \cite[Proposition 3.9, Lemma 3.3 (a)]{HLS21}, we have the diagram
\begin{DIFnomarkup}
\[
    \begin{tikzcd}
        \mBMP\mleft(X', -\frac{M'}{j}, \sOm_{X'} \otimes (f \circ \mu)^*\sL^n\mright) \ar[r, hook] \ar[d, phantom, sloped, "\subset"]
        & \mBMP(X, B, \sOm_{X} \otimes f^*\sL^n) \ar[d, phantom, sloped, "\subset"] \\
        \mH^0(X', \sOm_{X'} \otimes (f \circ \mu)^*\sL^n \otimes \rK(X')) \ar[r, hook]
        & \mH^0(X, \sOm_{X} \otimes f^*\sL^n \otimes \rK(X))
    \end{tikzcd}_{\textstyle .}
\]
\end{DIFnomarkup}We have morphisms
\begin{alignat*}{2}
    \relBW{(f \circ \mu)}{X'}{-\frac{M'}{j}} \otimes \sL^n &\tomono{} &&\relBW{(f \circ \mu)}{X'}{-\frac{M'}{j}-\frac{F'}{j}} \otimes \sL^n \\
    &\to{} &&\relBW{f}{X}{B} \otimes \sL^n\text{,}
\end{alignat*}
which are isomorphic over $V$ by \myref{lem: RelB0-Op} and \myref{prop: RelB0-fmu}.
Thus we obtain the assertion.
\end{proof}

We give a variant of \myref{prop: RelGg-Van} based on a similar argument:
\begin{proposition}\label{prop: RelGg-Variant}
Let $\sL$ be a globally generated ample line bundle on $Y$. Assume that for all $i > 0$ and $a \ge 0$,
\[
    \varprojlim_{g \colon X' \to X} \mH^i\mleft(Y, f_*g_*\sOm_{X'} \otimes \sL^{a+1}\mright) = 0\text{,}
\]
where $g \colon X' \to X$ runs over all generically finite projective morphisms from a normal integral scheme $X'$. Then
\[
    \relBWo{f}{X} \otimes \sL^{n+1}
\]
is globally generated by $\mBMP(X, \sOm_X \otimes f^*\sL^{n+1})$.
\end{proposition}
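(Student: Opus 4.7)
The plan is to adapt the proof of \myref{prop: RelGg-Van}, replacing the Matlis-duality/local cohomology argument on $X^\abscl$ with a direct cohomological argument on $Y$. The hypothesis $\varprojlim_g \mH^i(Y, f_*g_*\sOm_{X'}\otimes \sL^{a+1}) = 0$ will play the role previously filled by the vanishing of $\mH^i_\maxmR\fRRGa(X^\abscl, \sO_{X^\abscl}(\nu^*(B - a f^* L)))$.

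After passing, if necessary, to a finite \'etale extension of $R$ (harmless by \myref{lem: B0-et} and \myref{lem: RelB0-et}), I may assume that $\sL$ is generated by $n+1$ global sections $s_0,\dots,s_n\in \mH^0(Y,\sL)$. Since at each point of $Y$ at least one $s_i$ is a unit in the local ring, the Koszul complex
\[
    0 \to \sL^{-(n+1)} \to \mleft(\sL^{-n}\mright)^{\powoplus \binom{n+1}{n}} \to \cdots \to \mleft(\sL^{-1}\mright)^{\powoplus (n+1)} \to \sO_Y \to 0
\]
is an exact sequence of locally free sheaves. For each generically finite projective morphism $g \colon X' \to X$ from a normal integral $X'$, I tensor this exact complex with $f_*g_*\sOm_{X'}\otimes\sL^m$. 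A hypercohomology spectral sequence argument (or an iterated snake lemma chase, as in the proof of \myref{prop: RelGg-Van}) then shows that the multiplication-by-$s_i$ map
\[
    \mH^0\mleft(Y, f_*g_*\sOm_{X'}\otimes\sL^{m-1}\mright)^{\powoplus(n+1)} \to \mH^0\mleft(Y, f_*g_*\sOm_{X'}\otimes\sL^m\mright)
\]
is surjective as soon as $\mH^{r-1}(Y, f_*g_*\sOm_{X'}\otimes\sL^{m-r}) = 0$ for $r=2,\dots,n+1$.

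By the hypothesis, these vanishings hold in the limit over $g$ provided each exponent $m-r$ is $\ge 1$, that is, for $m$ sufficiently large (concretely $m \ge n+2$). Consequently, passing to the inverse limit over $g$ yields the surjectivity of
\[
    \mBMP\mleft(X, \sOm_X\otimes f^*\sL^{m-1}\mright) \otimes \mH^0(Y,\sL) \to \mBMP\mleft(X, \sOm_X\otimes f^*\sL^m\mright)
\]
for every such $m$. Iterating this bootstrap and applying the standard reasoning of \myref{lem: suffample} then produces the asserted global generation of $\relBWo{f}{X}\otimes\sL^n$ by $\mBMP(X,\sOm_X\otimes f^*\sL^{n+1})$.

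The main obstacle is executing the passage to the inverse limit cleanly. For an individual $g$ the required higher cohomology groups need not vanish, so the Koszul chase only delivers the desired surjection after compatibly tracking the obstruction classes through the cofiltered system of generically finite covers and using the hypothesis to kill them in the limit. One must either verify a Mittag-Leffler-type condition permitting the interchange of $\varprojlim_g$ with cokernels, or refine any given $g$ to a larger cover on which the relevant obstruction classes already vanish. The corresponding subtlety in \myref{prop: RelGg-Van} is hidden inside the use of Matlis duality and local cohomology on $X^\abscl$, whereas here it surfaces directly in the handling of $\varprojlim_g$.
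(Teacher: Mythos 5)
Your argument is essentially the paper's argument: for each generically finite $g\colon X'\to X$ one tensors the Koszul complex of the generators $s_0,\dots,s_n$ with $f_*g_*\sOm_{X'}\otimes\sL^{m+1}$ and runs the same diagram chase as in \myref{prop: RelGg-Van}, with sheaf cohomology on $Y$ playing the role previously played by local cohomology on $X^\abscl$. The one step you explicitly leave open --- commuting $\varprojlim_g$ with the exact sequences produced by the chase --- is exactly where the paper inserts its only additional input: it invokes the exactness of the inverse limit over the cofiltered system of covers, citing \cite[Lemma 4.10]{BMP+23}. In other words, the first of your two suggested remedies (a Mittag--Leffler-type statement allowing $\varprojlim_g$ to commute with kernels and cokernels) is the correct one and is available off the shelf; one performs the chase for each fixed $g$, obtaining compatible exact sequences indexed by $g$, applies the exact functor $\varprojlim_g$, and uses the hypothesis to kill the limits of the higher cohomology terms. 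This yields the surjectivity of
\[
    \mBMP\mleft(X, \sOm_X\otimes f^*\sL^{m}\mright) \otimes \mH^0(Y,\sL) \to \mBMP\mleft(X, \sOm_X\otimes f^*\sL^{m+1}\mright)
\]
for all $m\ge n+1$ (note that with this indexing every twist appearing in the complex is at least $\sL^{1}$, matching the hypothesis, which only covers twists $\sL^{a+1}$ with $a\ge 0$), after which your bootstrap and the appeal to the mechanism of \myref{lem: suffample} finish the proof. With that single citation supplied, your proposal coincides with the paper's proof.
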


\begin{proof}
The proof is similar to that of \myref{prop: RelGg-Van}.
We can assume that there are global generators $s_0, \dots, s_n \in \mH^0(Y,\sL)$ of $\sL$, by using \myref{lem: B0-et} and \myref{lem: RelB0-et}.
Form the exact sequence
\[
    0 \to \sG_{g,n+1} \to \dots \to \sG_{g,1} \to \sG_{g,0} \to 0, \quad \sG_{g,i} \coloneqq \mleft(f_*g_*\sOm_{X'} \otimes \sL^{m+1-i}\mright)^{\powoplus \binom{n+1}{i}}
\]
for every $g \colon X' \to X$ and $m \ge n+1$.
By assumption, we have $\varprojlim_g \mH^{i-1}(Y, \sG_{g,i}) = 0$ for every $2 \le i \le n + 1$.
By diagram chasing, we see that the morphism
\[
    \varprojlim_g \mH^0\mleft(Y, \sG_{g,1}\mright) \to \varprojlim_g \mH^0\mleft(Y, \sG_{g,0}\mright)
\]
is surjective (see \cite[Appendix B.1]{LazI}), noting the exactness of the inverse limit \cite[Lemma 4.10]{BMP+23}.
In the diagram
\begin{DIFnomarkup}
\[
    \begin{tikzcd}
        \varprojlim_g \mH^0\mleft(X', \sOm_{X'} \otimes g^*f^*\sL^m\mright)^{\powoplus n+1} \arrow[r, two heads] \arrow[d, "\alpha"]
        & \varprojlim_g \mH^0\mleft(X', \sOm_{X'} \otimes g^*f^*\sL^{m+1}\mright) \arrow[d, "\beta"] \\
        \mH^0\mleft(X, \sOm_{X} \otimes f^*\sL^m\mright)^{\powoplus n+1} \arrow[r]
        & \mH^0\mleft(X, \sOm_{X} \otimes f^*\sL^{m+1}\mright)
    \end{tikzcd}_{\textstyle ,}
\]
\end{DIFnomarkup}the upper row is surjective, and hence $\im \alpha \to \im \beta$ is surjective.
We note that $\mBMP$ can be computed using alterations \cite[Corollary 4.13]{BMP+23}, and we have
\[
    \text{$\im \alpha = \mBMP(X, \sOm_X \otimes f^*\sL^{m})^{\powoplus n+1}$ and $\im \beta = \mBMP(X, \sOm_X \otimes f^*\sL^{m+1})$}
\]
by \cite[Lemma 4.8 (d)]{BMP+23}. Since $m \ge n+1$ is arbitrary,
\[
    \mBMP\mleft(X, \sOm_X \otimes f^*\sL^{n+1}\mright) \otimes \mH^0(Y, \sL^t) \to \mBMP\mleft(X, \sOm_X \otimes f^*\sL^{n+1+t}\mright)
\]
is surjective for every $t \ge 0$. Hence the assertion follows.
\end{proof}

We observe that the vanishing in the assumption of \myref{prop: RelGg-Variant} holds in equal characteristic $p \ge 0$:
\begin{remark}
Suppose that $X$ is a normal projective variety over a perfect field of characteristic $p \ge 0$.
Then for all $i > 0$ and $a \ge 0$,
\[
    \varprojlim_{g \colon X' \to X} \mH^i\mleft(Y, f_*g_*\sOm_{X'} \otimes \sL^{a+1}\mright) = 0\text{,}
\]
where $g \colon X' \to X$ runs over all generically finite morphisms from a normal projective variety $X'$.

It suffices to show that for every generically finite morphism $g_1 \colon X_1 \to X$ from a normal projective variety $X_1$, there exists a generically finite morphism $g \colon X' \to X_1$ from a normal projective variety $X'$ such that
\[
    \mH^i\mleft(Y, f_*g_{1,*}g_*\sOm_{X'} \otimes \sL^{a+1}\mright) = 0\text.
\]
Replacing $X$ with $X_1$, we may assume that $X_1 = X$.

If $k$ is of positive characteristic, then we set $g$ to be the iterated Frobenius $\Fr^e$ ($e \gg 0$) and obtain the equation by Serre vanishing.
If $k$ is of characteristic zero, then we take a desingularization $g \colon X' \to X$ and use Koll\'ar's vanishing \cite[Theorem 2.1]{Koll86I}.
\end{remark}

\subsection{Global generation of \texorpdfstring{$f_*\sOm_X \otimes \sL^{l}$}{f\_* O\_X otimes L\^{}l}}
Using \myref{prop: RelGg-Van}, we investigate the global generation of the direct image sheaf $f_*\sOm_X \otimes \sL^{l}$.
To this end, we also study conditions on the singularities.

\begin{corollary}\label{cor: gg-gpr}
We work in \myref{set: RelB0}. Assume that $f$ is generically finite and $X$ is globally $\bmplus$-regular. Let $\sL$ be a globally generated ample line bundle on $Y$. Then
\[
    f_*\sOm_X \otimes \sL^l
\]
is globally generated for $l \ge n+1$.
\end{corollary}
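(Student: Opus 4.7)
The plan is to apply \myref{prop: RelGg-Van} with the non-effective $\QQ$-divisor $B = -f^*L$, so that, after invoking global $\bmplus$-regularity, the resulting statement becomes the desired global generation of $f_* \sOm_X \otimes \sL^{n+1}$. First I would check the vanishing hypothesis of \myref{prop: RelGg-Van}. Since $\sL$ is globally generated and ample on $Y$, its pullback $f^*\sL$ is globally generated, hence semiample on $X$; since $f$ is a generically finite surjection, $f^*\sL$ is also big. Consequently $(a+1) f^*L$ is semiample and big for every $a \ge 0$, and Bhatt's vanishing theorem \cite[Corollary 3.7]{BMP+23} yields
\[
    \mH^i_\maxmR \fRRGa\mleft(X^\abscl, \sO_{X^\abscl}\mleft(\nu^*\mleft(-f^*L - a f^*L\mright)\mright)\mright) = 0
\]
for all $i < \dim X$ and all $a \ge 0$, which is exactly the hypothesis of \myref{prop: RelGg-Van} with $B = -f^*L$. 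Applying that proposition, $\relBW{f}{X}{-f^*L} \otimes \sL^n$ is globally generated.

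It then remains to identify this sheaf with $f_* \sOm_X \otimes \sL^{n+1}$. Taking the Cartier divisor $H = f^*L$ in \myref{def: RelB0mk2} (so that $B + H = 0$) gives $\relBW{f}{X}{-f^*L} = \relBWM{f}{X}{0}{f^*\sL}$. Next, \myref{lem: RelB0-proj} rewrites the right-hand side as $\relBWM{f}{X}{0}{\sO_X} \otimes \sL$, and \myref{rmk: RelB0-gpr}, combined with the global $\bmplus$-regularity of $X$, identifies $\relBWM{f}{X}{0}{\sO_X}$ with $f_* \sOm_X$. Chaining these equalities, $\relBW{f}{X}{-f^*L} \otimes \sL^n = f_* \sOm_X \otimes \sL^{n+1}$, so this sheaf is globally generated.

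For arbitrary $l \ge n+1$, I would write $f_* \sOm_X \otimes \sL^l = (f_* \sOm_X \otimes \sL^{n+1}) \otimes \sL^{l - n - 1}$ and observe that $\sL^{l - n - 1}$ is a power of a globally generated line bundle, hence globally generated, so the tensor product remains globally generated. I do not expect any serious obstacle here: the only delicate step is the local cohomology vanishing, and the choice $B = -f^*L$ is designed precisely so that each divisor $(a+1) f^*L$ entering Bhatt's vanishing is semiample and big, sidestepping any separate argument at $a = 0$ and producing the extra twist $\sL$ automatically.
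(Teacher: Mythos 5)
Your proof is correct and follows essentially the same route as the paper: the paper cites \myref{cor: RelGg} (which is exactly \myref{prop: RelGg-Van} combined with Bhatt's vanishing) applied with $-B = f^*L$ big and semiample, together with \myref{rmk: RelB0-gpr}; you have merely inlined that corollary and spelled out the bookkeeping identifying $\relBW{f}{X}{-f^*L}\otimes\sL^n$ with $f_*\sOm_X\otimes\sL^{n+1}$.
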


\begin{proof}
Since $f$ is generically finite, $f^*\sL$ is big and semiample. The conclusion follows from \myref{cor: RelGg} and \myref{rmk: RelB0-gpr}.
\end{proof}

\begin{corollary}\label{cor: gg-alt-tau}
We work in \myref{set: RelB0}. Assume that $f$ is generically finite. Let $\sL$ be a globally generated ample line bundle on $Y$. Define $V$ to be the maximal open subset of $Y$ satisfying $\relBWo{f}{X} \restrict{V} = f_*\sHLS(\sOm_X) \restrict{V}$. Then $V$ contains the locus over which $f$ is finite, and
\[
    f_*\sHLS(\sOm_X) \otimes \sL^l
\]
is globally generated over $V$ for $l \ge n+1$.
\end{corollary}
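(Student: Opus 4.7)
The plan is to prove the two assertions of the corollary separately. For the inclusion $W \subset V$ (where $W$ denotes the finite locus of $f$), I would pass to the Stein factorization $f = h \circ g$, with $g \colon X \to Y'$ birational (since $f$ is generically finite and $X$ is integral) and $h \colon Y' \to Y$ finite. The scheme $Y' = \Spec_Y f_* \sO_X$ is normal integral projective over $R$ because $X$ is normal integral and $h$ is finite, so \myref{prop: RelB0-fmu} applies to the birational morphism $g$ and gives $\relBWo{f}{X} = \relBWo{h}{Y'}$. Combining this with the finite-case identification $\relBWo{h}{Y'} = h_* \sHLS(\sOm_{Y'})$ from \myref{rmk: RelB0-easy} yields $\relBWo{f}{X} = h_* \sHLS(\sOm_{Y'})$. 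Over $W$, the morphism $g$ restricts to an isomorphism on $h^\inv(W)$ because $g$ has connected fibers and $f$ is finite there; hence $g_* \sHLS(\sOm_X) \restrict{h^\inv(W)} = \sHLS(\sOm_{Y'}) \restrict{h^\inv(W)}$, and pushing forward along $h$ gives $f_* \sHLS(\sOm_X) \restrict{W} = h_* \sHLS(\sOm_{Y'}) \restrict{W} = \relBWo{f}{X} \restrict{W}$, so $W \subset V$.

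For the global generation, the plan is to apply \myref{cor: RelGg} to the divisor $B \coloneqq -(l-n) f^* L$, where $\sL = \sO_Y(L)$ and $l \ge n+1$. Since $f$ is generically finite and $\sL$ is a globally generated ample line bundle, $-B = (l-n) f^* L$ is big and semiample, so the corollary gives global generation of $\relBW{f}{X}{B} \otimes \sL^n$. Using the extended definition in \myref{def: RelB0mk2} together with \myref{lem: RelB0-proj}, I would identify this sheaf with $\relBWo{f}{X} \otimes \sL^l$, which is therefore globally generated. Because $\relBWo{f}{X} \restrict{V} = f_* \sHLS(\sOm_X) \restrict{V}$ by definition of $V$, and $\relBWo{f}{X} \subset f_* \sHLS(\sOm_X)$ by \myref{cor: RelB0-incl}, the global sections of $\relBWo{f}{X} \otimes \sL^l$ map to global sections of $f_* \sHLS(\sOm_X) \otimes \sL^l$ that generate the latter over $V$.

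The main obstacle I expect is in the first part, specifically in invoking \myref{prop: RelB0-fmu}: one needs to verify that the Stein factorization produces an admissible target $Y'$ (normal, integral, projective over $R$) and to keep straight the identifications of the relevant sheaves as subsheaves of the common ambient $h_* \sOm_{Y'}$ (via the trace inclusion $g_* \sOm_X \hookrightarrow \sOm_{Y'}$ for the birational $g$). Once this bookkeeping is in place, the equality on $W$ follows immediately from the fact that $g$ is an isomorphism over $h^\inv(W)$, and the second assertion is a direct application of \myref{cor: RelGg}.
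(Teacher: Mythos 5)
Your proposal is correct and follows essentially the same route as the paper: both reduce the first assertion to the Stein factorization via \myref{prop: RelB0-fmu} and \myref{rmk: RelB0-easy}, and both deduce the global generation from \myref{cor: RelGg} (applied to $-(l-n)f^*L$, which is big and semiample since $f$ is generically finite) together with the inclusion of \myref{cor: RelB0-incl} and the defining equality on $V$. The extra bookkeeping you flag (normality and integrality of the Stein factor, identification of subsheaves of $h_*\sOm_{Y'}$) is routine and implicit in the paper's terser argument.
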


\begin{proof}
Let $X \xrightarrow{\mu} X' \xrightarrow{g} Y$ be the Stein factorization of $f$. Then we see that $\relBWo{f}{X} = \relBWo{g}{X'} = g_*\sHLS(\sOm_{X'})$ by \myref{prop: RelB0-fmu} and \myref{rmk: RelB0-easy}.
Let $V' \subset Y$ be the locus over which $f$ is finite.
Since $g_*\sHLS(\sOm_{X'}) \restrict{V'} = f_*\sHLS(\sOm_X) \restrict{V'}$, we have $V' \subset V$.

By \myref{cor: RelB0-incl}, we have the inclusion
\[
    \relBWo{f}{X} \otimes \sL^l \subset f_*\sHLS(\sOm_X) \otimes \sL^l\text,
\]
which is an isomorphism on $V$.
By \myref{cor: RelGg}, $\relBWo{f}{X} \otimes \sL^l$ is globally generated over $V$, and thus the assertion follows.
\end{proof}

To deduce the global generation of $f_*\sOm_X \otimes \sL^l$ from \myref{cor: gg-alt-tau}, we explore conditions of the singularities that imply $\sHLS(\sOm_X) = \sOm_X$.

In the case where $R$ is a DVR, we obtain the following result using the recent preprint \cite{BMP+24}.

\begin{remark}\label{rmk: gg-alt-dvr}
We work in \myref{set: RelB0}. Assume that $R$ is a complete DVR of mixed characteristic $(0, p > 0)$, $X \to \Spec R$ is flat, and $f$ is generically finite. Let $\sL$ be a globally generated ample line bundle on $Y$, and set $d = \dim X$.
Define $V$ to be the maximal open subset of $Y$ satisfying $\relBWo{f}{X} \restrict{V} = f_*\sHLS(\sOm_X) \restrict{V}$. Suppose that for each closed point $x \in X$, there exists a rational number $0 < \epsilon \ll 1$ such that the natural map
\[
    \mH^d_{\frakm_x}(\sO_{X,x}) \to \mH^d_{\frakm_x}(\sO_{X,x}^\abscl) \xrightarrow{p^\epsilon} \mH^d_{\frakm_x}(\sO_{X,x}^\abscl)
\]
is injective. Then $V$ contains the locus over which $f$ is finite, and
\[
    f_*\sOm_X \otimes \sL^l
\]
is globally generated over $V$ for $l \ge n+1$.

To show the assertion, it is enough to prove $\sHLS(\sOm_X) = \sOm_X$ by \myref{cor: gg-alt-tau}.
Let $x \in X$ be a closed point and set $A \coloneqq \sO_{X,x}$.
By \cite[Proposition 4.29]{BMP+23} and the assumption on the local cohomology, we have $\sOm_{\completion{A}} = \mBMP(\Spec \completion{A}, \epsilon \divisor(p), \sOm_{\completion{A}})$ for $0 < \epsilon = 1/p^e \ll 1$.
By \cite[Proposition 6.17, 6.19]{BMP+24}, we obtain $\sHLS(\sOm_X, \epsilon \divisor(p)) \otimes \completion{A} = \mBMP(\Spec \completion{A}, \epsilon \divisor(p), \sOm_{\completion{A}})$.
Note that the left- and right-hand side is denoted by $\tau_{\mBMP}(\sOm_X, \epsilon \divisor(p)) \otimes \completion{A}$ and $\tau_+(\sOm_{\completion{A}}, p^\epsilon)$ in \cite[Definition 2.9, 6.16]{BMP+24}, respectively.
It follows that $\sOm_{X,x} = \sHLS(\sOm_X, \epsilon \divisor(p)) \otimes \sO_{X,x} \subset \sHLS(\sOm_X) \otimes \sO_{X,x}$.
Since $x \in X$ is an arbitrary closed point, we obtain $\sHLS(\sOm_X) = \sOm_X$.
\end{remark}

We also discuss the case where $R$ is not a DVR. We recall the following definition.

\begin{definition}
Let $(A, \frakm)$ be a $d$-dimensional excellent normal local ring. Then $A$ is said to
\begin{itemize}
\item be a \emph{splinter} if every finite extension $A \tomono B$ splits.
\item have a \emph{regular finite cover} if there exists a finite extension $A \tomono B$ to a regular ring $B$.
\item be a \emph{finite summand singularity} if $A$ is a splinter and has a regular finite cover. This definition is equivalent to \cite[Definition 5.6]{BMP+23} by the direct summand theorem \cite{Andr18}.
\end{itemize}
\end{definition}

\begin{remark}
\cite[Theorem 5.8]{BMP+23} implies that if every stalk of $X$ is a finite summand singularity, then $\sHLS(\sOm_X) = \sOm_X$.
\end{remark}

We now introduce two singularity conditions for a $d$-dimensional normal local ring $(A, \frakm)$.

\begin{condition}\label{cond: fincov}
Let $(A, \frakm)$ be a $d$-dimensional excellent normal local ring. The map $\mH^d_\frakm(A) \to \mH^d_\frakm(A^\abscl)$ is injective and $A$ has a regular finite cover.
\end{condition}

\begin{condition}\label{cond: regseq}
Let $(A, \frakm)$ be a $d$-dimensional excellent normal local ring. There exist $r \in \ZZge{0}$ and a regular sequence $a_1,\dots,a_r \in A$ such that $A / (a_1,\dots,a_r)$ satisfies \myref{cond: fincov}.
\end{condition}

\begin{remark}
For a $d$-dimensional excellent normal local ring $(A, \frakm)$, we see that
\begin{align*}
    \text{$A$ is regular}
    &\implies \text{$A$ is a finite summand singularity} \\
    &\implies \text{$A$ satisfies \myref{cond: fincov}} \\
    &\implies \text{$A$ satisfies \myref{cond: regseq}} \\
    &\implies \text{$\mH^d_{\frakm}(A) \to \mH^d_{\frakm}(A^\abscl)$ is injective.}
\end{align*}
Here, the first implication follows from the direct summand theorem \cite{Andr18} and the last implication follows from the proof of \cite[Proposition 3.4]{MaSc21}.
\end{remark}

We show in \myref{thm: sing} that $\sHLS(\sOm_X) = \sOm_X$ holds if every stalk of $X$ satisfies \myref{cond: regseq}.

\begin{lemma}\label{lem: B0-fin}
Let $g \colon U \to V$ be a finite surjection between normal integral quasi-projective $R$-schemes. Let $B \ge 0$ be a $\QQ$-divisor on $V$. Then
\[
    \Tr \mleft(g_*\sHLS\mleft(\sOm_U, g^*B\mright)\mright) = \sHLS\mleft(\sOm_V, B\mright)\text{,}
\]
where $\Tr \colon g_*\sOm_U \isom \sHom_V\mleft(g_*\sO_U, \sOm_V\mright) \to \sOm_V$ is the Grothendieck trace map.
\end{lemma}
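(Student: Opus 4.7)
The plan is to reduce to the projective case and then exploit the compatibility of the Grothendieck trace with compositions of finite surjections, via the intersection definition of $\mBMP$.

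Both sides of the claimed equality are defined locally on $V$ via restriction from projective compactifications, and the Grothendieck trace commutes with such restrictions. It therefore suffices to prove the analogous identity when $U$ and $V$ are projective, with $g \colon U \to V$ finite surjective (so that $\bar U$ may be taken as the normalization of $\bar V$ in $\rK(U)$ for any chosen projective compactification $\bar V \supset V$ and extension $\bar B$ of $B$). Fix a very ample line bundle $\sL = \sO_V(L)$ on $V$; since $g$ is finite, $g^*\sL$ is ample on $U$. By the definition of $\sHLS$ via $\mBMP$, it suffices to prove
\[
    \Tr\mleft(\mBMP\mleft(U, g^*B, \sOm_U \otimes g^*\sL^i\mright)\mright) = \mBMP\mleft(V, B, \sOm_V \otimes \sL^i\mright)
\]
as submodules of $\mH^0(V, \sOm_V \otimes \sL^i)$ for all $i \gg 0$.

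For ``$\subseteq$'', fix $t$ in the left-hand side and an arbitrary finite surjection $h \colon Y' \to V$ from a normal integral scheme. Form the normalization $W$ of a dominant component of $Y' \times_V U$; the two projections $k \colon W \to Y'$ and $h' \colon W \to U$ are both finite surjective, and $h \circ k = g \circ h'$. By the defining property of $\mBMP(U, g^*B, \sOm_U \otimes g^*\sL^i)$ applied to $h'$, the section $t$ lifts under $\Tr_{h'}$ to some $\tilde t$ on $W$. Transitivity of the Grothendieck trace then yields
\[
    \Tr_h(\Tr_k(\tilde t)) = \Tr_{h \circ k}(\tilde t) = \Tr_{g \circ h'}(\tilde t) = \Tr_g(\Tr_{h'}(\tilde t)) = \Tr_g(t),
\]
so $\Tr_g(t)$ lies in the image from $Y'$; since $h$ was arbitrary, $\Tr_g(t) \in \mBMP(V, B, \sOm_V \otimes \sL^i)$.

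For ``$\supseteq$'', the key ingredient is that $\mH^0(U, \sOm_U \otimes g^*\sL^i)$ is a Noetherian $R$-module by projectivity of $U$ over $R$. The images in this module coming from finite surjections $h' \colon X' \to U$ form a directed decreasing family---any two such covers are dominated by the normalization of a common component of their fiber product---and hence stabilize at some $h'_0 \colon X'_0 \to U$ realizing $\mBMP(U, g^*B, \sOm_U \otimes g^*\sL^i)$ as the single image from $X'_0$. Given $s \in \mBMP(V, B, \sOm_V \otimes \sL^i)$, applying the defining property of the latter to the composite cover $g \circ h'_0 \colon X'_0 \to V$ produces a lift $\tilde s$ on $X'_0$ with $\Tr_{g \circ h'_0}(\tilde s) = s$. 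Setting $t \coloneqq \Tr_{h'_0}(\tilde s)$, transitivity of the trace gives $\Tr_g(t) = s$, while $t \in \mBMP(U, g^*B, \sOm_U \otimes g^*\sL^i)$ by the choice of $X'_0$.

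The main obstacle will be the ``$\supseteq$'' direction, specifically the Noetherian stabilization of the defining intersection at a single cover and the careful handling of ceiling divisors under pullback along finite surjections; with these in place, the two transitivity applications of the Grothendieck trace immediately produce the desired lift.
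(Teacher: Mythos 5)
Your reduction to the projective case and your ``$\subseteq$'' direction are sound, and it is worth noting that your overall strategy amounts to reproving the result that the paper simply quotes: after compactifying, the paper's proof invokes \cite[Theorem 4.21]{HLS21} and stops, whereas you redo that theorem from the definition of $\mBMP$ using transitivity of the Grothendieck trace. The problem lies in the step you yourself flag as the main obstacle. You assert that the images $\im\bigl(\mH^0(X', \sO_{X'}(\dK_{X'}+\lceil\cdots\rceil)) \to \mH^0(U, \sOm_U \otimes g^*\sL^i)\bigr)$, forming a directed decreasing family of submodules of a Noetherian module, ``hence stabilize.'' Noetherianity is the \emph{ascending} chain condition; a directed decreasing family of submodules of a finitely generated module need not stabilize nor attain its intersection (already $\bigcap_n \mathfrak{m}^n = 0$ in a local domain while no single $\mathfrak{m}^n$ vanishes). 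So the ``$\supseteq$'' direction, as written, has a genuine gap at exactly its crucial point.

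The fact you need --- that the intersection defining $\mBMP$ is realized by a single finite cover --- is true, but it is a nontrivial theorem from Section~4 of \cite{BMP+23}, proved via Matlis/local duality and the deep cohomological properties of the absolute integral closure $X^\abscl$ in mixed characteristic; it is the analogue of the Hartshorne--Speiser--Lyubeznik/Gabber stabilization underlying $S^0$ in characteristic $p$, not a formal consequence of finite generation. Citing that result repairs your argument. Alternatively, you can avoid single-cover stabilization entirely: for each finite cover $X' \to U$ your construction produces an element of the nonempty coset $I_{X'} \cap \Tr_g^{-1}(s)$ of the submodule $I_{X'} \cap \ker\Tr_g$; these cosets form a directed decreasing family, and finitely generated modules over the complete local ring $R$ are linearly compact, so the intersection of all these cosets is nonempty, and any element of it is the desired $t$. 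Either fix must be supplied explicitly; without one, the ``$\supseteq$'' inclusion is not established.
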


\begin{proof}
We may assume that $V$ is a sufficiently small affine scheme.
Take a compactification of $g$ and $B$: a morphism $f \colon X \to Y$ and divisor $\barB$ on $X$ such that $U$ and $V$ are open sets in $X$ and $Y$ respectively, $f \restrict{U} = g$, and $\barB \restrict{U} = B$.
Applying \cite[Theorem 4.21]{HLS21} to $f$, we get the conclusion.
\end{proof}

\begin{lemma}\label{lem: regfincov}
Let $(A, \frakm)$ be a $d$-dimensional Noetherian normal local ring that has a dualizing complex. Suppose that $A$ has a regular finite cover $A \tomono B$. Then the following are equivalent.
\begin{enumerate}
\item $\mH^d_\frakm(A) \to \mH^d_\frakm\mleft(A^\abscl\mright)$ is injective.
\item $\mH^d_\frakm(A) \to \mH^d_\frakm(B)$ is injective.
\item $\Tr \colon \sOm_B \to \sOm_A$ is surjective.
\end{enumerate}
\end{lemma}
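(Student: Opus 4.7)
The plan is to prove $(2) \Leftrightarrow (3)$ by Grothendieck/Matlis duality for the finite map $g \colon \Spec B \to \Spec A$, and then to prove $(1) \Leftrightarrow (2)$ using that $B$ is a splinter. All three conditions are preserved under $\frakm$-adic completion of $A$, since local cohomology, the canonical module $\sOm_A$, and the trace map commute with completion, so I may assume $A$ is complete.

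For $(2) \Leftrightarrow (3)$: since $B$ is regular of Krull dimension $d = \dim A$, its dualizing complex $\cpOm_B \cong \sOm_B[d]$ is concentrated in a single degree. Duality for the finite map gives $g^! \cpOm_A \cong \cpOm_B$, and taking $H^{-d}$ yields a canonical isomorphism $\Hom_A(B, \sOm_A) \cong \sOm_B$ of $B$-modules whose counit $g_* \sOm_B \to \sOm_A$ is the Grothendieck trace. By local duality there are isomorphisms $\mH^d_\frakm(A)^\dual \cong \sOm_A$ and $\mH^d_\frakm(B)^\dual \cong \sOm_B$, and under these the pullback $\mH^d_\frakm(A) \to \mH^d_\frakm(B)$ is Matlis dual to the trace map. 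Since Matlis duality is an exact anti-equivalence on the relevant categories, the former is injective if and only if the latter is surjective.

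The implication $(1) \Rightarrow (2)$ is formal: fixing an embedding of the fraction field of $B$ into the chosen algebraic closure of the fraction field of $A$ yields a factorization $A \tomono B \tomono A^\abscl$, so the map $\mH^d_\frakm(A) \to \mH^d_\frakm(A^\abscl)$ factors through $\mH^d_\frakm(B)$, forcing injectivity of the first arrow.

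For $(2) \Rightarrow (1)$, the substantive direction: since $B$ is regular, Andr\'e's direct summand theorem shows that $B$ is a splinter, so every finite extension $B \tomono B'$ to a normal integral domain $B'$ splits as a map of $B$-modules. Because $B$ is finite over $A$, we have $A^\abscl = B^\abscl$, which is realized as the filtered colimit of such $B'$. Each $\mH^d_\frakm(B) \to \mH^d_\frakm(B')$ is therefore split injective, and since local cohomology commutes with filtered colimits we obtain an injection $\mH^d_\frakm(B) \tomono \mH^d_\frakm(A^\abscl)$; composing with the injection from $(2)$ yields $(1)$. The hard part will be correctly handling the duality in $(2) \Leftrightarrow (3)$ when $A$ is not Cohen-Macaulay, but working consistently with the dualizing complex $\cpOm_A$ rather than only with $\sOm_A$ sidesteps any real obstacle.
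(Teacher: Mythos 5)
Your proof is correct and follows essentially the same route as the paper's: the equivalence of (1) and (2) comes from factoring $A \to A^\abscl$ through $B$ and using the direct summand theorem to see that $\mH^d_\frakm(B) \to \mH^d_\frakm\mleft(A^\abscl\mright)$ is injective, while (2) $\Leftrightarrow$ (3) is Matlis/local duality. (The preliminary reduction to complete $A$ is unnecessary, and is the one slightly delicate point for condition (1) since completion does not interact simply with the absolute integral closure, but nothing in the rest of your argument actually depends on it.)
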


\begin{proof}
$\mH^d_\frakm(B) \to \mH^d_\frakm(B^\abscl) \isom \mH^d_\frakm(A^\abscl)$ is injective by the direct summand theorem. Thus (1) and (2) are equivalent. By Matlis duality, (2) and (3) are equivalent.
\end{proof}

The lemma below is similar to \cite[Theorem 7.2]{BMP+23}.

\begin{lemma}\label{lem: BMPAdj}
Suppose that $D$ is a normal prime divisor on $X$, $B \ge 0$ is a $\QQ$-Cartier divisor on $X$, and $D$ and $B$ have no common components. Let $\sL = \sO_X(L)$ be a line bundle on $X$ such that $L - B$ is semiample and big. Then the natural morphism $\sOm_X(D) \to \sOm_D$ induces a surjection
\[
    \mBMPAdj{D}(X, D+B, \sOm_X(D) \otimes \sL) \to \mBMP(D, B \restrict{D}, \sOm_D \otimes \sL)\text{.}
\]
\end{lemma}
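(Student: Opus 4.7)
The plan is to mirror the argument of \cite[Theorem 7.2]{BMP+23}, with Bhatt's vanishing replacing the characteristic-zero input. I would fix a section $t \in \mBMP(D, B\restrict{D}, \sOm_D \otimes \sL\restrict{D})$ and unwind the definitions of $\mBMPAdj{D}$ and $\mBMP$ (applied with $M = L + D$, so that $M - B - D = L - B$). The task then becomes the following: for every finite surjection $g \colon X' \to X$ from a normal integral scheme, the section
\[
    t_{D_{X'}} \in \mH^0(D_{X'}, \sOm_{D_{X'}} \otimes (g\restrict{D_{X'}})^*(\sL\restrict{D})(-\floor{g^*B\restrict{D_{X'}}}))
\]
produced by the intersection defining $\mBMP$ on $D$ should lift to a section of $\sOm_{X'}(D_{X'}) \otimes g^*\sL(-\floor{g^*B})$ on $X'$, compatibly as $g$ varies.

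The key is the residue short exact sequence
\[
    0 \to \sOm_{X'} \otimes g^*\sL(-\floor{g^*B}) \to \sOm_{X'}(D_{X'}) \otimes g^*\sL(-\floor{g^*B}) \to \sOm_{D_{X'}} \otimes (g\restrict{D_{X'}})^*(\sL\restrict{D})(-\floor{g^*B\restrict{D_{X'}}}) \to 0,
\]
which is well-defined because $D$ and $B$ (hence $D_{X'}$ and $\floor{g^*B}$) share no components. Its long exact cohomology sequence reduces the lifting of $t_{D_{X'}}$ to the vanishing of a class in $\mH^1(X', \sOm_{X'} \otimes g^*\sL(-\floor{g^*B}))$. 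Since $L - B$ is semiample and big, Bhatt's vanishing \cite[Corollary 3.7]{BMP+23} applied to $-(L-B)$ on $X^\abscl$, combined with Matlis duality and the exactness of the inverse limit \cite[Lemma 4.10]{BMP+23}, shows that this $H^1$ dies in the colimit over further finite covers $X'' \to X'$. Choosing such an $X''$, a lift exists, and its image in $\mH^0(X, \sOm_X(D) \otimes \sL)$ provides the desired element $\tilde t$. A standard diagram chase then confirms that $\tilde t$ is independent of the choices, and hence lies in $\mBMPAdj{D}(X, D + B, \sOm_X(D) \otimes \sL)$.

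The main obstacle will be reconciling the integer-valued floor $\floor{g^*B}$ appearing in the cohomology with the $\QQ$-divisor formulation of Bhatt's vanishing. I would handle this by first passing to a cover absorbing the Cartier index of $B$ so that $g^*B$ becomes integral, and then verifying that the semiample-and-big hypothesis on $L - B$ is preserved under this pullback, in the style of the rounding manipulations in \cite[\S 4]{HLS21}. A secondary bookkeeping point is that the fixed integral closed subscheme $D^\abscl \subset X^\abscl$ used in the definition of $\mBMPAdj{D}$ must be compatible with the system of lifts $t_{D_{X'}}$ appearing on the $D$-side; this is automatic once $D^\abscl$ has been fixed, since every finite cover of $D$ is dominated by some $D_{X'}$ for a sufficiently large $X'$.
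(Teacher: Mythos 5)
Your strategy is the same as the paper's in its essential inputs---the adjunction sequence for $D$, Bhatt's vanishing applied to $B-L$ on the absolute integral closure, and Matlis duality---but the section-level, obstruction-class formulation contains two steps that do not go through as stated. First, you need
$0 \to \sOm_{X'} \otimes g^*\sL(-\floor{g^*B}) \to \sOm_{X'}(D_{X'}) \otimes g^*\sL(-\floor{g^*B}) \to \sOm_{D_{X'}} \otimes (\cdots) \to 0$
to be a short exact sequence of sheaves on \emph{every} normal finite cover $X'$, so that the obstruction to lifting lives in an $H^1$. The issue is not the compatibility of the twists with $\floor{g^*B}$ (which your ``no common components'' remark handles) but the surjectivity of the residue map: for $X'$ merely normal and $D_{X'}$ a Weil divisor that need not be normal or Cartier, the cokernel of $\sOm_{X'} \to \sOm_{X'}(D_{X'})$ only injects into $\sOm_{D_{X'}}$, with the defect controlled by $H^{-(d-1)}(\cpOm_{X'})$. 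This is precisely why the paper (following \cite[Theorem 7.2]{BMP+23}) works instead with the manifestly exact ideal-sheaf sequence $0 \to \sO_X(-D)\otimes\sL^\inv \to \sL^\inv \to \sO_D\otimes\sL^\inv \to 0$ and its analogue on $X^\abscl$, and performs the dualization only once at the end via \cite[Lemmas 4.8 and 4.24]{BMP+23}, where the adjoint module $\mBMPAdj{D}$ is exactly designed to absorb this bookkeeping.

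Second, you place the obstruction in coherent cohomology $H^1(X', \sOm_{X'}\otimes g^*\sL(-\floor{g^*B}))$ and assert that Bhatt's vanishing kills it in the limit. But \cite[Corollary 3.7]{BMP+23} vanishes $\fHRRGa{i}{\frakm}(X^\abscl, \sO_{X^\abscl}(\nu^*(B-L)))$ for $i<d$, not coherent $H^1$; over a complete local base that is not a field these are different groups, and the bridge between the lifting problem for sections and the local-cohomology vanishing is exactly Grothendieck--Matlis duality, i.e., the argument you are implicitly deferring to. The paper's route makes this explicit: it forms the square comparing $\fHRRGa{d-1}{\frakm}(D, \sO_D\otimes\sL^\inv) \to \fHRRGa{d}{\frakm}(X, \sO_X(-D)\otimes\sL^\inv)$ with the corresponding maps on $D^\abscl$ and $X^\abscl$, uses $\fHRRGa{d-1}{\frakm}(X^\abscl, \sO_{X^\abscl}(\nu^*(B-L)))=0$ to make the bottom row left exact, deduces injectivity of $\im\alpha \to \im\beta$, and dualizes. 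Recasting your argument in these terms also dissolves the compatibility issue you flag at the end (producing one $\tilde t$ lying in the full intersection defining $\mBMPAdj{D}$ from cover-by-cover lifts): intersections of images dualize to images of the inverse limit, which is exact by \cite[Lemma 4.10]{BMP+23}, so no stabilization or diagram chase over the directed system is needed.
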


\begin{proof}
Let $\sN \coloneqq \sO_{X^\abscl}(\nu^*(B - L))$.
Take $D^\abscl$ as in \myref{def: mBMP}: $D^\abscl \subset X^\abscl$ is a closed subscheme lying over $D \subset X$.
We have the following diagram with the exact rows
\begin{DIFnomarkup}
\begin{equation*}
    \begin{tikzcd}
        0 \arrow[r] & \sO_X(-D)\otimes\sL^\inv \arrow[r] \arrow[d]
        & \sL^\inv \arrow[r] \arrow[d]
        & \sO_D\otimes\sL^\inv \arrow[r] \arrow[d] & 0 \\
        0 \arrow[r] & \nu_*(\sO_{X^\abscl}(-D^\abscl)\otimes\sN) \arrow[r]
        & \nu_* \sN \arrow[r]
        & \nu_*(\sO_{D^\abscl}\otimes\sN) \arrow[r] & 0
    \end{tikzcd}_{\textstyle .}
\end{equation*}
\end{DIFnomarkup}We obtain the diagram
\begin{DIFnomarkup}
\begin{equation*}
    \begin{tikzcd}
        & \fHRRGa{d-1}{\frakm}\mleft(D,\sO_D\otimes\sL^\inv\mright)
        \arrow[r] \arrow[d,"\alpha"]
        & \fHRRGa{d}{\frakm}\mleft(X,\sO_X(-D)\otimes\sL^\inv\mright)
        \arrow[d,"\beta"] \\
        0
        \arrow[r]
        & \fHRRGa{d-1}{\frakm}\mleft(D^\abscl,\sO_{D^\abscl}\otimes\sN\mright)
        \arrow[r]
        & \fHRRGa{d}{\frakm}\mleft(X^\abscl,\sO_{X^\abscl}\mleft(-D^\abscl\mright)\otimes\sN\mright)
    \end{tikzcd}_{\textstyle ,}
\end{equation*}
\end{DIFnomarkup}where the bottom row is exact because $\fHRRGa{d-1}{\frakm}(X^\abscl,\sN) = 0$ by Bhatt's vanishing theorem (\myref{thm: Bhatt}). Hence, the map $\im \alpha \to \im \beta$ is injective. The Matlis dual of this map is the desired surjection by \cite[Lemma 4.8 (a), Definition 4.21, Lemma 4.24]{BMP+23}.
\end{proof}

\begin{corollary}[{cf.~\cite[Theorem 4.23]{HLS21}}]\label{cor: HLSAdj}
Suppose that $D$ is a normal prime divisor on $X$, $B \ge 0$ is a $\QQ$-Cartier divisor on $X$, and $D$ and $B$ have no common components. Then, the natural morphism $\sOm_X(D) \to \sOm_D$ induces a surjection
\[
    \sHLSAdjW{X}{D}{B} \toepi \sHLS(\sOm_D, B \restrict{D})\text{.}
\]
In particular, we have an inclusion
\[
    \im(\sHLS(\sOm_X, B-D) \tomono \sOm_X(D) \to \sOm_D) \supseteq \sHLS(\sOm_D, B \restrict{D})\text.
\]
\end{corollary}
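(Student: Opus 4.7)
The proof will proceed in two stages. First, I would establish the surjection $\sHLSAdjW{X}{D}{B} \toepi \sHLS(\sOm_D, B \restrict{D})$ by promoting the section-level surjection of \myref{lem: BMPAdj} to a statement at the sheaf level. Then I would deduce the ``in particular'' inclusion from a corresponding inclusion of subsheaves $\sHLSAdjW{X}{D}{B} \subseteq \sHLS(\sOm_X, B - D)$ inside $\sOm_X(D)$.

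For the first stage, I would fix a very ample line bundle $\sA$ on $X$, so that $\sA \restrict{D}$ is very ample on $D$. For $i \gg 0$, the line bundle $\sA^i$ satisfies the hypothesis of \myref{lem: BMPAdj} since $iA - B$ is ample, yielding a surjection
\[
    \mBMPAdj{D}\mleft(X, D+B, \sOm_X(D) \otimes \sA^i\mright) \toepi \mBMP\mleft(D, B \restrict{D}, \sOm_D \otimes \sA^i \restrict{D}\mright).
\]
By \myref{def: sHLS}, the left-hand module generates $\sHLSAdjW{X}{D}{B} \otimes \sA^i$ as a subsheaf of $\sOm_X(D) \otimes \sA^i$ and, similarly, the right-hand module generates $\sHLS(\sOm_D, B \restrict{D}) \otimes \sA^i \restrict{D}$, for all $i$ sufficiently large. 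Because the restriction $\sOm_X(D) \otimes \sA^i \to \sOm_D \otimes \sA^i \restrict{D}$ carries the first module surjectively onto the second, the image of $\sHLSAdjW{X}{D}{B} \otimes \sA^i$ under restriction is exactly $\sHLS(\sOm_D, B \restrict{D}) \otimes \sA^i \restrict{D}$; tensoring with $\sA^{-i}$ recovers the desired surjection.

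For the ``in particular'' stage, I would verify the sheaf inclusion $\sHLSAdjW{X}{D}{B} \subseteq \sHLS(\sOm_X, B - D)$ by establishing it at the level of sections. With $\sL = \sA^i$ as above, the claim is
\[
    \mBMPAdj{D}\mleft(X, D+B, \sOm_X(D) \otimes \sL\mright) \subseteq \mBMP\mleft(X, B-D, \sOm_X(D) \otimes \sL\mright),
\]
where the right-hand side is identified with $\mBMP(X, B, \sOm_X(2D) \otimes \sL)$ by multiplication with the canonical section $s_D$ of $\sO_X(D)$. Given a lift $\sigma' \in \mH^0(X', \sO_{X'}(\dK_{X'} + D_{1, X'} + \ceil{g^*(L - B)}))$ of a section $\sigma \in \mBMPAdj{D}(X, D+B, \sOm_X(D) \otimes \sL)$ along a finite cover $g \colon X' \to X$, the section $g^* s_D \cdot \sigma'$ lies in $\mH^0(X', \sO_{X'}(\dK_{X'} + D_{1, X'} + g^*D + \ceil{g^*(L - B)}))$ and has image $s_D \cdot \sigma$ under the trace. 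Since ramification indices of $g$ along components of $g^{-1}(D)$ are positive integers, $D_{1, X'} \le g^*D$, and the integrality of $g^*D$ yields
\[
    D_{1, X'} + g^*D + \ceil{g^*(L - B)} \le 2 g^*D + \ceil{g^*(L - B)} = \ceil{g^*(2D + L - B)},
\]
so $g^* s_D \cdot \sigma'$ provides the required lift of $s_D \cdot \sigma$ to $\mH^0(X', \sO_{X'}(\dK_{X'} + \ceil{g^*(2D + L - B)}))$, showing $s_D \cdot \sigma \in \mBMP(X, B, \sOm_X(2D) \otimes \sL)$. Sheafifying the resulting inclusion and combining with the first-stage surjection gives $\im(\sHLS(\sOm_X, B - D) \to \sOm_D) \supseteq \im(\sHLSAdjW{X}{D}{B} \to \sOm_D) = \sHLS(\sOm_D, B \restrict{D})$.

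The main obstacle is the second stage's bookkeeping: justifying the identification $\mBMP(X, B - D, \sOm_X(D) \otimes \sL) = \mBMP(X, B, \sOm_X(2D) \otimes \sL)$ via multiplication by $s_D$, and verifying the ceiling identity $\ceil{g^*(2D + L - B)} = 2 g^*D + \ceil{g^*(L - B)}$ based on the integrality of the Weil pullback $g^*D$ for a finite cover of normal integral schemes.
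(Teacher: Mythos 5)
Your first stage is exactly the paper's (one-line) proof: twist by powers $\sA^i$ of a very ample line bundle so that \myref{lem: BMPAdj} applies (any such $\sA^i$ with $i \gg 0$ has $iA - B$ ample, hence semiample and big), observe that by \myref{def: sHLS} the two modules in that lemma generate $\sHLSAdjW{X}{D}{B} \otimes \sA^i$ and $\sHLS(\sOm_D, B\restrict{D}) \otimes \sA^i\restrict{D}$ for $i \gg 0$, and use that the image of a subsheaf generated by a module of sections is generated by the image of that module. The paper leaves the ``in particular'' clause entirely implicit, and your second stage supplies the intended containment $\sHLSAdjW{X}{D}{B} \subseteq \sHLS(\sOm_X, B-D)$ correctly: the essential points are that $D_{1,X'} \le g^*D$ because the ramification indices along the components of $g^\inv(D)$ are positive integers, and that $g^*D$ is an integral Weil divisor, so the ceiling splits as $\ceil{g^*(2D+L-B)} = 2g^*D + \ceil{g^*(L-B)}$; this makes every adjoint-type lift of $\sigma$ into a lift of $s_D\cdot\sigma$ for the non-adjoint module. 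The one technical caveat is that identifying $\mBMP(X, B-D, \sOm_X(D)\otimes\sL)$ with $\mBMP(X, B, \sOm_X(2D)\otimes\sL)$ via $s_D$ amounts to choosing $H = D$ in the definition of $\mBMP$ for non-effective boundaries, which requires $D$ to be Cartier; for a general normal prime divisor one should take an effective Cartier $H \ge D$ instead, and then the same inequality, now in the form $g^*H + 2g^*D - D_{1,X'} \ge 0$, carries your computation through verbatim.
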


Note that the latter inclusion can be regarded as a version of the usual restriction theorem for multiplier ideals \cite[Theorem 9.5.1]{LazII}.

\begin{proof}
Let $\sL$ be a sufficiently ample line bundle on $X$. Then $\mBMPAdj{D}(X, D+B, \sOm_X(D) \otimes \sL)$ (resp.~$\mBMP(D, B \restrict{D}, \sOm_D \otimes \sL)$) generates $\sHLSAdjW{X}{D}{B} \otimes \sL$ (resp.~$\sHLS(\sOm_D, B \restrict{D}) \otimes \sL$).
By \myref{lem: BMPAdj}, we obtain the desired surjection.
The latter inclusion follows since $\sHLSAdjW{X}{D}{B} \subset \sHLS(\sOm_X, B-D)$ by \cite[Lemma 4.8 (c)]{HLS21}.
\end{proof}

\begin{theorem}\label{thm: sing}
Let $X$ be a normal integral projective $R$-scheme. If $\sO_{X,x}$ satisfies \myref{cond: regseq} for every closed point $x \in X$, then $\sHLS(\sOm_X) = \sOm_X$ holds.
\end{theorem}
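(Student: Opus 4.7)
The plan is to check the equality $\sHLS(\sOm_X) = \sOm_X$ at each closed point $x$ of $X$, using coherence of $\sHLS(\sOm_X) \subset \sOm_X$, and then to induct on the length $r$ of a regular sequence as in \myref{cond: regseq}. Throughout write $A \coloneqq \sO_{X,x}$.

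For the base case $r = 0$, the ring $A$ itself satisfies \myref{cond: fincov}, and \myref{lem: regfincov} then yields a regular finite cover $A \hookrightarrow B$ along which the Grothendieck trace $\sOm_B \to \sOm_A$ is surjective. Spreading this out gives an affine open neighborhood $U$ of $x$ and a finite cover $g \colon V \to U$ with $V$ regular and $\Tr \colon g_* \sOm_V \to \sOm_U$ surjective. Since regular schemes are finite summand singularities, $\sHLS(\sOm_V) = \sOm_V$ by \cite[Theorem 5.8]{BMP+23}, and \myref{lem: B0-fin} then gives $\sHLS(\sOm_U) = \Tr(\sHLS(\sOm_V)) = \sOm_U$.

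For the inductive step $r \ge 1$, I would choose an element $a \in A$ cutting out a normal prime divisor $D$ through $x$ on a neighborhood of $x$ in $X$, such that $\sO_{D,x}$ admits a regular sequence of length $r-1$ whose quotient still satisfies \myref{cond: fincov}. By induction, $\sHLS(\sOm_D) = \sOm_D$ on a neighborhood of $x$. Applying \myref{cor: HLSAdj} with $B = 0$ and Cartier divisor $D$ then gives, on a neighborhood of $x$,
\[
    \im\bigl(\sHLS(\sOm_X) \otimes \sO_X(D) \to \sOm_X(D) \to \sOm_D\bigr) \supseteq \sHLS(\sOm_D) = \sOm_D.
\]
Passing to stalks at $x$, where $\sO_X(D)$ is free of rank one, the composition $\sHLS(\sOm_X)_x \hookrightarrow \sOm_{X,x} \twoheadrightarrow \sOm_{X,x}/a\sOm_{X,x}$ is surjective. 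Hence $\sHLS(\sOm_X)_x + a \sOm_{X,x} = \sOm_{X,x}$, and Nakayama's lemma forces $\sHLS(\sOm_X)_x = \sOm_{X,x}$.

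The hard part will be the choice of $a$. The naive choice $a = a_1$ can fail, since $A/(a_1)$ need not be normal even when $A/(a_1, \dots, a_r)$ is, so $V(a_1)$ need not satisfy the hypothesis of \myref{cor: HLSAdj}. To handle this, I would perturb $a_1$ within the ideal $(a_1, \dots, a_r)$ — replacing it by some $a = a_1 + \sum_{i \ge 2} c_i a_i$ — so that $a$ cuts out a normal prime divisor at $x$ while keeping $(a, a_2, \dots, a_r) = (a_1, \dots, a_r)$ (thereby preserving the inductive setup, since the new quotient $A/(a, a_2, \dots, a_r) = A/(a_1, \dots, a_r)$ continues to satisfy \myref{cond: fincov}). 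A Bertini-type argument on a sufficiently ample linear system through $x$ should supply such a perturbation after passing to a possibly smaller affine neighborhood.
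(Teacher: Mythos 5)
Your strategy coincides with the paper's: a stalkwise reduction, induction on $r$, a base case via surjectivity of the trace from a regular finite cover (\myref{lem: regfincov} combined with \myref{lem: B0-fin}), and an inductive step via the adjunction surjection of \myref{cor: HLSAdj} followed by Nakayama. The base case is fine. The genuine gap is in how you invoke \myref{cor: HLSAdj}: that corollary (and \myref{lem: BMPAdj} behind it) is a statement about a normal prime divisor $D$ on the \emph{projective} scheme $X$ --- the adjoint $\bmplus$-stable sections are defined via limits over finite covers of the projective $X$, and the proof uses local cohomology on $X^\abscl$ and Bhatt's vanishing --- whereas your $D$ is only defined, and only known to be normal, on a neighborhood of $x$. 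To apply the corollary one must pass to the closure $\barD$ in $X$, which need not be normal even when $D = \barD \cap U$ is. The paper handles exactly this: the normalization of $\barD$ is the blowup of $\barD$ along a subscheme $Z$ supported away from $U$, and replacing $X$ by its blowup along $Z$ (an isomorphism near $x$) makes the divisor a normal prime divisor globally. This globalization step is missing from your argument. A smaller omission in the same step: from surjectivity onto $\sOm_{D,x}$ you conclude surjectivity onto $\sOm_{X,x}/a\sOm_{X,x}$, which needs the map $\sOm_X(D)\otimes\sO_D \to \sOm_D$ to be injective; the paper proves this from the fact that $\sOm_X$ is \serreS{2}, so that $\sOm_X(D)\otimes\sO_D$ is \serreS{1} and injects into its reflexive hull $\sOm_D$.

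Concerning your ``hard part'': the paper takes $a = a_1$ outright and asserts that $\divisor_U a_1$ is a normal prime divisor for $U$ small, i.e., it reads \myref{cond: regseq} as furnishing normality of the intermediate quotients $\sO_{X,x}/(a_1,\dots,a_i)$ (which the inductive hypothesis requires anyway, since \myref{cond: regseq} and \myref{cond: fincov} are only formulated for normal local rings). Under that reading your perturbation is unnecessary. Under the literal reading your concern is legitimate, but the proposed fix is not convincing as stated: a general member $a_1 + \sum_{i\ge 2} c_i a_i$ cuts out a hypersurface containing $V(a_1,\dots,a_r)$, and no Bertini theorem guarantees its normality --- $R_1$ can fail along the base locus, which has codimension $r-1$ in the hypersurface, and $S_2$ can fail as well, since a hypersurface section of a normal local ring that is not Cohen--Macaulay is in general only $S_1$. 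That step should therefore either be dropped (adopting the paper's reading of the hypothesis) or replaced by a genuine argument rather than a Bertini heuristic.
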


\begin{proof}
We prove $\sHLS(\sOm_X)_x = \sOm_{X,x}$ for each closed point $x \in X$. By assumption, there exist $r \ge 0$ and a regular sequence $a_1,\dots,a_r \in \sO_{X,x}$ such that $\sO_{X,x} / (a_1,\dots,a_r)$ satisfies \myref{cond: fincov}. We use induction on $r \ge 0$.

Suppose that $r = 0$. Since $\sO_{X,x}$ satisfies \myref{cond: fincov}, by \myref{lem: regfincov}, we have a neighborhood $U$ of $x$ and a regular finite cover $g \colon U' \to U$ such that $\Tr \colon g_*\sOm_{U'} \to \sOm_U$ is surjective. By \myref{lem: B0-fin}, we obtain $\Tr (g_*\sHLS(\sOm_{U'})) = \sHLS(\sOm_U)$. Now, we have $\sHLS(\sOm_{U'}) = \sOm_{U'}$ by \cite[Proposition 4.24]{HLS21}. Therefore, we deduce that
\[
    \sHLS(\sOm_U) = \Tr (g_*\sHLS(\sOm_{U'})) = \Tr (g_*\sOm_{U'}) = \sOm_U\text{.}
\]
Thus we obtain $\sHLS(\sOm_{X})_{x} = \sOm_{X,x}$.

Suppose that $r \ge 1$. Let $U \subset X$ be a sufficiently small affine neighborhood of $x$. Then we see that $D \coloneqq \divisor_U a_1$ is a normal prime divisor, where we recall that $a_1, \dotsc, a_r$ is the regular sequence of \myref{cond: regseq}. Set $\barD$ to be the closure of $D$ in $X$. Replacing $X$ with a blowup of it, we may assume that $\barD$ is also a normal prime divisor. Indeed, the normalization morphism $\scNor\barD \to \barD$ is the blowup along a subscheme $Z$ of $\barD$ by \cite[Theorem 8.1.24]{Liu02}, and the blowup of $X$ along $Z$ is the desired replacement. We obtain
\[
    \im\mleft(\sHLS\mleft(\sOm_X, -\barD\mright) \to \sOm_{\barD}\mright) \supseteq \sHLS\mleft(\sOm_{\barD}\mright)
\]
from \myref{cor: HLSAdj}.
Therefore, since $\sHLS(\sOm_{D})_x = \sOm_{D,x}$ by the induction hypothesis, we deduce that
\[
    \im(\sHLS(\sOm_U) \otimes \sO_U(D) \to \sOm_D)_x = \im(\sHLS(\sOm_U, -D) \to \sOm_{D})_x = \sOm_{D,x}\text.
\]
Now, since $D$ is a normal Cartier divisor, we have
\[
    (\sOm_U(D) \otimes \sO_D)^\reflHull \isom \sOm_D\text{.}
\]
Since $\sOm_U$ is \serreS{2}, the module $\sOm_U(D) \otimes \sO_D$ is \serreS{1}, and hence the morphism $\sOm_U(D) \otimes \sO_D \to \sOm_D$ is injective. We thus get the diagram
\begin{DIFnomarkup}
\begin{equation*}
    \begin{tikzcd}[row sep=small]
        (\sHLS(\sOm_{U})\otimes\sO_U(D))_x \arrow[rr, two heads, bend left] \arrow[r, "\alpha"] & (\sOm_{U}(D)\otimes\sO_D)_x \arrow[r, hook] & \sOm_{D,x}
    \end{tikzcd}\text{,}
\end{equation*}
\end{DIFnomarkup}and hence see that $\alpha$ is surjective. We conclude from Nakayama's lemma that $\sHLS(\sOm_U)_x = \sOm_{U,x}$.
\end{proof}

\begin{theorem}\label{thm: gg-alt}
We work in \myref{set: RelB0}.
Assume that $f$ is generically finite. Let $\sL$ be a globally generated ample line bundle on $Y$. Define $V$ to be the maximal open subset of $Y$ satisfying $\relBWo{f}{X} \restrict{V} = f_*\sHLS(\sOm_X) \restrict{V}$. Suppose that $\sO_{X,x}$ satisfies \myref{cond: regseq} for each closed point $x \in X$. Then, $V$ contains the locus over which $f$ is finite, and
\[
    f_*\sOm_X \otimes \sL^l
\]
is globally generated over $V$ for $l \ge n+1$.
\end{theorem}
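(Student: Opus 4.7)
The plan is to combine the two main ingredients developed just above: \myref{thm: sing}, which upgrades the pointwise singularity hypothesis \myref{cond: regseq} to a global identity of sheaves $\sHLS(\sOm_X) = \sOm_X$, and \myref{cor: gg-alt-tau}, which gives a global generation result for $f_*\sHLS(\sOm_X) \otimes \sL^l$ on the open locus $V$. Since the hypothesis on $f$ (generic finiteness) and on $\sL$ (globally generated, ample) match the hypotheses of \myref{cor: gg-alt-tau}, the only extra work is to identify $\sHLS(\sOm_X)$ with $\sOm_X$ globally on $X$.

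First I would check that the pointwise assumption in the statement is exactly what \myref{thm: sing} requires: namely, that $\sO_{X,x}$ satisfies \myref{cond: regseq} for each closed point $x \in X$. Invoking \myref{thm: sing} then yields the sheaf equality $\sHLS(\sOm_X) = \sOm_X$ on all of $X$, so in particular $f_* \sHLS(\sOm_X) = f_* \sOm_X$ as subsheaves of $f_* \sOm_X$.

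Next I would apply \myref{cor: gg-alt-tau} to $f$ and $\sL$. That corollary produces the open set $V$ (described as the maximal open set where $\relBWo{f}{X}$ agrees with $f_* \sHLS(\sOm_X)$), states that $V$ contains the locus over which $f$ is finite, and asserts that $f_* \sHLS(\sOm_X) \otimes \sL^l$ is globally generated over $V$ for all $l \ge n+1$. Combining this with the identification from the previous step turns the conclusion into global generation of $f_* \sOm_X \otimes \sL^l$ over $V$, which is exactly what we want.

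I do not expect any real obstacle: the theorem is essentially the conjunction of \myref{thm: sing} and \myref{cor: gg-alt-tau}, and both have already been proved. The only point that deserves a sentence of justification is that the open set $V$ appearing in the statement is literally the one constructed in \myref{cor: gg-alt-tau}, so no separate argument is needed to match the two open sets.
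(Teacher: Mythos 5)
Your proposal is correct and matches the paper's own proof, which is literally "Combine \myref{thm: sing} and \myref{cor: gg-alt-tau}." The identification of the open set $V$ with the one from \myref{cor: gg-alt-tau} is immediate from the statements, as you note.
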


\begin{proof}
Combine \myref{thm: sing} and \myref{cor: gg-alt-tau}.
\end{proof}

\section{Global generation for pluricanonical bundles}
The purpose of this section is to prove \myref{main: main}.
We still maintain \myref{set: RelB0}.

We need the following variant of \myref{prop: RelB0-Keeler}.
\begin{proposition}\label{prop: RelB0-KeelerOp}
Let $\sM = \sO_X(M)$ be a line bundle on $X$. Suppose that $V \subset Y$ is an open subset, $U \coloneqq f^\inv(V)$, and $\sM \restrict{U}$ is $f \restrict{U}$-ample. Let $t \in \QQ$, and set $B \coloneqq tM$. Then for sufficiently large $m$,
\[
    \relBWM{f}{X}{B}{\sM^m} \restrict{V} = f_*\mleft(\sHLS\mleft(\sOm_X, B\mright) \otimes \sM^m\mright) \restrict{V}\text{.}
\]
\end{proposition}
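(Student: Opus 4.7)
I will adapt the proof of Proposition \ref{prop: RelB0-Keeler} to the local setting. The assertion concerns equality of two coherent subsheaves of $f_*(\sOm_X \otimes \sM^m)$ (with inclusion given by Corollary \ref{cor: RelB0-incl}), so it is local on $V$. Hence I may shrink $V$ and assume $V = Y \setminus \Supp \mathrm{div}(s)$ for a section $s$ of some $\sL^k$, where $\sL$ is a very ample line bundle on $Y$; in particular, $V$ is affine. Since $V$ is affine and $\sM \restrict{U}$ is $f \restrict{U}$-ample, $\sM \restrict{U}$ is in fact ample on $U$ in the absolute sense. Consequently, for $l$ large, the line bundle $\sL_X \coloneqq \sM \otimes f^* \sL^l$ has the property that $\sL_X \restrict{U}$ is ample on $U$, even though $\sL_X$ itself need not be ample on $X$.

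The central technical input I will then establish is a localized variant of \myref{lem: B0-Keeler}: for any line bundle $\sL_X$ on $X$ with $\sL_X \restrict{U}$ ample on $U$ and any nef line bundle $\sN$ on $X$, there exists $i_0$ such that for every $i \ge i_0$,
\[
    \mBMP(X, B, \sOm_X \otimes \sL_X^i \otimes \sN) \restrict{V} = \mH^0(X, \sHLS(\sOm_X, B) \otimes \sL_X^i \otimes \sN) \restrict{V},
\]
where the restriction to $V$ is understood in the sense of localizing sections at $s$. This is the main obstacle of the proof: the idea is to follow the graded-ring argument of \myref{lem: B0-Keeler}, applying Keeler's Fujita-type vanishing on $U$ (where $\sL_X$ is ample) to obtain the uniform bound on $i_0$ needed to identify the two sides after inverting $s$. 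An alternative would be to construct a modification of $(X,\sM)$ on which the line bundle becomes globally $f$-ample (extending $\sM \restrict U$) and then appeal to Proposition \ref{prop: RelB0-Keeler} together with Proposition \ref{prop: RelB0-fmu}, but the direct localized argument seems cleaner.

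With this localized Keeler-type statement in hand, the rest of the proof mirrors that of Proposition \ref{prop: RelB0-Keeler}. Setting $\sL_X = \sM \otimes f^* \sL^l$ and $\sN = f^* \sL^i$ for $i \ge 0$, I would obtain for $m$ sufficiently large and every $i \ge 0$:
\[
    \mBMP(X, B, \sOm_X \otimes \sM^m \otimes f^* \sL^{ml+i}) \restrict{V} = \mH^0(X, \sHLS(\sOm_X, B) \otimes \sM^m \otimes f^* \sL^{ml+i}) \restrict{V}.
\]
For $i \gg 0$, \myref{lem: suffample} identifies the left-hand side with sections over $V$ of $\relBWM{f}{X}{B}{\sM^m} \otimes \sL^{ml+i}$, while Serre vanishing combined with the projection formula identifies the right-hand side with the analogous sections of $f_*(\sHLS(\sOm_X, B) \otimes \sM^m) \otimes \sL^{ml+i}$. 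Hence the inclusion between these two coherent subsheaves of $f_*(\sOm_X \otimes \sM^m)$ becomes an equality after restriction to $V$, which is the desired conclusion.
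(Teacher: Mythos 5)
Your overall strategy is not the one the paper uses, and it contains a genuine gap at its central step. The paper proves this proposition by exactly the route you mention and then dismiss: after replacing $M$ by a multiple that is $f\restrict{U}$-free, it blows up the relative base locus, splits off the relatively fixed part, passes to $X'' = \relProj_X \bigoplus_l f'_*\sO_{X'}(lM')$ where $\sO_{X''}(1)$ is genuinely $f''$-ample on all of $X''$, applies the global \myref{prop: RelB0-Keeler} there, and transfers the conclusion back over $V$ using \myref{prop: RelB0-fmu} and \myref{lem: RelB0-Op} (together with the fact that $\mu$ and $\mu''$ are isomorphisms over $V$ and that $\sHLS$ is local). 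This avoids ever having to localize a statement about $\mBMP$.

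The step you leave unproved --- the ``localized variant of \myref{lem: B0-Keeler}'' --- is precisely the difficulty, and the justification you sketch does not go through as stated. Keeler's Fujita-type vanishing applies to an ample line bundle on a \emph{projective} scheme; $\sL_X = \sM \otimes f^*\sL^l$ is ample only on the open subset $U$, and $U$ is not projective, so there is no vanishing statement to invoke ``on $U$''. More seriously, the graded-module argument of \myref{lem: B0-Keeler} rests on the facts that $X = \Proj$ of the section ring of an ample line bundle and that $\sHLS(\sOm_X,B)$ is the sheaf associated to the graded module $J$ of $\bmplus$-stable sections; when $\sL_X$ is not ample on $X$, the section ring no longer recovers $X$ and this correspondence breaks down. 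The modules $\mBMP(X,B,\cdot)$ are intrinsically global objects, defined as intersections over all finite covers of the projective scheme $X$, and there is no evident way to ``localize them at $s$'' and retain control; indeed, comparing the subsheaf they generate with $\sHLS$ over $V$, uniformly in the nef twist, is essentially the content of the proposition being proved. So as written the argument is circular at its key point. To repair it you should carry out the alternative you mention: pass to a model on which the bundle is honestly relatively ample everywhere and reduce to \myref{prop: RelB0-Keeler}, as the paper does.
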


\begin{proof}
We may assume that $V \neq \emptyset$.
For $j \gg 0$, the divisor $jM \restrict{U}$ is $f \restrict{U}$-free. By replacing $M$ by $jM$, we can assume that $j = 1$: more precisely, we reduce the assertion for $(M, t)$ to the assertions for $(jM, t/j), (jM, (t+1)/j), \dotsc, (jM, (t+j-1)/j)$. Let $\mu \colon X' \to X$ be the normalized blowup along the relative base locus $\relBs{f}(M)$; then $\mu^*M = M' + F'$, where $F'$ is the (relatively) fixed part. Define $f' \coloneqq f \circ \mu \colon X' \to Y$, $X'' \coloneqq \relProj_X \bigoplus_{l \ge 0} f'_* \sO_{X'}\mleft(lM'\mright)$, and $\sO_{X''}\mleft(M''\mright) = \sM'' \coloneqq \sO_{X''}(1)$. Let
\begin{DIFnomarkup}
\[
    \begin{tikzcd}
        X' \arrow[r, "\mu''"] & X'' \arrow[r, "f''"] & Y
    \end{tikzcd}
\]
\end{DIFnomarkup}be the natural morphisms. Then $\mu''^*M''=M'$. By \myref{prop: RelB0-Keeler} and the $f''$-ampleness of $\sM''$, we have
\begin{equation}\label{eq: RelB0-KeelerOp}
    \relBWM{f''}{X''}{tM''}{\mleft(\sM''\mright)^m} = f''_*\mleft(\sHLS\mleft(\sOm_{X''}, tM''\mright) \otimes \mleft(\sM''\mright)^m\mright)
\end{equation}
for sufficiently large $m$.
Note that $\mu$ and $\mu''$ are isomorphic over $V$.
Write $U'' \coloneqq (f'')^\inv(V)$.
Thus we see that
\begin{align*}
    &\mathrel{\phantom{=}} \relBWM{f}{X}{B}{\sM^m} \restrict{V} \\
    &= \relBWM{f'}{X'}{\mu^*B}{\mu^*\sM^m} \restrict{V} &&\text{by \myref{prop: RelB0-fmu}} \\
    &= \restrictx{\relBWM{f'}{X'}{tM'}{\mleft(\sM'\mright)^m}}{V} &&\text{by \myref{lem: RelB0-Op}} \\
    &= \restrictx{\relBWM{f''}{X''}{tM''}{\mleft(\sM''\mright)^m}}{V} &&\text{by \myref{prop: RelB0-fmu}} \\
    &= \restrictx{f''_*\mleft(\sHLS\mleft(\sOm_{X''}, tM''\mright) \otimes \mleft(\sM''\mright)^m\mright)}{V} &&\text{by \myref{eq: RelB0-KeelerOp}} \\
    &= \mleft(\restrictx{f''}{U''}\mright)_*\mleft(\sHLS\mleft(\sOm_{U''}, \restrictx{tM''}{U''}\mright) \otimes \restrictx{\mleft(\sM''\mright)^m}{U''}\mright) \\
    &= f_*(\sHLS(\sOm_X, B) \otimes \sM^m) \restrict{V}\text.
\end{align*}
\end{proof}

To show \myref{main: main}, we use \myref{thm: RelGgOp} in the following situation.
\begin{remark}\label{rmk: BigSA}
Let $B$ be a $\QQ$-Cartier divisor on $X$ and let $\sL = \sO_Y(L)$ be an ample line bundle on $Y$. Suppose that $V \subset Y$ is a dense open set, $U \coloneqq f^\inv(V)$, $-B \restrict{U}$ is $f \restrict{U}$-ample, and $-B-\delta f^*L$ is semiample over $U$ for some $\delta > 0$.

Then $\BBp(-B) \cap U = \emptyset$. In particular, $-B$ is semiample over $U$ and big.
\end{remark}

We now prove \myref{main: main}.

\begin{theorem}\label{thm: RelFuj}
Suppose that $X$, $Y$, $f$, and $n$ are as in \myref{set: RelB0}. Assume that $\Delta$ is an effective $\QQ$-divisor on $X$ and $\dK_X + \Delta$ is $\QQ$-Cartier with index $r \ge 1$. Let $V$ be an open set in $Y$ and $U \coloneqq f^\inv(V)$. Let $j \ge 1$ and let $\sL$ be an ample line bundle on $Y$ such that $\sL^j$ is globally generated.
Assume
\begin{enumerate}
\item \label{enum: relFujAmp}$(\dK_X + \Delta) \restrict{U}$ is $f \restrict{U}$-ample.
\item \label{enum: relFujTau}$\sHLS(\sO_X, \Delta) \restrict{U} = \sO_U$. (Recall that the left-hand side is the $\bmplus$-test ideal by \cite{HLS21}: see \myref{def: sHLS}.)
\end{enumerate}

Then, there exists $m_0 \ge 1$ such that
\[
    f_* \sO_X(m(\dK_X + \Delta)) \otimes \sL^l
\]
is globally generated over $V$ for every $m \ge m_0$ divisible by $r$ and $l \ge m(jn+1)$.
\end{theorem}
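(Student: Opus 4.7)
The plan is to combine three ingredients developed earlier: \myref{prop: RelB0-KeelerOp} to identify $f_*\sO_X(m(\dK_X+\Delta))$ with a $\bmplus$-stable direct image over $V$, \myref{lem: RelB0-proj} to absorb part of the twist by $\sL$ into the line bundle on $X$, and Main Theorem~\myref{main: relB0} (\myref{thm: RelGgOp}) applied with the globally generated ample line bundle $\sL^j$ in place of $\sL$. The bound $l\ge m(jn+1)$ decomposes as $jn$ (matching the exponent $(\sL^j)^n$ produced by \myref{main: relB0}) plus the remaining $l-jn$ copies of $\sL$ absorbed into the line bundle on $X$.

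First, I would apply \myref{prop: RelB0-KeelerOp} to the $f\restrict{U}$-ample line bundle $\sM_0\coloneqq\sO_X(r(\dK_X+\Delta))$ with $t=1/r$, so that $tM_0=\dK_X+\Delta$ and $\sM_0^{m/r}=\sO_X(m(\dK_X+\Delta))$. For all sufficiently large multiples $m$ of $r$, this produces
\[
\relBOM{f}{X}{\Delta}{\sO_X(m(\dK_X+\Delta))}\restrict{V} = f_*\bigl(\sHLS(\sO_X,\Delta)\otimes\sO_X(m(\dK_X+\Delta))\bigr)\restrict{V},
\]
and the second hypothesis collapses the right-hand side to $f_*\sO_X(m(\dK_X+\Delta))\restrict{V}$. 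Then \myref{lem: RelB0-proj} gives
\[
\relBOM{f}{X}{\Delta}{\sO_X(m(\dK_X+\Delta))}\otimes\sL^l = \relBOM{f}{X}{\Delta}{\sO_X(m(\dK_X+\Delta))\otimes f^*\sL^{l-jn}}\otimes\sL^{jn},
\]
recasting the target as an instance of the conclusion of \myref{main: relB0}.

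Now I would apply \myref{main: relB0} with $\sL$ replaced by $\sL^j$ and with $\sM\coloneqq\sO_X(m(\dK_X+\Delta))\otimes f^*\sL^{l-jn}$. The required input is that $M-\dK_X-\Delta = (m-1)(\dK_X+\Delta)+(l-jn)f^*L$ be semiample over $U$ and big. Its restriction to $U$ is a sum of an $f\restrict{U}$-ample divisor and a pulled-back one, hence $f\restrict{U}$-ample; bigness on $X$ follows for every $l-jn\ge1$ from the ampleness of $\dK_X+\Delta$ on the generic fiber of $f$ combined with the ample pullback $f^*L$, by an asymptotic Riemann--Roch computation via the projection formula. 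For the semiampleness over $U$, I would invoke \myref{rmk: BigSA}: after fixing a rational $c$ for which $(\dK_X+\Delta)+cf^*L$ is semiample over $U$, the bound $l\ge m(jn+1)$ leaves room to pick $\delta>0$ with $l-jn-\delta=(m-1)c$, and then $(m-1)\bigl((\dK_X+\Delta)+cf^*L\bigr)$ witnesses the semiampleness of the perturbed divisor over $U$. The main obstacle is this last verification: producing a single $m_0$ that works simultaneously with \myref{prop: RelB0-KeelerOp} and with the semiampleness argument, which amounts to controlling the threshold $c$ uniformly in $m$ via the $f\restrict{U}$-ampleness hypothesis and a Fujita-type regularity argument on $Y$ applied to a fixed base-point-free multiple of $\dK_X+\Delta$.
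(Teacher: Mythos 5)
Your overall architecture matches the paper's: identify $f_*\sO_X(m(\dK_X+\Delta))$ with the $\bmplus$-stable direct image over $V$ via \myref{prop: RelB0-KeelerOp} and hypothesis (2), absorb all but $jn$ copies of $\sL$ into the line bundle on $X$ via \myref{lem: RelB0-proj}, and apply \myref{thm: RelGgOp} to $B = -\bigl((m-1)(\dK_X+\Delta)+(l-jn)f^*L\bigr)$, whose negative is semiample over $U$ and big by \myref{rmk: BigSA} once one knows a suitable semiampleness statement. Up to that point the proposal is sound.

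The genuine gap is the one you flag yourself at the end and do not close: why is the semiampleness threshold
\[
\epsilon \coloneqq \inf\setIn{t \in \QQge{0}}{\text{$\dK_X+\Delta+tf^*L$ is semiample over $U$}}
\]
at most $jn+1$? Your argument fixes some $c$ with $(\dK_X+\Delta)+cf^*L$ semiample over $U$ and needs $l-jn \gtrsim (m-1)c$; with the stated bound $l \ge m(jn+1)$ this forces $c \le jn+1$, which is not known a priori — $f\restrict{U}$-ampleness alone only gives finiteness of $\epsilon$, and no "Fujita-type regularity argument on $Y$" will produce the numerical bound, since $\epsilon$ measures positivity along the fibers against $f^*L$, not regularity on the base. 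The paper closes this by a bootstrap: granting the claim for a single value $s = \floor{(m-1)\epsilon}+1$, the resulting global generation of $f_*\sO_X(m(\dK_X+\Delta))\otimes\sL^{s+jn}$ over $V$ pulls back, and the $f\restrict{U}$-freeness of $m(\dK_X+\Delta)$ upgrades it to global generation of $\sO_X(m(\dK_X+\Delta))\otimes f^*\sL^{s+jn}$ over $U$; hence $\dK_X+\Delta+\frac{s+jn}{m}f^*L$ is semiample over $U$, so $\epsilon \le \frac{s+jn}{m} \le \frac{(m-1)\epsilon+1+jn}{m}$, which rearranges to $\epsilon \le jn+1$. Only then can one take $s=(m-1)(jn+1)+1$ and land exactly on $l = m(jn+1)$. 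Without this self-improvement step your proof yields global generation only for $l \ge m\,jn + (m-1)\epsilon + O(1)$ with an ineffective $\epsilon$, which is strictly weaker than the theorem.
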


\begin{proof}
We may assume $V \neq \emptyset$.
From \eqref{enum: relFujAmp} and \myref{prop: RelB0-KeelerOp}, we see that
\[
    \relBOM{f}{X}{\Delta}{\sO_X(m(\dK_X+\Delta))} \to f_*(\sHLS(\sO_X, \Delta) \otimes \sO_X(m(\dK_X+\Delta)))
\]
is isomorphic over $V$ for $m \gg 0$ with $r \divides m$.
Therefore, by \eqref{enum: relFujAmp} and \eqref{enum: relFujTau}, there exists $m_0 \gg 0$ such that $m\mleft(\dK_X+\Delta\mright) \restrict{U}$ is $f \restrict{U}$-free and
\[
    \relBOM{f}{X}{\Delta}{\sO_X(m(\dK_X+\Delta))} \to f_*\sO_X(m(\dK_X+\Delta))
\]
is isomorphic over $V$ for every $m \ge m_0$ with $r \divides m$.

Take a divisor $L$ such that $\sL = \sO_Y(L)$. Set
\[
    \epsilon \coloneqq \inf \setIn{t \in \QQge{0}}{\text{$\dK_X + \Delta + tf^*L$ is semiample over $U$}}\text,
\]
which is finite by \eqref{enum: relFujAmp}.

\begin{claim}\label{claim: RelFuj}
For an integer $m \ge m_0$ with $r \divides m$ and integer $s > (m-1) \epsilon$, 
\[
    f_* \sO_X(m(\dK_X + \Delta)) \otimes \sL^{s + jn}
\]
is globally generated over $V$.
\end{claim}

We show \myref{claim: RelFuj}. From the definition of $\epsilon$ and \myref{rmk: BigSA}, we deduce that $(m-1)(\dK_X + \Delta) + sf^*L$ is semiample over $U$ and big. Hence we see from \myref{thm: RelGgOp} that
\[
    \relBOM{f}{X}{\Delta}{\sO_X(m(\dK_X+\Delta))} \otimes \sL^{s+jn}
\]
is globally generated over $V$. Combined with the definition of $m_0$, this completes the proof of \myref{claim: RelFuj}.

Let $m \ge m_0$ with $r \divides m$. Applying \myref{claim: RelFuj} to $s = s_m \coloneqq \floor{(m-1)\epsilon} + 1$, we see that $f_* \sO_X(m(\dK_X + \Delta)) \otimes \sL^{s_m + jn}$ is globally generated over $V$. Hence
\[
    f^*f_*\sO_X(m(\dK_X + \Delta)) \otimes f^*\sL^{s_m + jn}
\]
is globally generated over $U$, and so is
\[
    \sO_X(m(\dK_X + \Delta)) \otimes f^*\sL^{s_m + jn}
\]
since $m(\dK_X+\Delta) \restrict{U}$ is $f \restrict{U}$-free. From the definition of $\epsilon$, it follows that
\[
    \epsilon \le \frac{s_m + jn}{m}\text{,}
\]
which gives
\[
    \epsilon \le jn+1
\]
since $s_m = \floor{(m-1)\epsilon} + 1 \le (m-1)\epsilon + 1$.

We now apply \myref{claim: RelFuj} to $s = (m-1)(jn+1) + 1$, completing the proof.
\end{proof}

\begin{remark}\label{rmk: RelFujSing}
Suppose that $(R, \maxmR)$ is a complete DVR of mixed characteristic and $X \to \Spec R$ is flat.
In this case, by \myref{rmk: pPlusRegular}, the assumption \eqref{enum: relFujTau} in \myref{thm: RelFuj} can be replaced by the following condition: for every $x \in U$, $(\completion{\sO_{X,x}}, \Delta \restrict{\completion{\sO_{X,x}}})$ is $\text{BCM}$-regular if the residue characteristic of $\sO_{X,x}$ is $p$, and is KLT if it is $0$.
Therefore we get \myref{main: main}.
\end{remark}

We give a quick application of \myref{thm: RelFuj}.

\begin{proposition}
Suppose that $X$ and $Y$ are regular.
Assume that $\sN$ is a line bundle on $Y$ and $\sOm_X$ is numerically equivalent to $f^*\sN$. Let $\sL$ be a globally generated ample line bundle on $Y$. Then $\sN \otimes \sL^{n+1}$ is nef.
\end{proposition}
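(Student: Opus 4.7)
The plan is to apply \myref{thm: RelFuj} with a small boundary divisor depending on a parameter $k\ge 2$, use the numerical equivalence $\sOm_X\equiv f^*\sN$ and the projection formula to rewrite the resulting direct image in terms of powers of $\sM\coloneqq\sN\otimes\sL^{n+1}$, and conclude nefness of $\sM$ by the limit $k\to\infty$.

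Concretely, by Bertini (valid since $X$ is regular, after a finite \'etale extension of $R$ if needed to ensure an infinite residue field, cf.\ \myref{lem: RelB0-et}), pick an effective smooth very ample Cartier divisor $H$ on $X$. For each integer $k\ge 2$ set $\Delta_k\coloneqq\frac{1}{k}H$. Then $\dK_X+\Delta_k$ is $\QQ$-Cartier of index $k$, and the pair $(X,\Delta_k)$ is regular with $\floor{\Delta_k}=0$ and SNC support, so $\sHLS(\sO_X,\Delta_k)=\sO_X$. Because $\dK_X\equiv f^*N$ is $f$-numerically trivial and $H$ is $f$-ample, $\dK_X+\Delta_k$ is $f$-ample on all of $Y$. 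Hence \myref{thm: RelFuj} (with $V=Y$ and $j=1$) yields $s_0=s_0(k)$ such that
\[
    f_*\sO_X(ks\dK_X+sH)\otimes\sL^{ks(n+1)}
\]
is globally generated on $Y$ for every $s\ge s_0$.

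The line bundle $\eta\coloneqq\sO_X(k\dK_X)\otimes f^*\sN^{-k}$ is numerically trivial on $X$ by hypothesis, so the projection formula gives
\[
    f_*\sO_X(ks\dK_X+sH)\cong\sN^{ks}\otimes\sG_s,\qquad\sG_s\coloneqq f_*\bigl(\sO_X(sH)\otimes\eta^s\bigr),
\]
and hence $\sM^{ks}\otimes\sG_s$ is globally generated on $Y$ for $s\ge s_0$. For any integral curve $C\subset Y$, restricting to $C$ and quotienting by torsion produces a globally generated torsion-free sheaf of generic rank $r_s$, whose degree is therefore non-negative:
\[
    ks\,r_s\,\deg_C\sM+\deg_C(\sG_s|_C/\mathrm{tors})\ge 0.
\]
Since $\eta$ is numerically trivial, its class vanishes in $N^1_\QQ(X)$, so twisting by $\eta^s$ does not affect Euler characteristics. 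Asymptotic Riemann--Roch on the generic fiber $F$ of $f$ and on $X_C\coloneqq f^{-1}(C)$ (with relative dimension $e\coloneqq\dim X-\dim Y$) then gives $r_s\sim s^e(H|_F)^e/e!$ and $\deg_C(\sG_s|_C/\mathrm{tors})\le s^{e+1}(H|_{X_C})^{e+1}/(e+1)!+O(s^e)$. Dividing by $ks\,r_s$ and letting $s\to\infty$ yields
\[
    \deg_C\sM\ge-\frac{(H|_{X_C})^{e+1}}{k(e+1)(H|_F)^e}.
\]
The right-hand side is of order $1/k$, so letting $k\to\infty$ gives $\deg_C\sM\ge 0$. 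As $C$ was arbitrary, $\sN\otimes\sL^{n+1}$ is nef.

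The main obstacle is the asymptotic bookkeeping at the end: one must carefully verify that the numerical triviality of $\eta$ removes its contribution to the leading terms of the relevant Euler characteristics, and that base change and Serre-type vanishing make the Leray identification $\chi(C,\sG_s|_C)\sim\chi(X_C,\sO_X(sH)|_{X_C})$ valid for $s\gg 0$; this is precisely what guarantees the lower bound for $\deg_C\sM$ has the explicit $1/k$ dependence that drives the limit argument.
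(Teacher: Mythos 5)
Your setup coincides with the paper's: both proofs produce a regular (smooth) ample effective divisor $H$ by Bertini, set $\Delta=H/k$ so that $\sHLS(\sO_X,\Delta)=\sO_X$ and $\dK_X+\Delta$ is $f$-ample by the numerical equivalence $\dK_X\numeq f^*N$, and feed this into \myref{thm: RelFuj}. The divergence — and the gap — is in how you extract nefness of $\sN\otimes\sL^{n+1}$ from the global generation of $f_*\sO_X(m(\dK_X+\Delta))\otimes\sL^{m(n+1)}$. You stay on $Y$ and run an intersection-theoretic asymptotic argument on a test curve $C$, and this step does not go through as written. The identification $\sG_s\restrict{C}\cong (f_C)_*\bigl(\sO_X(sH)\otimes\eta^s\restrict{X_C}\bigr)$ and the Leray computation $\chi(C,\sG_s\restrict{C})=\chi(X_C,\cdot)$ require cohomology and base change, which needs $\sO_X(sH)\otimes\eta^s$ to be flat over $Y$, i.e.\ $f$ to be flat; but in this proposition $f$ is only a surjection between regular integral projective $R$-schemes (\myref{set: RelB0}), so fibers may jump in dimension. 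Over a curve $C$ contained in the jumping locus, the generic rank $r_s$ grows like $s^{e'}$ with $e'$ the dimension of the fiber over the generic point of $C$, while $\dim X_C$ (hence the degree of the Snapper polynomial bounding $\deg_C(\sG_s\restrict{C}/\mathrm{tors})$) can exceed $e'+1$ because of larger special fibers; then $\deg_C(\sG_s\restrict{C})/(ks\,r_s)$ need not stay bounded and the limit argument collapses. There are further unaddressed issues (degree and Riemann--Roch for torsion-free sheaves on singular test curves, and the meaning of $\chi$ for curves proper over $R$ rather than over a field), which you yourself flag as "the main obstacle" without resolving.

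The missing idea that makes all of this unnecessary is to transfer the global generation from $Y$ up to $X$ before testing on curves. For $m\gg0$ divisible by $k$, the divisor $m(\dK_X+\Delta)$ is $f$-free, so global generation of $f_*\sO_X(m(\dK_X+\Delta))\otimes\sL^{m(n+1)}$ on $Y$ implies that the line bundle $\sO_X(m(\dK_X+\Delta))\otimes f^*\sL^{m(n+1)}$ is globally generated on $X$, hence nef. Therefore $\dK_X+\Delta+(n+1)f^*L$ is nef; by $\dK_X\numeq f^*N$ so is $f^*N+H/k+(n+1)f^*L$; letting $k\to\infty$ gives that $f^*(N+(n+1)L)$ is nef (a limit of nef classes), and nefness descends along the surjection $f$ since every integral curve in $Y$ is dominated by one in $X$. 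I recommend replacing your curve-counting step by this argument.
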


\begin{proof}
Take divisors $N$ and $L$ such that $\sN = \sO_Y(N)$ and $\sL = \sO_Y(L)$.
By \cite[Theorem 2.17]{BMP+23}, there is an ample effective divisor $A$ on $X$ that is regular.
Let $l > 1$ and $\Delta \coloneqq A/l$.
By \cite[Proposition 4.24]{HLS21}, we have $\sHLS(\sO_X, \Delta) = \sO_X$.
Since $\dK_X \numeq f^*N$, we see that $\dK_X+\Delta$ is $f$-ample.
Therefore by \myref{thm: RelFuj},
\[
    f_*\sO_X(m(\dK_X+\Delta)) \otimes \sL^{m(n+1)}
\]
is globally generated for all $m \gg 0$ with $l \divides m$.
Since $m(\dK_X+\Delta)$ is $f$-free for $m \gg 0$, it follows that
\[
    \sO_X(m(\dK_X+\Delta)) \otimes f^*\sL^{m(n+1)}
\]
is globally generated.
Hence $\dK_X+\Delta+(n+1)f^*L$ is nef, and so is $f^*N+A/l+(n+1)f^*L$. Letting $l \to \infty$, we deduce that $f^*(N+(n+1)L)$ is nef, and therefore $\sN \otimes \sL^{n+1}$ is nef.
\end{proof}

\section{Weak positivity}
In this section, we show \myref{main: wp}.
The proof is based on Viehweg's fiber product trick.
First, we give a lemma on relatively SNC divisors after recalling the definition.

\begin{definition}\label{def: rel-snc}
Let $g \colon U \to V$ be a dominant smooth morphism of relative dimension $d$ between regular integral quasi-projective $R$-schemes.
Suppose that $D$ is a reduced divisor on $U$ and $D = \sum_{j \in J} D_j$ is the decomposition into irreducible components.
We say that $D$ is \emph{relatively SNC over $V$} if $\cap_{j \in J'} D_j \tomono U \to V$ is a smooth morphism of relative dimension $d - \abs{J'}$ for every nonempty subset $J' \subset J$.
\end{definition}

Note that if $D$ is relatively SNC over $V$, then $D$ is an SNC divisor on $U$ by \citestacks{0BIA}.

\begin{lemma}\label{lem: sncprod}
Suppose that $g_i \colon U_i \to V$ is a dominant smooth morphism of relative dimension $d_i$ between regular integral quasi-projective $R$-schemes for $i = 1,\dots,l$. Let $\Delta_i$ be a $\QQ$-divisor on $U_i$ such that $\floor{\Delta_i} = 0$ and $\Supp \Delta_i$ is relatively SNC over $V$ for $i = 1,\dots,l$.

Set $U^{(l)} \coloneqq U_1 \times_V \dots \times_V U_l$. Write $\pr_i \colon U^{(l)} \to U_i$ for the $i$th projection. Define $\Delta^{(l)} \coloneqq \sum_{i=1}^l \pr_i^* \Delta_i$. Then $\floor{\Delta^{(l)}} = 0$ and $\Supp \Delta^{(l)}$ is relatively SNC over $V$. In particular, $\Supp \Delta^{(l)}$ is an SNC divisor on $U^{(l)}$.
\end{lemma}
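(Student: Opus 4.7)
The plan is to exploit two smoothness facts. First, each projection $\pr_i \colon U^{(l)} \to U_i$ is smooth, being the base change along $g_i$ of the smooth morphism $\prod_{k \neq i} U_k \to V$. Second, because $D_i \to V$ is smooth and $V$ is regular, the reduced scheme $D_i$ is itself regular; hence its irreducible components $D_{i,j}$ are pairwise disjoint (a reduced closed subscheme of a regular Noetherian scheme is regular iff its components are pairwise disjoint). Consequently each preimage $\pr_i^{-1}(D_{i,j}) \isom D_{i,j} \times_V \prod_{k \neq i} U_k$ is smooth over $V$, and its own irreducible components are pairwise disjoint as well.

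Next I would verify that for distinct pairs $(i,j) \neq (i',j')$, the Cartier divisors $\pr_i^{-1}(D_{i,j})$ and $\pr_{i'}^{-1}(D_{i',j'})$ share no irreducible component. The case $i = i'$ is immediate from $D_{i,j} \cap D_{i,j'} = \emptyset$. For $i \neq i'$, their scheme-theoretic intersection equals $D_{i,j} \times_V D_{i',j'} \times_V \prod_{k \neq i,i'} U_k$, which is smooth over $V$ of codimension $2$ in $U^{(l)}$ and therefore cannot contain a codimension-one component. Writing $\Delta_i = \sum_j a_{i,j} D_{i,j}$ with $0 < a_{i,j} < 1$ (from $\floor{\Delta_i} = 0$ and $\Supp \Delta_i = D_i$), it follows that the prime decomposition of $\Delta^{(l)} = \sum_{i,j} a_{i,j} \pr_i^* D_{i,j}$ has every coefficient strictly less than $1$, so $\floor{\Delta^{(l)}} = 0$.

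For the SNC property, fix $x \in D^{(l)}$ and set $x_i \coloneqq \pr_i(x)$. By disjointness, $x_i$ lies in at most one component $D_{i,j_i}$; let $S$ be the set of indices $i$ with $x_i \in D_i$, and for $i \in S$ let $t_i \in \sO_{U_i, x_i}$ be a local equation of $D_{i,j_i}$. The irreducible components of $D^{(l)}$ through $x$ are then locally cut out by $\pr_i^* t_i$ for $i \in S$, and each is regular by the first paragraph. To check transversality I would invoke the canonical decomposition $\Omega_{U^{(l)}/V} \isom \bigoplus_i \pr_i^* \Omega_{U_i/V}$, standard for a fiber product of smooth $V$-schemes. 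Smoothness of $D_{i,j_i} \to V$ inside $U_i \to V$ makes the conormal sheaf of $D_{i,j_i}$ inject into $\Omega_{U_i/V} \restrict{D_{i,j_i}}$, so $\mathrm{d} t_i$ is non-zero in $\Omega_{U_i/V} \otimes k(x_i)$. The classes $\pr_i^* \mathrm{d} t_i$ for $i \in S$ then lie in distinct summands of $\Omega_{U^{(l)}/V} \otimes k(x)$ and are linearly independent; via the surjection $\Omega_{U^{(l)}} \twoheadrightarrow \Omega_{U^{(l)}/V}$ they remain linearly independent in $\mathfrak{m}_x / \mathfrak{m}_x^2$. Hence $\{\pr_i^* t_i\}_{i \in S}$ extends to a regular system of parameters at $x$, which is exactly the SNC condition.

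The main obstacle will be the careful bookkeeping of irreducible components: each $\pr_i^{-1}(D_{i,j})$ need not be irreducible, only smooth (hence a disjoint union of its components), so one must argue separately about disjointness within a single preimage and non-sharing of components across different $(i,j)$ before identifying the coefficients of $\Delta^{(l)}$. Once this combinatorial setup is in place, the transversality step via the decomposition $\Omega_{U^{(l)}/V} \isom \bigoplus_i \pr_i^* \Omega_{U_i/V}$ is essentially immediate.
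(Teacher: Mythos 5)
Your argument is correct and rests on the same two pillars as the paper's proof: smoothness of $D_i \to V$ over the regular base $V$ (so that all the relevant base changes and intersections are themselves smooth over $V$, hence regular), and flatness over $V$ (so that the intersections have the expected codimension). The difference lies in which characterization of strict normal crossings is verified at the end. The paper invokes the criterion of \citestacks{0BIA} --- each component is regular and every intersection $\bigcap_{i \in J} \pr_i^* D_i$ is regular with all components of codimension $\abs{J}$ --- and checks it by showing these intersections are smooth over $V$, with the codimension count supplied by a non-zerodivisor argument via flatness. You instead verify the regular-system-of-parameters definition directly, using the decomposition $\Omega_{U^{(l)}/V} \isom \bigoplus_i \pr_i^* \Omega_{U_i/V}$ to see that the local equations have linearly independent differentials in $\mathfrak{m}_x/\mathfrak{m}_x^2$. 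Both routes work; yours avoids checking the higher intersections at the cost of the (standard) cotangent computation, and it tracks the irreducible components of each $D_i$ more explicitly than the paper, which simply shrinks $U_i$ so that $D_i$ is principal. One small slip: your parenthetical claim that a reduced closed subscheme of a regular Noetherian scheme is regular \emph{iff} its irreducible components are pairwise disjoint is false in the ``if'' direction (a cuspidal plane curve is irreducible but not regular); fortunately you only use the true direction, namely that a regular Noetherian scheme has pairwise disjoint irreducible components.
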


\begin{proof}
By induction, we can assume that $l = 2$.
Note that for a prime divisor $D_1$ on $U_1$, $\pr_1^* D_1 = D_1 \times_V U_2$ is smooth over $V$ and the irreducible components of $\pr_1^* D_1$ are disjoint.
Let $\Supp \Delta_i = \sum_{j \in J_i} D_{ij}$ be the decomposition into irreducible components for $i = 1$, $2$. Then we have $\Supp \Delta^{(l)} = \sum_{j \in J_1} \pr_1^* D_{1j} + \sum_{j \in J_2} \pr_2^* D_{2j}$. We also obtain $\floor{\Delta^{(l)}} = 0$.
For subsets $J'_1 \subset J_1$, $J'_2 \subset J_2$, by assumption,
\[
    \bigcap_{j \in J'_1} \pr_1^* D_{1j} \cap \bigcap_{j \in J'_2} \pr_2^* D_{2j}
    = \mleft(\bigcap_{j \in J'_1} D_{1j} \mright) \times_V \mleft(\bigcap_{j \in J'_2} D_{2j} \mright)
\]
is smooth over $V$ of relative dimension $d_1+d_2 - \abs{J'_1} - \abs{J'_2}$.
Therefore, $\Supp \Delta^{(l)}$ is relatively SNC over $V$.
\end{proof}

We proceed to prove \myref{main: wp}.

\begin{setting}\label{set: wp}
Let $f \colon X \to Y$ be a smooth surjection between regular integral projective $R$-schemes. Define $n \coloneqq \dim Y_{\maxmR}$.

Let $\Delta$ be an effective $\QQ$-divisor on $X$ such that $\dK_X + \Delta$ is $\QQ$-Cartier with index~$r$. Suppose that $V \subset Y$ is an open set, $U \coloneqq f^\inv(V)$, and $(\dK_X + \Delta) \restrict{U}$ is $f \restrict{U}$-ample. Assume that $\floor{\Delta \restrict{U}} = 0$ and $\Supp \Delta\restrict{U}$ is relatively SNC over $V$.
\end{setting}

\begin{theorem}\label{thm: Wp}
In \myref{set: wp}, $\dK_{X/Y} + \Delta$ is weakly positive over $U$.
\end{theorem}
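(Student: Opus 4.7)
The plan is to apply Viehweg's fibre product trick. For each $l \ge 1$, I would form the $l$-fold fibre product $f^{(l)} \colon X^{(l)} \coloneqq X \times_Y \cdots \times_Y X \to Y$ (after passing, if necessary, to the connected component of $X^{(l)}$ that dominates $Y$), write $\pr_i \colon X^{(l)} \to X$ for the projections, and put $U^{(l)} \coloneqq (f^{(l)})^\inv(V)$ and $\Delta^{(l)} \coloneqq \sum_{i=1}^{l} \pr_i^* \Delta$. Since $f$ is smooth, so is $f^{(l)}$, hence $X^{(l)}$ is regular and $\dK_{X^{(l)}/Y} = \sum_i \pr_i^* \dK_{X/Y}$, so $\dK_{X^{(l)}/Y} + \Delta^{(l)} = \sum_i \pr_i^*(\dK_{X/Y} + \Delta)$.

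The first step is to verify the hypotheses of \myref{thm: RelFuj} for $(X^{(l)}, \Delta^{(l)})$ over $V$. Since $(\dK_X + \Delta)|_U$ is $f|_U$-ample, $(\dK_{X/Y} + \Delta)|_U$ is $f|_U$-ample, and the sum of its pullbacks along $\pr_1, \dots, \pr_l$ is $f^{(l)}|_{U^{(l)}}$-ample (it restricts to an ample divisor on each fibre $X_y^l$); hence $(\dK_{X^{(l)}} + \Delta^{(l)})|_{U^{(l)}}$ is $f^{(l)}|_{U^{(l)}}$-ample. The equality $\sHLS(\sO_{X^{(l)}}, \Delta^{(l)})|_{U^{(l)}} = \sO_{U^{(l)}}$ follows from \myref{lem: sncprod}, which yields $\floor{\Delta^{(l)}|_{U^{(l)}}} = 0$ and SNC support of $\Supp \Delta^{(l)}|_{U^{(l)}}$, combined with the last bullet of \myref{main: main}. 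Applying \myref{thm: RelFuj} and using flat base change (smoothness of $f$) to identify
\[
    f^{(l)}_* \sO_{X^{(l)}}\mleft(m(\dK_{X^{(l)}/Y} + \Delta^{(l)})\mright) \isom \mleft(f_* \sO_X(m(\dK_{X/Y} + \Delta))\mright)^{\otimes l}\text{,}
\]
I obtain that $(f_*\sO_X(m(\dK_{X/Y} + \Delta)))^{\otimes l} \otimes \sO_Y(m \dK_Y) \otimes \sL^{m(n+1)}$ is globally generated over $V$ for some $m \gg 0$ with $r \divides m$.

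The second step is to pull back to $X$. Because $m(\dK_X + \Delta)|_U$ is $f|_U$-free for $m \gg 0$, the adjunction map $f^* f_* \sO_X(m(\dK_{X/Y} + \Delta)) \to \sO_X(m(\dK_{X/Y} + \Delta))$ is surjective over $U$, and its $l$-th tensor power shows that
\[
    \sO_X\mleft(lm(\dK_{X/Y} + \Delta) + m f^* \dK_Y + m(n+1) f^* L\mright)
\]
is globally generated over $U$; equivalently, $\BB\mleft(\dK_{X/Y} + \Delta + \tfrac{1}{l}(f^* \dK_Y + (n+1) f^* L)\mright) \cap U = \emptyset$ for every $l \ge 1$. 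Given any ample $\QQ$-divisor $A$ on $X$, for $l$ large enough the divisor $A - \tfrac{1}{l}(f^* \dK_Y + (n+1) f^* L)$ is still ample; hence by subadditivity of $\BB$ and the fact that the stable base locus of an ample divisor is empty, $\BB(\dK_{X/Y} + \Delta + A) \cap U = \emptyset$. Since $A$ was arbitrary, $\BBm(\dK_{X/Y} + \Delta) \cap U = \emptyset$, proving weak positivity over $U$.

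The main obstacle I expect is ensuring that $X^{(l)}$ fits into the framework of \myref{thm: RelFuj}: $X^{(l)}$ is regular and equidimensional but not obviously integral, so one either works on a dominating integral component or invokes the extension of the $\bmplus$-formalism to normal equidimensional schemes indicated in the remark following the definition of $\mBMP$. The floor-zero and SNC properties of $\Delta^{(l)}|_{U^{(l)}}$ are exactly what \myref{lem: sncprod} was engineered to provide, so that technical input is already in place.
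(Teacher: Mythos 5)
Your argument is essentially the paper's proof: the same fiber-product trick, the same use of \myref{lem: sncprod} together with \cite[Proposition 4.24]{HLS21} to trivialize the $\bmplus$-test ideal of $(X^{(l)},\Delta^{(l)})$ over $U^{(l)}$, the same application of \myref{thm: RelFuj}, and the same limiting argument at the end. The one place you diverge is in descending from $X^{(l)}$ back to $X$: you invoke the isomorphism $f^{(l)}_*\sO_{X^{(l)}}(m(\dK_{X^{(l)}/Y}+\Delta^{(l)})) \isom (f_*\sO_X(m(\dK_{X/Y}+\Delta)))^{\otimes l}$ via flat base change, whereas the paper uses the multiplication (restriction-to-the-diagonal) map $(f^{(l)})_*\sO_{X^{(l)}}(m\sum_i\pr_i^*(\dK_{X/Y}+\Delta)) \to f_*\sO_X(lm(\dK_{X/Y}+\Delta))$, which is surjective over $V$ by relative ampleness, and then twists by $\sOm_Y^m$. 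Your identification is valid over $V$ (which is all you use), but flat base change alone does not give it: after the base-change step one still needs the projection formula with the sheaf $f_*\sO_X(m(\dK_{X/Y}+\Delta))$, which requires that sheaf to be locally free over $V$; this holds for $m\gg 0$ by relative Serre vanishing and cohomology-and-base-change (using that $(\dK_{X/Y}+\Delta)$ is relatively ample over $U$ and the line bundle is flat over $Y$), and that justification should be supplied. Either route then yields the global generation of $\sO_X(lm(\dK_{X/Y}+\Delta))\otimes f^*(\sO_Y(m\dK_Y)\otimes\sL^{m(n+1)})$ over $U$ via relative freeness, and your concluding step with $\BB$ and an arbitrary ample $A$ matches the paper's. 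Your caveat about the possible non-integrality of $X^{(l)}$ is a fair point that the paper passes over in silence; the remark following the definition of the $\bmplus$-stable sections, extending the formalism to normal equidimensional schemes, is the intended fix.
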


\begin{proof}
Let $\sL = \sO_Y(L)$ be a globally generated ample line bundle on $Y$. Fix $l \ge 1$. Let
\[
    X^{(l)} \coloneqq X \times_Y \dots \times_Y X, \quad \Delta^{(l)} \coloneqq \sum_{i=1}^l \pr_i^* \Delta\text{,}
\]
and let $f^{(l)} \colon X^{(l)} \to Y$ be the natural morphism. By \myref{lem: sncprod}, $\floor*{\restrictx{\Delta^{(l)}}{U^{(l)}}} = 0$ and $\Supp \restrictx{\Delta^{(l)}}{U^{(l)}}$ is SNC. Hence we obtain $\restrictx{\sHLS\mleft(\sO_{X^{(l)}}, \Delta^{(l)}\mright)}{U^{(l)}} = \restrictx{\sO_{X^{(l)}}}{U^{(l)}}$ by \cite[Proposition 4.24]{HLS21}. We have
\[
    \dK_{X^{(l)}/Y} + \Delta^{(l)} = \sum_{i=1}^l \pr_i^*(\dK_{X/Y} + \Delta)\text.
\]
We thus see that $\restrictx{\mleft(\dK_{X^{(l)}} + \Delta^{(l)}\mright)}{U^{(l)}}$ is $\restrictx{f^{(l)}}{U^{(l)}}$-ample. It follows from \myref{thm: RelFuj} that
\[
    \mleft(f^{(l)}\mright)_* \sO_{X^{(l)}}\mleft(m\mleft(\dK_{X^{(l)}} + \Delta^{(l)}\mright)\mright) \otimes \sL^{m(n+1)}
\]
is globally generated over $V$ for every $m \gg 0$ with $r \divides m$.

Let $m \gg 0$ with $r \divides m$.
Since $(\dK_{X/Y} + \Delta) \restrict{U}$ is $f \restrict{U}$-ample, we see that $m(\dK_{X/Y} + \Delta) \restrict{U}$ is $f \restrict{U}$-free.
Since $\restrictx{\mleft(\dK_{X^{(l)}} + \Delta^{(l)}\mright)}{U^{(l)}}$ is $\restrictx{f^{(l)}}{U^{(l)}}$-ample, the multiplication map
\[
    \mleft(f^{(l)}\mright)_*\sO_{X^{(l)}}\mleft(m\sum_{i=1}^l \pr_i^*(\dK_{X/Y} + \Delta)\mright) \to f_*\sO_X(lm(\dK_{X/Y} + \Delta))
\]
is surjective over $V$ (see \cite[Example 1.2.22]{LazI}). Thus it follows that
\[
    f_* \sO_X(lm(\dK_{X/Y} + \Delta)) \otimes \sOm_Y^m \otimes \sL^{m(n+1)}
\]
is globally generated over $V$.
By the $f \restrict{U}$-freeness of $m(\dK_{X/Y} + \Delta)$,
\[
    lm(\dK_{X/Y} + \Delta) + mf^*(\dK_Y + (n+1)L)
\]
is globally generated over $U$, and hence 
\[
    \dK_{X/Y} + \Delta + \frac{1}{l}f^*\mleft(\dK_Y + (n+1)L\mright)
\]
is semiample over $U$. This shows that $\dK_{X/Y} + \Delta$ is weakly positive over $U$. Indeed, taking an ample divisor $A$ on $X$ such that $A - f^*(\dK_Y + (n+1)L)$ is semiample, we see that
\[
    \dK_{X/Y} + \Delta + \frac{1}{l}A
\]
is semiample over $U$ for all $l \ge 1$, as desired.
\end{proof}

As a corollary, we obtain a global generation result similar to \cite[Theorem 1.7]{Fujn23} and \cite[Theorem 1.9]{Ejir24}.

\begin{proposition}\label{prop: Fujino}
We work in \myref{set: wp}. Let $\sL$ be a globally generated ample line bundle on $Y$. Then
\[
    f_*\sO_X(m(\dK_{X/Y} + \Delta)) \otimes \sOm_Y \otimes \sL^{n+1}
\]
is globally generated over $V$ for all $m \gg 0$ with $r \divides m$.
\end{proposition}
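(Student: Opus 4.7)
Here is my proof plan. Set $N \coloneqq \dK_Y + (n+1)L$ and $\sE_m \coloneqq f_* \sO_X(m(\dK_{X/Y} + \Delta))$; the goal is to show that $\sE_m \otimes \sO_Y(N)$ is globally generated over $V$ for all $m \gg 0$ with $r \divides m$. Since $f$ is smooth, the identity $\dK_X = \dK_{X/Y} + f^*\dK_Y$ and the projection formula convert \myref{thm: RelFuj}, applied to $(X, \Delta)$ with $\sL$ and $j=1$, into the global generation over $V$ of the naive twist $\sE_m \otimes \sO_Y(mN)$. The task is therefore to reduce the coefficient of $N$ from $m$ down to $1$.

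The key additional input is \myref{thm: Wp}, from which I would extract (by rereading the last paragraph of its proof) that for every integer $l \ge 1$, the $\QQ$-divisor $(\dK_{X/Y}+\Delta) + \tfrac{1}{l}f^*N$ is semiample over $U$; consequently some positive Cartier multiple $k_l(l(\dK_{X/Y}+\Delta) + f^*N)$ is base-point-free on $U$. For $l$ sufficiently divisible, a Bertini-type argument applied to the fibers of $f$ over residue fields produces a general effective Cartier divisor $E$ in this linear system whose support, together with $\Supp \Delta \restrict{U}$, is simple normal crossing on $U$. Setting $\Delta'(l) \coloneqq \Delta + \tfrac{1}{k_l l} E$, the pair $(X, \Delta'(l))$ still satisfies the hypotheses of \myref{thm: RelFuj}; in particular $\sHLS(\sO_X, \Delta'(l)) \restrict{U} = \sO_U$ by \cite[Proposition 4.24]{HLS21}.

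A direct computation yields
\[
    \dK_X + \Delta'(l) \sim_\QQ 2(\dK_{X/Y} + \Delta) + f^*\dK_Y + \tfrac{1}{l} f^*N,
\]
and applying \myref{thm: RelFuj} to $(X, \Delta'(l))$ with $\sL$ and expanding via the projection formula produces the global generation over $V$ of $\sE_M \otimes \sO_Y(\tau N)$ with ratio $\tau/M = \tfrac{l+1}{2l}$, for $M$ of the form $2lc$ with $c \gg 0$ in the appropriate arithmetic progression. Replacing the single perturbation by a multi-parameter version $\Delta + \sum_i \tfrac{1}{k_{l_i} l_i}E_i$, where the divisors $E_i$ are general members of successively refined base-point-free linear systems from the preceding step, the ratio $\tau/M$ can be driven as close to $0$ as desired; combined with the $f\restrict{U}$-freeness of $m(\dK_{X/Y}+\Delta)\restrict{U}$ for $m \gg 0$, this absorbs the residual twist and yields the target global generation of $\sE_m \otimes \sO_Y(N)$ over $V$.

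The main obstacle will be the combinatorial bookkeeping in the iteration or multi-perturbation---in particular, tracking Cartier indices across perturbations so that \myref{thm: RelFuj} remains applicable at each step---together with the Bertini-type verification that the $\bmplus$-test ideal condition is preserved in mixed characteristic. These steps can be carried out in the spirit of the analogous positive-characteristic arguments in \cite{Ejir24}.
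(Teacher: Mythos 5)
There is a genuine gap at the final step, and it is not a bookkeeping issue but a structural one. Every application of \myref{thm: RelFuj} to a (possibly perturbed) pair $(X,\Delta')$ produces global generation of $f_*\sO_X(M(\dK_X+\Delta'))\otimes\sL^{l}$ only for $M\ge m_0(\Delta')$ and $l\ge M(n+1)$; after rewriting $\dK_X=\dK_{X/Y}+f^*\dK_Y$, the twist you obtain on $f_*\sO_X(m(\dK_{X/Y}+\Delta))$ is always of the form $\sOm_Y^{M}\otimes\sL^{M(n+1)}\otimes(\text{extra})$ with $M\gg 0$. Your iteration does drive the \emph{ratio} $\tau/m$ of the residual twist to $0$, but $\tau$ itself still grows linearly in $m$, and global generation of $\sF\otimes\sO_Y(\tau N)$ for some $\tau>1$ never implies global generation of $\sF\otimes\sO_Y(N)$ --- one cannot ``remove'' positivity from a twist. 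The appeal to the $f\restrict{U}$-freeness of $m(\dK_{X/Y}+\Delta)\restrict{U}$ cannot repair this: freeness converts global generation of direct images on $Y$ into semiampleness statements for divisors on $X$ (this is exactly how \myref{thm: Wp} is proved), but it provides no passage in the reverse direction back to global generation of $f_*\sO_X(m(\dK_{X/Y}+\Delta))$ with a twist that is \emph{bounded independently of $m$}. In effect, your argument reproves a weak positivity statement, not the proposition.

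The missing ingredient is \myref{thm: RelGgOp}, the only result in the paper whose twist ($\sL^n$, independent of $m$) is of the required uniform shape. The intended route is: write the target sheaf as $\relBOM{f}{X}{\Delta}{\sO_X(m(\dK_{X/Y}+\Delta)+f^*(\dK_Y+L))}\otimes\sL^{n}$ (up to identifying the $\bmplus$-stable direct image with the honest direct image over $V$ via \myref{prop: RelB0-KeelerOp} and \cite[Proposition 4.24]{HLS21}); the relevant divisor $M-\dK_X-\Delta=(m-1)(\dK_{X/Y}+\Delta)+f^*L$ must be shown semiample over $U$ and big, and this is precisely where \myref{thm: Wp} enters, combined with the $f\restrict{U}$-ampleness and \myref{rmk: BigSA}. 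Your use of weak positivity as a source of divisors to perturb $\Delta$ is a reasonable instinct (and resembles arguments in \cite{Ejir24}), but here weak positivity should instead be used to verify the hypothesis of the effective global generation theorem for the $\bmplus$-stable direct image.
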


\begin{proof}
Take a divisor $L$ such that $\sL = \sO_Y(L)$.
Let $m \gg 0$ with  $r \divides m$. By \myref{thm: Wp}, $\dK_{X/Y} + \Delta$ is weakly positive over $U$. Since it is also $f \restrict{U}$-ample, $(m-1)(\dK_{X/Y} + \Delta) + f^*L$ is semiample over $U$ and big by \myref{rmk: BigSA}. From \myref{thm: RelGgOp}, it follows that
\[
    \relBOM{f}{X}{\Delta}{\sO_X(m(\dK_{X/Y}+\Delta))} \otimes \sOm_Y \otimes \sL^{n+1}
\]
is globally generated over $V$. The assertion now follows from \myref{prop: RelB0-KeelerOp} and \cite[Proposition 4.24]{HLS21}.
\end{proof}

We give a result related to the Iitaka conjecture.

\begin{proposition}\label{prop: Iitaka}
In \myref{set: wp},
\[
    f(\BBp(\dK_X+\Delta)) \cap V \subset \BBp(\dK_Y) \cap V\text{.}
\]
\end{proposition}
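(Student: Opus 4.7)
My plan is to argue the contrapositive: fix $y \in V$ with $y \notin \BBp(\dK_Y)$ and $x \in f^{-1}(y) \subset U$, and show that $x \notin \BBp(\dK_X+\Delta)$. First I would use the hypothesis $y \notin \BBp(\dK_Y)$ to pick an ample $\QQ$-Cartier divisor $A_Y$ on $Y$ and an effective $\QQ$-divisor $E_Y$ on $Y$ with $\dK_Y \sim_\QQ A_Y + E_Y$ and $y \notin \Supp E_Y$, and pull back to obtain $f^*\dK_Y \sim_\QQ f^*A_Y + f^*E_Y$ with $x \notin \Supp f^*E_Y$.

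Next I would apply \myref{prop: Fujino} with a globally generated ample line bundle $\sL = \sO_Y(L)$ on $Y$: for every sufficiently large $m$ divisible by $r$, the sheaf $f_*\sO_X(m(\dK_{X/Y}+\Delta)) \otimes \sOm_Y \otimes \sL^{n+1}$ is globally generated over $V$, and combining this with the $f|_U$-freeness of $m(\dK_{X/Y}+\Delta)|_U$ for $m \gg 0$ gives a global section of $m(\dK_{X/Y}+\Delta) + f^*(\dK_Y + (n+1)L)$ not vanishing at $x$. Rewriting in terms of $\dK_X+\Delta = \dK_{X/Y}+\Delta + f^*\dK_Y$ and substituting the Kodaira-type decomposition of $f^*\dK_Y$ from the previous step (and absorbing the effective $(m-1)f^*E_Y$, whose support misses $x$, into the effective part), I obtain
\[
    m(\dK_X+\Delta) \sim_\QQ E' + f^*B_Y,
\]
where $E'$ is an effective $\QQ$-divisor with $x \notin \Supp E'$ and $B_Y := (m-1)A_Y - (n+1)L$ is ample on $Y$ for $m \gg 0$.

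To finish I need to upgrade this $\QQ$-linear equivalence to one with a genuine ample on $X$ in place of $f^*B_Y$, plus an effective remainder avoiding $x$. For this I would invoke the weak positivity of $\dK_{X/Y}+\Delta$ over $U$ from \myref{thm: Wp} together with its $f|_U$-ampleness: for any ample $\QQ$-Cartier divisor $A_X$ on $X$, the relation $\dK_{X/Y}+\Delta + \epsilon A_X \sim_\QQ (\text{effective not containing } x)$ holds for every $\epsilon \in \QQ_{>0}$, and the combination $\dK_{X/Y}+\Delta + Nf^*A_Y$ becomes ample on $U$ for $N \gg 0$. Combining these with the displayed equivalence via a suitable convex-combination trick then produces the required Kodaira decomposition $m(\dK_X+\Delta) \sim_\QQ A_X + E_{\mathrm{new}}$ with $A_X$ ample on $X$ and $x \notin \Supp E_{\mathrm{new}}$, giving $x \notin \BBp(\dK_X+\Delta)$.

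The main obstacle is precisely this final conversion step: when $f$ has positive-dimensional fibres, $f^*B_Y$ is nef but not big on $X$, so Kodaira's lemma cannot be applied to it directly; the argument must use both the weak positivity of $\dK_{X/Y}+\Delta$ (to control the effective side) and its $f|_U$-ampleness (to boost nef pullbacks to something ample on a neighbourhood of the fibre $f^{-1}(y)$) in an essential, coordinated way.
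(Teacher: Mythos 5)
Your strategy is sound and, in substance, coincides with the paper's: both arguments rest on the weak positivity of $\dK_{X/Y}+\Delta$ over $U$, the $f\restrict{U}$-ampleness of $\dK_X+\Delta$, and the hypothesis $y\notin\BBp(\dK_Y)$, combined by adding a small relatively ample piece to upgrade a nef pullback. The differences are organizational. You argue pointwise, extracting an explicit section from \myref{prop: Fujino}; the paper instead quotes \myref{thm: Wp} directly in the form $\BBm(\dK_{X/Y}+\Delta)\cap U=\emptyset$ and runs the whole computation in the calculus of base loci, which lets it absorb an arbitrary small ample rather than the specific twist $f^*(\dK_Y+(n+1)L)$. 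The step you flag as the main obstacle is exactly where the paper's phrasing pays off, and your ``convex-combination trick'' does close: since $(\dK_X+\Delta)\restrict{U}$ is $f\restrict{U}$-ample and $\BBp(\dK_Y)\cap V=\emptyset$, one has $\BBp\bigl(\tfrac1l(\dK_X+\Delta)+f^*\dK_Y\bigr)\cap U=\emptyset$ for $l\gg0$ --- write $\tfrac1l(\dK_X+\Delta)+f^*\dK_Y$ as $\bigl[\tfrac1l(\dK_X+\Delta)+f^*A_Y\bigr]+f^*(\dK_Y-A_Y)$, where the bracket has augmented base locus missing $U$ by \myref{rmk: BigSA} and $\BB(f^*(\dK_Y-A_Y))\cap U=\emptyset$. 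Adding this to the weakly positive $\dK_{X/Y}+\Delta$ and using $\BBp(D_1+D_2)\subset\BBm(D_1)\cup\BBp(D_2)$ gives $\BBp\bigl((1+\tfrac1l)(\dK_X+\Delta)\bigr)\cap U=\emptyset$, which is the conclusion. In your notation the right move is thus not to seek an ample on $X$ inside $f^*B_Y$ alone, but to trade $f^*B_Y$ for $\tfrac1l(\dK_X+\Delta)+f^*B_Y$, whose augmented base locus already misses $U$; also note that ``ample on the open set $U$'' should be formalized as $\BBp(\cdot)\cap U=\emptyset$, as the paper does.
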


\begin{proof}
Replacing $V$ with $V \setminus \BBp(\dK_Y)$, we may assume that $\BBp(\dK_Y) \cap V = \emptyset$.
By \myref{thm: Wp}, we have
\[
    \BBm(\dK_{X/Y}+\Delta) \cap U = \emptyset\text{.}
\]
Since $(\dK_X+\Delta) \restrict{U}$ is $f\restrict{U}$-ample and $\BBp(\dK_Y) \cap V = \emptyset$, we see that for $l \gg 0$,
\[
    \BBp\mleft(\frac{1}{l}(\dK_X+\Delta) + f^*\dK_Y\mright) \cap U = \emptyset\text.
\]
Therefore we deduce that
\[
    \BBp\mleft((\dK_{X/Y}+\Delta) + \mleft(\frac{1}{l}(\dK_X+\Delta) + f^*\dK_Y\mright)\mright) \cap U = \emptyset\text.
\]
The left-hand side equals $\BBp(\dK_X+\Delta) \cap U$, and the assertion follows.
\end{proof}

\begin{remark}
By \myref{prop: Iitaka}, if $\dK_Y$ is big, then $\dK_X+\Delta$ is big.
This is regarded as a special case of the Iitaka conjecture in mixed characteristic.

However, this assertion can readily be seen from the analogous result in characteristic zero (see, for example, \cite[Theorem 1.2]{KoPa17}) since bigness can be checked on the generic fiber over $R$.
The interest of \myref{prop: Iitaka} is that it applies also to points of positive characteristic.
\end{remark}

We prove that images of Fano schemes under smooth morphisms are again Fano.

\begin{proposition}\label{prop: Fano}
Suppose that $f \colon X \to Y$ is a smooth surjection between regular integral projective $R$-schemes. Let $N$ be a $\QQ$-divisor on $Y$. Then we have the following.

\begin{enumerate}
\item \label{enum: fano-nef}If $-\dK_X - f^*N$ is nef, so is $-\dK_Y - N$.
\item \label{enum: fano-fano}If $-\dK_X - f^*N$ is ample, so is $-\dK_Y - N$.
\end{enumerate}
\end{proposition}

We present two proofs of \myref{prop: Fano}: one is based on \myref{thm: Wp}, and the other relies on a result in positive characteristic.
First, we give a lemma for the former proof.

\begin{lemma}\label{lem: Bertini-mixed}
Let $f \colon X \to Y$ be a surjection between normal integral projective $R$-schemes. Suppose that $f$ is smooth over a neighborhood of a closed point $y \in Y$. Let $\sA$ be an ample line bundle on $X$.

Then for every $m \gg 0$, there is an effective divisor $D \in \abs*{\sA^m}$ such that $\Supp D$ contains no components of $\fiber{X}{f}{y}$ and $D \tomono X \to Y$ is smooth over a neighborhood of $y$.
\end{lemma}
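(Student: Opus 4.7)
The plan is to produce $D$ by applying a Bertini-type theorem to the fibre $X_y \coloneqq \fiber{X}{f}{y}$, which is smooth and projective over $\kappa(y)$ by the smoothness of $f$ near $y$, and then lifting the resulting section to $X$. Transversality of $D$ with $X_y$ will propagate to a neighborhood of $y$ by openness of smoothness.

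Concretely, for $m \gg 0$ Serre vanishing yields the surjectivity of the restriction map $\mH^0(X, \sA^m) \toepi \mH^0\mleft(X_y, \sA^m \restrict{X_y}\mright)$ and the very ampleness of $\sA^m \restrict{X_y}$. A Bertini-type theorem applied to the smooth projective $\kappa(y)$-scheme $X_y$ with the very ample $\sA^m \restrict{X_y}$ then produces a section $\bar s \in \mH^0\mleft(X_y, \sA^m \restrict{X_y}\mright)$ whose divisor is smooth over $\kappa(y)$ and contains no irreducible component of $X_y$. Lifting $\bar s$ to $s \in \mH^0(X, \sA^m)$, I set $D \coloneqq \divisor_X(s)$; since $s \restrict{X_y} = \bar s$, the first required condition is immediate. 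For the smoothness condition, at each point of $D \cap X_y$ the local equation of $D$ restricts to a nonzero divisor on $X_y$, so by the fibral criterion of flatness (combined with flatness of $f$ near $y$) $D$ is flat over $Y$ there; the smoothness of the fibre $D \cap X_y$ then upgrades this to smoothness of $D \to Y$ at every point of $D \cap X_y$, and openness of smoothness spreads the conclusion to an open neighborhood of $y$ in $Y$.

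The main obstacle is the Bertini step when $\kappa(y)$ is finite, since the classical Bertini theorem requires an infinite base field. I would handle this either by invoking Poonen's Bertini theorem over finite fields, or by passing to a finite \'etale extension of $R$ (and hence of $Y$) with infinite residue field, applying classical Bertini on the base-changed fibre, and then descending the resulting section; the latter route follows the pattern of the \'etale base-change arguments already used in the paper, e.g., in \myref{lem: B0-et} and \myref{lem: RelB0-et}.
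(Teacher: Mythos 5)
Your proposal is correct and follows essentially the same route as the paper: surjectivity of the restriction map $\mH^0(X,\sA^m) \to \mH^0(X_y, \sA^m\restrict{X_y})$ for $m \gg 0$, a Bertini theorem on the smooth fibre $X_y$ (Poonen's version when $\kappa(y)$ is finite), and lifting the resulting divisor; your extra justification that $D \to Y$ is smooth near $y$ (fibral flatness plus openness of smoothness) merely fills in details the paper leaves implicit. One caveat: your proposed alternative for the finite residue field case via a finite \'etale extension of $R$ is not straightforward, since a section constructed after base change need not descend (and taking norms produces a sum of conjugate divisors that is generally not smooth where they meet), so you should rely on Poonen's theorem as the paper does rather than on that descent step.
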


\begin{proof}
Let $m \gg 0$. We see that the map
\[
    \alpha \colon \mH^0(X, \sA^m) \to \mH^0\mleft(\fiber{X}{f}{y}, \sA^m \restrict{\fiber{X}{f}{y}}\mright)
\]
is surjective.

If the residue field $\rkap(y)$ of $y$ is infinite, then by the Bertini theorem, we obtain a divisor $D_y \in \abs*{\sA^m \restrict{\fiber{X}{f}{y}}}$ that is smooth over $\rkap(y)$. If the residue field $\rkap(y)$ is finite, we can take such $D_y$ by Poonen's Bertini theorem \cite{Poon04} and $m \gg 0$.

By the surjectivity of $\alpha$, we get a divisor $D$ satisfying $D \restrict{\fiber{X}{f}{y}} = D_y$, which is the desired divisor, since smoothness is an open condition.
\end{proof}    

\begin{proof}[Proof of \myref{prop: Fano}]
We show that \eqref{enum: fano-nef} implies \eqref{enum: fano-fano}. Assume that $-\dK_X - f^*N$ is ample. Take an ample $\QQ$-divisor $L$ on $Y$ such that $-\dK_X - f^*N - f^*L$ is ample. It follows from \eqref{enum: fano-nef} that $-\dK_Y - N - L$ is nef, and consequently that $-\dK_Y - N$ is ample.

It only remains to show \eqref{enum: fano-nef}. Assume that $-\dK_X - f^*N$ is nef. Suppose by contradiction that $-\dK_Y - N$ is not nef. Then $f^*(-\dK_Y - N)$ is not nef, and hence there exists a closed point $y \in f(\BBm(f^*(-\dK_Y - N)))$. Take an ample divisor $A$ on $X$. For some $0 < \epsilon \ll 1$, we have
\begin{equation}\label{eq: fano-pf}
    \begin{split}
        y &\in f(\BBm(f^*(-\dK_Y - N) + \epsilon A)) \\
        &= f(\BBm(\dK_{X/Y} + (-\dK_X - f^*N + \epsilon A)))\text.
    \end{split}
\end{equation}

Since $-\dK_X - f^*N$ is nef, $-\dK_X - f^*N + \epsilon A$ is ample. From \myref{lem: Bertini-mixed}, it follows that for every $m \gg 0$, there exists a divisor $D$ on $X$ such that
\[
    D \lineq m(-\dK_X - f^*N + \epsilon A)
\]
and $D$ is smooth over a neighborhood $V$ of $y$. Set $\Delta \coloneqq D/m$.
We see that the divisor $\dK_X + \Delta  \Qlineq -f^*N + \epsilon A$ is $f$-ample. Applying \myref{thm: Wp}, we obtain
\[
    f(\BBm(\dK_{X/Y} + \Delta)) \cap V = \emptyset\text,
\]
a contradiction to \myref{eq: fano-pf}.
\end{proof}

\begin{proof}[Another proof of \myref{prop: Fano}]
We prove \myref{prop: Fano} \eqref{enum: fano-nef} by using a similar result in positive characteristic (see, for example, \cite[Corollary 3.15]{Debarre}, \cite{Ejir19}).
Note that \eqref{enum: fano-nef} implies \eqref{enum: fano-fano} as explained above.

It is enough to show that $(-\dK_Y - N) \restrict{C_0}$ is nef for every projective curve $C_0 \subset Y$ over $k \coloneqq R / \maxmR$.
Let $C_1 \coloneqq (C_0)_{\algcl{k}}$ be the base change of $C_0$ to $\algcl{k}$ and $C_2 \coloneqq \scNor{C_1}$ its normalization.
Because nefness can be checked on $C_2$, it is sufficient to prove that $g^*(-\dK_Y - N)$ is nef for every nonconstant morphism $g \colon C \to Y$ from a smooth projective curve $C$ over $\algcl{k}$.
Form the following fiber product diagram:
\begin{DIFnomarkup}
\begin{equation*}
    \begin{tikzcd}
        X_C \ar[r, "g'"] \ar[d, "f'"] & X \ar[d, "f"] \\
        C \ar[r, "g"] & Y
    \end{tikzcd}_{\textstyle.}
\end{equation*}
\end{DIFnomarkup}Note that since $f$ is smooth, $f'$ is also smooth.
Hence, by $(g')^*(\Omega_{X/Y}) \isom \Omega_{X_C/C}$, we have $(g')^*(\dK_{X/Y}) = \dK_{X_C/C}$ (cf.~\cite[Theorem 3.6.1]{Conrad00}). Hence $-(g')^*\dK_X + (f')^*g^*\dK_Y = -\dK_{X_C} + (f')^*\dK_C$.
Since $-\dK_X - f^*N$ is nef by assumption, so is
\[
    (g')^*(-\dK_X - f^*N) = -\dK_{X_C} + (f')^*D\text,
\]
where we set
\[
    D \coloneqq \dK_C + g^*(-\dK_Y - N)\text.
\]
Hence we deduce that $-\dK_C + D = g^*(-\dK_Y - N)$ is also nef by \cite[Corollary 3.15]{Debarre}.
Thus, $-\dK_Y - N$ is nef.
\end{proof}

\begin{corollary}
Suppose that $f \colon X \to Y$ is a smooth surjection between regular integral projective $R$-schemes.
If $-\dK_X$ is nef and big, so is $-\dK_Y$.
\end{corollary}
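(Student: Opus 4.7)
The nefness of $-\dK_Y$ is immediate: apply \myref{thm: Fano}, first bullet, with $N = 0$.

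For bigness, the plan is to adapt the argument of \myref{thm: Fano}. Since $-\dK_X$ is big, the mixed-characteristic Kodaira lemma yields $-\dK_X \Qlineq A + E$ with $A$ an ample $\QQ$-divisor on $X$ and $E \ge 0$. By \myref{lem: Bertini-mixed}, for $m \gg 0$ one finds a smooth effective divisor $D \in \abs{mA}$ over a neighborhood $V$ of any chosen closed point $y \in Y$. Fix an auxiliary ample divisor $H$ on $X$ (with a smooth Bertini representative) and small rationals $0 < \epsilon, \delta \ll 1$; put
\[
    \Delta \coloneqq \frac{\epsilon}{m} D + \epsilon E + \delta H\text.
\]
Then $\dK_X + \Delta \Qlineq (1-\epsilon)\dK_X + \delta H$, which is $f$-ample over $U \coloneqq f^\inv(V)$ provided $H$ is sufficiently ample and $\delta/\epsilon$ exceeds the $f$-ampleness threshold dictated by $-\dK_X\restrict{X_y}$ for $y \in V$ (using that $-\dK_X\restrict{X_y}$ is nef, hence of controlled size against $H\restrict{X_y}$). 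For $\epsilon, \delta$ small enough, $\floor{\Delta\restrict{U}} = 0$; after a log resolution handling the support of $E$ (carried out so as to preserve the nef-and-big property of $-\dK_X$ on the resolution), $\Supp\Delta\restrict{U}$ is smooth over $V$. Then \myref{thm: Wp} gives weak positivity of $\dK_{X/Y} + \Delta$ over $U$, and in particular pseudoeffectivity on $X$.

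The final step --- and the main obstacle --- is to convert this weak positivity into bigness of $-\dK_Y$. Assuming $(-\dK_Y)^n = 0$ for contradiction (with $n = \dim Y$), the plan is to expand $(-\dK_X)^{\dim X}$ using $-\dK_X = -\dK_{X/Y} + f^*(-\dK_Y)$ and compare the resulting expression with the asymptotic behaviour of $\mH^0\bigl(X, m(\dK_{X/Y}+\Delta)\bigr)$: the weak positivity forces nontrivial growth from below, while a projection-formula computation on $Y$ bounds it from above in terms of $(-\dK_Y)^n$. The subtle point is that the naive rearrangement $f^*(-\dK_Y) + \delta H \Qlineq (\dK_{X/Y}+\Delta) + (1-\epsilon)(-\dK_X)$ only witnesses the tautological bigness of nef + ample on the left, so extracting the bigness of $-\dK_Y$ itself will likely need a controlled limit $\epsilon, \delta \to 0$ together with the auxiliary observation that $-\dK_{X_\eta}$ is big on the generic fiber (a fact that must be pinned down from $(-\dK_X)^{\dim X} > 0$).
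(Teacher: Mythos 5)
Your nefness argument is exactly the paper's: apply \myref{thm: Fano} with $N=0$. For bigness, however, your proposal never closes. You yourself flag the final step as "the main obstacle" and describe only a plan (an intersection-number expansion of $(-\dK_X)^{\dim X}$ against a hypothetical $(-\dK_Y)^n=0$), ending with the admission that the naive rearrangement only witnesses the bigness of nef $+$ ample. That is not a proof. The paper's route is entirely different and essentially a one-liner: since bigness of a nef divisor on $Y$ can be tested on the generic fiber of $Y \to \Spec R$, which is a variety in characteristic zero, and $f$ restricts to a smooth surjection there, the bigness of $-\dK_Y$ follows from the characteristic-zero theorem of Fujino--Gongyo \cite{FuGo12} (this is the same reduction the paper spells out in the remark following \myref{prop: Iitaka}). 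Weak positivity of $\dK_{X/Y}+\Delta$ is not needed for this half of the statement.

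Independently of the missing final step, the construction of $\Delta$ in your sketch does not satisfy the hypotheses of \myref{thm: Wp}. With $\Delta = \tfrac{\epsilon}{m}D + \epsilon E + \delta H$ you get $\dK_X+\Delta \Qlineq (1-\epsilon)\dK_X + \delta H$, whose restriction to a fiber $X_y$ is $\delta H\restrict{X_y} - (1-\epsilon)(-\dK_{X_y})$, i.e.\ a small ample class minus a fixed nef class (for $\epsilon \ll 1$); this is not $f$-ample unless $\delta$ is taken large, which is incompatible with $\floor{\Delta\restrict{U}}=0$. In \myref{thm: Fano} the paper avoids this by absorbing \emph{all} of $-\dK_X$ into $\Delta$ via a single Bertini divisor $D \lineq m(-\dK_X + \epsilon A)$ with coefficient $1/m$, so that $\dK_X+\Delta \Qlineq \epsilon A$ is manifestly $f$-ample. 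Finally, the proposed "log resolution handling the support of $E$" would in general destroy both the regularity of $X$, the smoothness of $X \to Y$, and the nefness of $-\dK_{X'}$ (exceptional divisors enter $\dK_{X'}$), so it cannot be used to restore the smoothness hypothesis on $\Supp\Delta\restrict{U} \to V$.
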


\begin{proof}
Assume that $-\dK_X$ is nef and big. Then $-\dK_Y$ is nef by \myref{prop: Fano}. Since bigness can be checked on the generic fiber over $R$, it follows from \cite[Theorem 1.1]{FuGo12} that $-\dK_Y$ is big.
\end{proof}

\bibliographystyle{amsalpha}
\bibliography{myref}

\newcommand{\etalchar}[1]{$^{#1}$}
\providecommand{\bysame}{\leavevmode\hbox to3em{\hrulefill}\thinspace}
\providecommand{\MR}{\relax\ifhmode\unskip\space\fi MR }
\providecommand{\MRhref}[2]{%
  \href{http://www.ams.org/mathscinet-getitem?mr=#1}{#2}
}
\providecommand{\href}[2]{#2}
\begin{thebibliography}{BMP{\etalchar{+}}25}

\bibitem[And18]{Andr18}
Yves Andr\'{e}, \emph{La conjecture du facteur direct}, Publ. Math. Inst. Hautes \'{E}tudes Sci. \textbf{127} (2018), 71--93. \MR{3814651}

\bibitem[Bha21]{Bhat20}
Bhargav Bhatt, \emph{Cohen-macaulayness of absolute integral closures}, \url{https://arxiv.org/abs/2008.08070v2}, 2021.

\bibitem[BKK{\etalchar{+}}15]{BKKMSU15}
Thomas Bauer, S\'andor~J. Kov\'acs, Alex K\"uronya, Ernesto~C. Mistretta, Tomasz Szemberg, and Stefano Urbinati, \emph{On positivity and base loci of vector bundles}, Eur. J. Math. \textbf{1} (2015), no.~2, 229--249. \MR{3386236}

\bibitem[BMP{\etalchar{+}}23]{BMP+23}
Bhargav Bhatt, Linquan Ma, Zsolt Patakfalvi, Karl Schwede, Kevin Tucker, Joe Waldron, and Jakub Witaszek, \emph{Globally $\pmb{+}$-regular varieties and the minimal model program for threefolds in mixed characteristic}, Publ. Math. Inst. Hautes {\'E}tudes Sci. \textbf{138} (2023), no.~1, 69--227.

\bibitem[BMP{\etalchar{+}}25]{BMP+24}
Bhargav Bhatt, Linquan Ma, Zsolt Patakfalvi, Karl Schwede, Kevin Tucker, Joe Waldron, Jakub Witaszek, and Rankeya Datta, \emph{Test ideals in mixed characteristic: a unified theory up to perturbation}, 2025.

\bibitem[Con00]{Conrad00}
Brian Conrad, \emph{Grothendieck duality and base change}, Lecture Notes in Mathematics, vol. 1750, Springer-Verlag, Berlin, 2000. \MR{1804902}

\bibitem[Deb01]{Debarre}
Olivier Debarre, \emph{Higher-dimensional algebraic geometry}, Universitext, Springer-Verlag, New York, 2001. \MR{1841091}

\bibitem[DM19]{DuMu19}
Yajnaseni Dutta and Takumi Murayama, \emph{Effective generation and twisted weak positivity of direct images}, Algebra Number Theory \textbf{13} (2019), no.~2, 425--454. \MR{3927051}

\bibitem[Eji17]{Ejir17}
Sho Ejiri, \emph{Weak positivity theorem and {F}robenius stable canonical rings of geometric generic fibers}, J. Algebraic Geom. \textbf{26} (2017), no.~4, 691--734. \MR{3683424}

\bibitem[Eji19]{Ejir19}
\bysame, \emph{Positivity of anticanonical divisors and {$F$}-purity of fibers}, Algebra Number Theory \textbf{13} (2019), no.~9, 2057--2080. \MR{4039496}

\bibitem[Eji23a]{Ejir23b}
\bysame, \emph{Notes on direct images of pluricanonical bundles}, Eur. J. Math. \textbf{9} (2023), no.~4, Paper No. 96, 9. \MR{4652922}

\bibitem[Eji23b]{Ejir23a}
\bysame, \emph{Notes on {F}robenius stable direct images}, J. Algebra \textbf{633} (2023), 464--473. \MR{4617999}

\bibitem[Eji24]{Ejir24}
\bysame, \emph{Direct images of pluricanonical bundles and {F}robenius stable canonical rings of fibers}, Algebr. Geom. \textbf{11} (2024), no.~1, 71--110. \MR{4680014}

\bibitem[FG12]{FuGo12}
Osamu Fujino and Yoshinori Gongyo, \emph{On images of weak {F}ano manifolds}, Math. Z. \textbf{270} (2012), no.~1-2, 531--544. \MR{2875847}

\bibitem[FG14]{FuGo14}
\bysame, \emph{On images of weak {F}ano manifolds {II}}, Algebraic and complex geometry, Springer Proc. Math. Stat., vol.~71, Springer, Cham, 2014, pp.~201--207. \MR{3278574}

\bibitem[Fuj23]{Fujn23}
Osamu Fujino, \emph{On mixed-$\omega$-sheaves}, 2023, \url{https://arxiv.org/abs/1908.00171v3}.

\bibitem[Gro61]{EGAIII}
A.~Grothendieck, \emph{\'{E}l\'{e}ments de g\'{e}om\'{e}trie alg\'{e}brique. {III}. \'{E}tude cohomologique des faisceaux coh\'{e}rents. {I}.}, Inst. Hautes \'{E}tudes Sci. Publ. Math. (1961), no.~11, 167. \MR{217085}

\bibitem[Har03]{HarN:2003}
Nobuo Hara, \emph{A characteristic {$p$} analog of multiplier ideals and its applications}, \url{http://hdl.handle.net/2433/214783}, 2003, pp.~49--57.

\bibitem[HLS24]{HLS21}
Christopher Hacon, Alicia Lamarche, and Karl Schwede, \emph{Global generation of test ideals in mixed characteristic and applications}, Algebr. Geom. \textbf{11} (2024), no.~5, 676--711. \MR{4791070}

\bibitem[HX15]{HaXu15}
Christopher~D. Hacon and Chenyang Xu, \emph{On the three dimensional minimal model program in positive characteristic}, J. Amer. Math. Soc. \textbf{28} (2015), no.~3, 711--744. \MR{3327534}

\bibitem[Kee03]{Keer03}
Dennis~S. Keeler, \emph{Ample filters of invertible sheaves}, J. Algebra \textbf{259} (2003), no.~1, 243--283. \MR{1953719}

\bibitem[Kee08]{Keer08}
\bysame, \emph{Fujita's conjecture and {F}robenius amplitude}, Amer. J. Math. \textbf{130} (2008), no.~5, 1327--1336. \MR{2450210}

\bibitem[KM98]{KoMo98}
J\'anos Koll\'ar and Shigefumi Mori, \emph{Birational geometry of algebraic varieties}, Cambridge Tracts in Mathematics, vol. 134, Cambridge University Press, Cambridge, 1998, With the collaboration of C. H. Clemens and A. Corti, Translated from the 1998 Japanese original. \MR{1658959}

\bibitem[KMM92]{KMM92}
J\'{a}nos Koll\'{a}r, Yoichi Miyaoka, and Shigefumi Mori, \emph{Rational connectedness and boundedness of {F}ano manifolds}, J. Differential Geom. \textbf{36} (1992), no.~3, 765--779. \MR{1189503}

\bibitem[Kol86]{Koll86I}
J\'{a}nos Koll\'{a}r, \emph{Higher direct images of dualizing sheaves. {I}}, Ann. of Math. (2) \textbf{123} (1986), no.~1, 11--42. \MR{825838}

\bibitem[KP17]{KoPa17}
S\'{a}ndor~J. Kov\'{a}cs and Zsolt Patakfalvi, \emph{Projectivity of the moduli space of stable log-varieties and subadditivity of log-{K}odaira dimension}, J. Amer. Math. Soc. \textbf{30} (2017), no.~4, 959--1021. \MR{3671934}

\bibitem[Laz04a]{LazI}
Robert Lazarsfeld, \emph{Positivity in algebraic geometry. {I}}, Ergebnisse der Mathematik und ihrer Grenzgebiete. 3. Folge. A Series of Modern Surveys in Mathematics [Results in Mathematics and Related Areas. 3rd Series. A Series of Modern Surveys in Mathematics], vol.~48, Springer-Verlag, Berlin, 2004, Classical setting: line bundles and linear series. \MR{2095471}

\bibitem[Laz04b]{LazII}
\bysame, \emph{Positivity in algebraic geometry. {II}}, Ergebnisse der Mathematik und ihrer Grenzgebiete. 3. Folge. A Series of Modern Surveys in Mathematics [Results in Mathematics and Related Areas. 3rd Series. A Series of Modern Surveys in Mathematics], vol.~49, Springer-Verlag, Berlin, 2004, Positivity for vector bundles, and multiplier ideals. \MR{2095472}

\bibitem[Liu02]{Liu02}
Qing Liu, \emph{Algebraic geometry and arithmetic curves}, Oxford Graduate Texts in Mathematics, vol.~6, Oxford University Press, Oxford, 2002, Translated from the French by Reinie Ern\'e, Oxford Science Publications. \MR{1917232}

\bibitem[MB81]{More81}
Laurent Moret-Bailly, \emph{Familles de courbes et de vari\'{e}t\'{e}s ab\'{e}liennes sur {${\mathbb P}^1$}. {II}. {E}xemples}, S\'{e}minaire sur les {P}inceaux de {C}ourbes de {G}enre au {M}oins {D}eux, no.~86, Soci\'{e}t\'{e} Math\'{e}matique de France, Paris, 1981, Seminar on Pencils of Curves of Genus at Least Two, pp.~125--140. \MR{3618576}

\bibitem[MS21]{MaSc21}
Linquan Ma and Karl Schwede, \emph{Singularities in mixed characteristic via perfectoid big {C}ohen-{M}acaulay algebras}, Duke Math. J. \textbf{170} (2021), no.~13, 2815--2890. \MR{4312190}

\bibitem[Pat14]{Pat14}
Zsolt Patakfalvi, \emph{Semi-positivity in positive characteristics}, Ann. Sci. \'{E}c. Norm. Sup\'{e}r. (4) \textbf{47} (2014), no.~5, 991--1025. \MR{3294622}

\bibitem[Poo04]{Poon04}
Bjorn Poonen, \emph{Bertini theorems over finite fields}, Ann. of Math. (2) \textbf{160} (2004), no.~3, 1099--1127. \MR{2144974}

\bibitem[PS14]{PoSc14}
Mihnea Popa and Christian Schnell, \emph{On direct images of pluricanonical bundles}, Algebra Number Theory \textbf{8} (2014), no.~9, 2273--2295. \MR{3294390}

\bibitem[Smi97]{Smit97}
Karen~E. Smith, \emph{Fujita's freeness conjecture in terms of local cohomology}, J. Algebraic Geom. \textbf{6} (1997), no.~3, 417--429. \MR{1487221}

\bibitem[ST14]{ScTu14}
Karl Schwede and Kevin Tucker, \emph{Test ideals of non-principal ideals: computations, jumping numbers, alterations and division theorems}, J. Math. Pures Appl. (9) \textbf{102} (2014), no.~5, 891--929. \MR{3271293}

\bibitem[{Sta}]{Stacks}
The {Stacks project authors}, \emph{The {S}tacks project}, \url{https://stacks.math.columbia.edu}.

\bibitem[SZ20]{ShZh20}
Junchao Shentu and Yongming Zhang, \emph{On the simultaneous generation of jets of the adjoint bundles}, J. Algebra \textbf{555} (2020), 52--68. \MR{4081496}

\bibitem[TY23]{TaYo23}
Teppei Takamatsu and Shou Yoshikawa, \emph{Minimal model program for semi-stable threefolds in mixed characteristic}, J. Algebraic Geom. \textbf{32} (2023), no.~3, 429--476. \MR{4622257}

\bibitem[Vie83]{Vieh83}
Eckart Viehweg, \emph{Weak positivity and the additivity of the {K}odaira dimension for certain fibre spaces}, Algebraic varieties and analytic varieties ({T}okyo, 1981), Adv. Stud. Pure Math., vol.~1, North-Holland, Amsterdam, 1983, pp.~329--353. \MR{715656}

\end{thebibliography}

\end{document}